\theoremstyle{plain}
\newtheorem{theorem}{Theorem}[section]
\newtheorem{lemma}[theorem]{Lemma}
\newtheorem{corollary}[theorem]{Corollary}
\newtheorem{proposition}[theorem]{Proposition}
\theoremstyle{definition}
\newtheorem{remark}[theorem]{Remark}
\newtheorem{example}[theorem]{Example}
\newtheorem{definition}[theorem]{Definition}
\newtheorem*{notation}{Notation}
\newcommand{\compp}{\mathbb{C}}
\newcommand{\Ee}{\mathbb{E}}
\newcommand{\nat}{\mathbb{N}}
\newcommand{\real}{\mathbb{R}}
\newcommand{\Pp}{\mathbb{P}}
\newcommand{\I}{\mathbh{1}}
\newcommand{\G}{\mathbb{G}}
\newcommand{\rdohne}{{\mathbb{R}^d\setminus\{0\}}}
\newcommand{\rmohne}{{\mathbb{R}^m\setminus\{0\}}}
\newcommand{\rnohne}{{\mathbb{R}^n\setminus\{0\}}}
\newcommand{\rd}{{\mathbb{R}^d}}
\newcommand{\VN}{\mbox{}^{\scriptscriptstyle N}\kern-1.5pt{V}}
\newcommand{\FN}{\mbox{}^{\scriptscriptstyle N}\kern-1.5pt{F}}
\newcommand{\RN}{\mbox{}^{\scriptscriptstyle N}\kern-2pt{R}}
\newcommand{\dd}{\mathrm{d}}
\newcommand{\ddr}{\mathrm{d}r}
\newcommand{\dds}{\mathrm{d}s}
\newcommand{\ddt}{\mathrm{d}t}
\newcommand{\ddu}{\mathrm{d}u}
\newcommand{\ddx}{\mathrm{d}x}
\newcommand{\ddy}{\mathrm{d}y}
\newcommand{\scalp}[2]{#1\cdot#2} 
\newcommand{\et}{\quad\text{and}\quad}
\newcommand{\law}{\mathcal{L}}
\newcommand{\Cskript}{\mathcal{C}}
\newcommand\doverline[1]{\overline{\overline{#1}}}
\def\arxivhref#1{\href{https://arxiv.org/abs/#1}{#1}}
\newcommand{\ii}{\mathrm{i}}
\newcommand{\ee}{\mathrm{e}}
\renewcommand{\Re}{\mathop{\mathrm{Re}}}
\newcommand{\rrVert}{\Vert}
\newcommand{\llVert}{\Vert}
\newcommand{\rrvert}{\vert}
\newcommand{\llvert}{\vert}
\DeclareMathOperator{\Cov}{Cov}
\DeclareMathOperator{\Cor}{Cor}
\DeclareMathOperator{\trace}{trace}
\begin{document}

\begin{frontmatter}
\pretitle{Research Article}

\title{Detecting independence of random vectors: generalized distance covariance and Gaussian covariance}

\author{\inits{B.}\fnms{Bj\"orn}~\snm{B\"ottcher}\thanksref{cor1}\ead[label=e1]{bjoern.boettcher@tu-dresden.de}}
\author{\inits{M.}\fnms{Martin}~\snm{Keller-Ressel}\ead[label=e2]{martin.keller-ressel@tu-dresden.de}}
\author{\inits{R.L.}\fnms{Ren\'e~L.}~\snm{Schilling}\ead[label=e3]{rene.schilling@tu-dresden.de}}
\thankstext[type=corresp,id=cor1]{Corresponding author.}
\address{\institution{TU Dresden}, Fakult\"at Mathematik, Institut f\"{u}r Mathematische Stochastik, 01062 Dresden, \cny{Germany}}



\markboth{B. B\"ottcher et al.}{Generalized distance covariance and Gaussian covariance}

\begin{abstract}
Distance covariance is a quantity to measure the dependence of two random
vectors. We show that the original concept introduced and developed by
Sz\'{e}kely, Rizzo and Bakirov can be embedded into a more general
framework based on symmetric L\'evy measures and the corresponding
real-valued continuous negative definite functions. The L\'evy measures
replace the weight functions used in the original definition of distance
covariance. All essential properties of distance covariance are preserved
in this new framework.

From a practical point of view this allows less restrictive moment
conditions on the underlying random variables and one can use other
distance functions than Euclidean distance, e.g. Minkowski distance. Most
importantly, it serves as the basic building block for distance
multivariance, a quantity to measure and estimate dependence of multiple
random vectors, which is introduced in a follow-up paper [Distance Multivariance:
New dependence measures for random vectors (submitted).
Revised version of arXiv: \arxivhref{1711.07775v1}] 
to the present article.
\end{abstract}
\begin{keywords}
\kwd{Dependence measure}
\kwd{stochastic independence}
\kwd{negative definite function}
\kwd{characteristic function}
\kwd{distance covariance}
\kwd{Gaussian random field}
\end{keywords}
\begin{keywords}[MSC2010]%
\kwd[Primary ]{62H20} \kwd[; Secondary ]{60E10} \kwd{62G20} \kwd{60G15}
\end{keywords}

\received{\sday{29} \smonth{6} \syear{2018}}
\revised{\sday{23} \smonth{8} \syear{2018}}
\accepted{\sday{30} \smonth{8} \syear{2018}}
\publishedonline{\sday{19} \smonth{9} \syear{2018}}

\end{frontmatter}

\section{Introduction}
The concept of distance covariance was introduced by Sz\'ekely, Rizzo and
Bakirov \cite{SzekRizzBaki2007} as a measure of dependence between two random
vectors of arbitrary dimensions. Their starting point is to consider a
weighted $L^2$-integral of the difference of the (joint) characteristic
functions $f_X, f_Y$ and $f_{(X,Y)}$ of the ($\real^m$- and $\real^n$-valued)
random variables $X$, $Y$ and $(X,Y)$,
\begin{equation}
\label{eq:Vrep_SR} \mathcal V^2(X,Y;w) = \iint_{\real^{m+n}}
|f_{(X,Y)}(s,t)-f_X(s)f_Y(t)|^2
\,w(s,t)\,\dds\,\ddt.
\end{equation}
The weight
$w$ is given by $w(s,t) := c_{\alpha,m}|s|^{-m-\alpha} c_{\alpha,n}
|t|^{-n-\alpha}$ for $\alpha\in (0,2)$.

We are going to embed this into a more general framework. In order to
illustrate the new features of our approach we need to recall some results on
distance covariance. Among several other interesting properties,
\cite{SzekRizzBaki2007} shows that distance covariance characterizes
independence, in the sense that $V^2(X,Y;w) = 0$ if, and only if, $X$ and $Y$
are independent. Moreover, they show that in the case $\alpha = 1$ the
distance covariance $\VN^2(X,Y;w)$ of the empirical distributions of two
samples $(x_1, x_2, \dots, x_N)$ and $(y_1, y_2, \dots, y_N)$ takes a
surprisingly simple form. It can be represented as
\begin{equation}
\label{eq:Vrep_SR2} \VN^2(X,Y;w) = \frac{1}{N^2} \sum
_{k,l = 1}^N A_{kl} \xch{B_{kl},}{B_{kl}}
\end{equation}
where $A$ and $B$ are double centrings (cf. Lemma~\ref{est-05}) of the
Euclidean distance matrices of the samples, i.e. of $(|x_k -
x_l|)_{k,l=1,\ldots,N}$ and $(|y_k - y_l|)_{k,l=1,\ldots,N}$.
 If \mbox{$\alpha \neq 1$} then Euclidean distance has to be replaced by its power with exponent $\alpha$. The connection between the weight function $w$ in \eqref{eq:Vrep_SR} and the (centred) Euclidean distance matrices in \eqref{eq:Vrep_SR2} is given by the L\'evy--Khintchine representation of negative definite functions, i.e.
\begin{gather*}
|x|^\alpha = c_p \int_{\rmohne} (1-\cos
\scalp{s} {x})\,\frac{\dds}{|s|^{m+\alpha}},\quad x\in\real^m,
\end{gather*}
where $c_p$ is a suitable constant, cf. Section~\ref{ar-cndf},
Table~\ref{fig-cndf}. Finally, the representation \eqref{eq:Vrep_SR2} of
$\VN^2(X,Y;w)$ and its asymptotic properties as $N \to \infty$ are used by
Sz\'ekely, Rizzo and Bakirov to develop a statistical test for independence
in \cite{SzekRizzBaki2007}.

Yet another interesting representation of distance covariance is given in the
follow-up paper \cite{SzekRizz2009}:  Let $(X_{\text{cop}},Y_{\text{cop}})$
be an independent copy of $(X,Y)$ and let $W$ and $W'$ be Brownian random
fields on $\real^m$ and $\real^n$, independent from each other and from
$X,Y,X_{\text{cop}},Y_{\text{cop}}$. The paper \cite{SzekRizz2009} defines the
\emph{Brownian covariance}
\begin{equation}
\label{eq:Vrep_SR3} \mathcal W^2(X,Y) = \Ee \bigl[X^W
X^W_{\text{cop}} Y^{W'} Y^{W'}_{\text{cop}}
\bigr],
\end{equation}
where $X^W := W(X) - \Ee [W(X) \mid W ]$ for any random variable $X$
and random field $W$ with matching dimensions. Surprisingly, as shown in
\cite{SzekRizz2009}, Brownian covariance coincides with distance covariance,
i.e. $\mathcal W^2(X,Y) = \mathcal V^2(X,Y;w)$ when $\alpha = 1$ is chosen
for the kernel $w$.

The paper \cite{SzekRizz2009} was accompanied by a series of discussion
papers
\cite{Newt2009,BickXu2009,Koso2009,Cope2009,Feue2009,GretFukuSrip2009,Remi2009,Geno2009,SzekRizz2009a}
where various extensions, applications and open questions were suggested. Let
us highlight the three problems which we are going to address:
\begin{enumerate}
\item
    Can the weight function $w$ in \eqref{eq:Vrep_SR} be replaced by other
    weight functions?\break (Cf.~\cite{Feue2009,GretFukuSrip2009})
\item
    Can the Euclidean distance (or its $\alpha$-power) in
    \eqref{eq:Vrep_SR2} be replaced by other distances?
    (Cf.~\cite{Koso2009,GretFukuSrip2009,Lyon2013})
\item
    Can the Brownian random fields $W, W'$ in \eqref{eq:Vrep_SR3} be
    replaced by other random fields? (Cf.~\cite{Koso2009,Remi2009})
\end{enumerate}
While insights and partial results on these questions can be found in all of
the mentioned discussion papers, a definitive and unifying answer was missing
for a long time. In the present paper we propose a generalization of distance
covariance which resolves these closely related questions. In a follow-up
paper \cite{part2} we extend our results to the detection of independence of
$d$ random variables $(X^1, X^2, \dots, X^d)$, answering a question of
\cite{Feue2009,BakiSzek2011}.

More precisely, we introduce in Definition~\ref{gdc-05} the
\emph{generalized} distance covariance
\begin{equation*}
V^2(X,Y) = \int_{\real^n}\int_{\real^m}
|f_{(X,Y)}(s,t)-f_X(s)f_Y(t)|^2 \,
\mu(\dds)\,\xch{\nu(\ddt),}{\nu(\ddt)}
\end{equation*}
where $\mu$ and $\nu$ are symmetric L\'evy measures, as a natural extension
of distance covariance of Sz\'ekely et al. \cite{SzekRizzBaki2007}. The
L\'evy measures $\mu$ and $\nu$ are linked to negative definite functions
$\varPhi$ and $\varPsi$ by the well-known L\'evy--Khintchine representation,
cf. Section~\ref{ar} where examples and important properties of negative
definite functions are discussed. In Section~\ref{sec:gdc} we show that
several different representations (related to \cite{Lyon2013}) of $V^2(X,Y)$
in terms of the functions $\varPhi$ and $\varPsi$ can be given. In
Section~\ref{sec:estimation} we turn to the finite-sample properties of
generalized distance covariance and show that the representation
\eqref{eq:Vrep_SR2} of $\VN^2(X,Y)$ remains valid, with the Euclidean
distance matrices replaced by the matrices
\begin{gather*}
\bigl(\varPhi(x_k - x_l) \bigr)_{k,l = 1,\ldots,N}
\quad \text{and} \quad \bigl(\varPsi(y_k - y_l) \bigr)_{k,l = 1,\ldots,N}.
\end{gather*}
We also show asymptotic properties of $\VN^2(X,Y)$ as $N$ tends to infinity,
paralleling those of \cite{SzekRizz2009,SzekRizz2012} for Euclidean distance
covariance. After some remarks on uniqueness and normalization, we show in
Section~\ref{gauss} that the representation \eqref{eq:Vrep_SR3} remains also
valid, when the Brownian random fields $W$ and $W'$ are replaced by centered
Gaussian random fields $G_\varPhi$ and $G_\varPsi$ with covariance kernel
\begin{gather*}
\Ee \bigl[G_\varPhi(x)G_\varPhi\bigl(x'\bigr)
\bigr] = \varPhi(x) + \varPhi\bigl(x'\bigr) - \varPhi\bigl(x -
x'\bigr)
\end{gather*}
and analogously for $G_\varPsi$.

To use generalized distance covariance (and distance multivariance) in
applications all necessary functions and tests are provided in the R package
\texttt{multivariance} \cite{Boett2017R-1.0.5}. Extensive examples and
simulations can be found in \cite{Boet2017}, therefore we concentrate in the
current paper on the theoretical foundations.

\begin{notation}
Most of our notation is standard or self-explanatory.
Throughout we use positive (and negative) in the non-strict sense, i.e. $x\geq 0$ (resp. $x\leq 0$) and we write $a\vee b = \max\{a,b\}$ and
$a\wedge b = \min\{a,b\}$ for the maximum and minimum. For a vector $x\in
\real^d$ the Euclidean norm is denoted by $|x|$.
\end{notation}

\section{Fundamental results}\label{ar}

In this section we collect some tools and concepts which will be needed in
the sequel.

\subsection{Negative definite functions}\label{ar-cndf}

A function $\varTheta : \real^d \to \compp$ is called \emph{negative
definite} (in the sense of Schoenberg) if the matrix
$ (\varTheta(x_i)+\overline{\varTheta(x_j)} -
\varTheta(x_i-x_j) )_{i,j}\in\compp^{m\times m}$ is positive semidefinite hermitian for every $m\in\nat$
and $x_1,\dots,x_m\in\real^d$. It is not
hard to see, cf. Berg \& Forst \cite{BergFors75} or Jacob \cite{Jaco2001},
that this is equivalent to saying that $\varTheta(0)\geq 0$, $\varTheta(-x) =
\overline{\varTheta(x)}$ and the matrix
$ (-\varTheta(x_i-x_j) )_{i,j}\in\compp^{m\times m}$ is
conditionally positive definite, i.e.
\begin{gather*}
\sum_{i,j=1}^m \bigl[-
\varTheta(x_i-x_j)\bigr]\lambda_i\bar
\lambda_j\geq 0 \quad\forall \lambda_1,\dots,
\lambda_m\in\compp\text{\ such that\ } \sum
_{k=1}^m \lambda_k = 0.
\end{gather*}
Because of this equivalence, the function $-\varTheta$ is also called
\emph{conditionally positive definite} (and some authors call $\varTheta$
\emph{conditionally negative definite}).

Negative definite functions appear naturally in several contexts, for
instance in probability theory as characteristic exponents (i.e. logarithms
of characteristic functions) of infinitely divisible laws or L\'evy
processes, cf. Sato~\cite{sato} or \cite{BoetSchiWang2013}, in harmonic
analysis in connection with non-local operators,  cf. Berg \&
Forst~\cite{BergFors75} or Jacob~\cite{Jaco2001} and in geometry when it
comes to characterize certain metrics in Euclidean spaces, cf. Benyamini \&
Lindenstrauss~\cite{BenLin}.

The following theorem, compiled from Berg \& Forst \cite[Sec.~7, pp.~39--48,\break
Thm.~10.8, p.~75]{BergFors75} and Jacob \cite[Sec.~3.6--7,
pp.~120--155]{Jaco2001}, summarizes some basic equivalences and connections.
\begin{theorem}\label{ar-03}
    For a function $\varTheta:\real^d\to\compp$ with $\varTheta(0)=0$ the following assertions are equivalent
\begin{enumerate}
    \item\label{ar-03-a} $\varTheta$ is negative definite.
    \item\label{ar-03-b} $-\varTheta$ is conditionally positive definite.
    \item\label{ar-03-c} $\ee^{-t\varTheta}$ is positive definite for every
        $t>0$.
    \item\label{ar-03-d} $t^{-1}(1- \ee^{-t\varTheta})$ is negative
        definite for every $t>0$.
\end{enumerate}
If $\varTheta$ is continuous, the assertions \ref{ar-03-a}--\ref{ar-03-d} are also equivalent to
\begin{enumerate}\setcounter{enumi}{4}
    \item\label{ar-03-e} $\varTheta$ has the following integral
        representation
\begin{align}
\varTheta(x) &= \ii\scalp lx + \frac{1}2\scalp{x} {Qx} \nonumber\\
&\quad +
\int_{\rdohne} \bigl(1-\ee^{\ii\scalp{x}{r}} + \ii\scalp{x} {r}
\I_{(0,1)}(|r|) \bigr)\,\xch{\rho(\ddr),}{\rho(\ddr)}\label{eq:lkf}
\end{align}
        where $l\in\real^d$, $Q\in\real^{d\times d}$ is symmetric and
        positive semidefinite and $\rho$ is a measure on $\rdohne$ such
        that $\int_{\rdohne}  (1\wedge
        |s|^2 )\,\rho(\dds)<\infty$.
    \end{enumerate}
\end{theorem}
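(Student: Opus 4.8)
The plan is to close the cycle among \ref{ar-03-a}--\ref{ar-03-d} by elementary linear algebra, and then, under continuity, to identify \ref{ar-03-e} with \ref{ar-03-c} through Bochner's theorem and the classical Lévy--Khintchine representation of infinitely divisible laws. I would start with \ref{ar-03-a} $\Leftrightarrow$ \ref{ar-03-b}, the algebraic statement recalled above the theorem, using the ``adjoin the origin'' trick: for \ref{ar-03-a} $\Rightarrow$ \ref{ar-03-b}, expanding $\sum_{i,j}(\varTheta(x_i)+\overline{\varTheta(x_j)}-\varTheta(x_i-x_j))\lambda_i\bar\lambda_j$ for $\lambda$ with $\sum_k\lambda_k=0$ kills the two single-index sums and leaves $\sum_{i,j}(-\varTheta(x_i-x_j))\lambda_i\bar\lambda_j\geq0$; for the converse one appends $x_0:=0$, puts $\lambda_0:=-\sum_{k\geq1}\lambda_k$, and feeds $(\lambda_0,\dots,\lambda_m)$ into conditional positive definiteness of $(-\varTheta(x_i-x_j))_{i,j\geq0}$, which, using $\varTheta(0)=0$ and the Hermitian symmetry $\varTheta(-x)=\overline{\varTheta(x)}$, reproduces the defining matrix inequality.

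Next I would handle the block \ref{ar-03-b} $\Leftrightarrow$ \ref{ar-03-c} $\Rightarrow$ \ref{ar-03-d} $\Rightarrow$ \ref{ar-03-a}. The implication \ref{ar-03-c} $\Rightarrow$ \ref{ar-03-b} comes from subtracting $0=\sum_{i,j}\lambda_i\bar\lambda_j$ from $\sum_{i,j}\ee^{-t\varTheta(x_i-x_j)}\lambda_i\bar\lambda_j\geq0$ (when $\sum_k\lambda_k=0$), dividing by $t$, and letting $t\downarrow0$. For \ref{ar-03-b} $\Rightarrow$ \ref{ar-03-c}, fix $x_1,\dots,x_m$ (adjoining the origin if needed), put $A=(-t\varTheta(x_i-x_j))_{i,j}$ — a Hermitian, conditionally positive semidefinite matrix — and observe that the centred matrix $C_{ij}:=A_{ij}-A_{i0}-A_{0j}+A_{00}$ is genuinely positive semidefinite; by Hermiticity the entrywise exponential factors as $\ee^{A_{ij}}=d_i\bar d_j\,\ee^{C_{ij}}$ with $d_i=\ee^{A_{i0}-A_{00}/2}$, and since each Hadamard power of $C$ is positive semidefinite (Schur product theorem), the entrywise exponential of $C$ and then the rank-one rescaling are positive semidefinite, so $(\ee^{-t\varTheta(x_i-x_j)})_{i,j}$ is positive semidefinite. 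For \ref{ar-03-c} $\Rightarrow$ \ref{ar-03-d} I would expand $\ee^{-s t^{-1}(1-\ee^{-t\varTheta(x)})}=\ee^{-s/t}\sum_{k\geq0}\frac{(s/t)^k}{k!}\,\ee^{-kt\varTheta(x)}$; each $\ee^{-kt\varTheta}$ is positive definite by \ref{ar-03-c}, hence so is the positive-coefficient sum, and applying \ref{ar-03-c} $\Rightarrow$ \ref{ar-03-b} $\Leftrightarrow$ \ref{ar-03-a} to the function $t^{-1}(1-\ee^{-t\varTheta})$ (which vanishes at $0$) gives \ref{ar-03-d}. Finally $t^{-1}(1-\ee^{-t\varTheta(x)})\to\varTheta(x)$ pointwise as $t\downarrow0$, and pointwise limits of negative definite functions are negative definite, so \ref{ar-03-d} $\Rightarrow$ \ref{ar-03-a}, closing the cycle.

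For the continuous case, \ref{ar-03-e} $\Rightarrow$ \ref{ar-03-a} is a term-by-term check: $\ii\scalp{l}{x}$ and $\ii\scalp{x}{r}\I_{(0,1)}(|r|)$ give the zero matrix, $\frac12\scalp{x}{Qx}$ gives $(\scalp{x_i}{Qx_j})_{i,j}$, positive semidefinite because $Q$ is, and $1-\ee^{\ii\scalp{x}{r}}$ is negative definite for each fixed $r$ (expand $\ee^{-t(1-\ee^{\ii\scalp{x}{r}})}$ into a positive-coefficient series of the positive definite functions $x\mapsto\ee^{\ii k\scalp{x}{r}}$ and invoke \ref{ar-03-c} $\Rightarrow$ \ref{ar-03-a}); the bound $|1-\ee^{\ii\scalp{x}{r}}+\ii\scalp{x}{r}\I_{(0,1)}(|r|)|\leq C(|x|)\,(1\wedge|r|^2)$ together with $\int_{\rdohne}(1\wedge|s|^2)\,\rho(\dds)<\infty$ lets me integrate and pass to the limit, and dominated convergence yields continuity. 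The substantive direction is \ref{ar-03-c} $\Rightarrow$ \ref{ar-03-e}: by continuity and $\ee^{-t\varTheta(0)}=1$, Bochner's theorem gives probability measures $\mu_t$ with $\widehat{\mu_t}=\ee^{-t\varTheta}$, and $\ee^{-(s+t)\varTheta}=\ee^{-s\varTheta}\ee^{-t\varTheta}$ forces $\mu_{s+t}=\mu_s*\mu_t$, so $(\mu_t)_{t\geq0}$ is a convolution semigroup and $\mu_1$ is infinitely divisible; the classical Lévy--Khintchine theorem (Sato \cite{sato}, Berg \& Forst \cite{BergFors75}) then gives $\widehat{\mu_1}=\ee^{-\psi}$ with $\psi$ of the form \eqref{eq:lkf}, whence $\ee^{-t\varTheta}=\widehat{\mu_t}=\ee^{-t\psi}$ and $\varTheta=\psi$.

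\textbf{Expected main obstacle.} All of \ref{ar-03-a}--\ref{ar-03-d} is routine linear algebra plus the Schur product theorem, and \ref{ar-03-e} $\Rightarrow$ \ref{ar-03-a} is a direct verification. The real work is \ref{ar-03-c} $\Rightarrow$ \ref{ar-03-e}, i.e. extracting the triplet $(l,Q,\rho)$ from the convolution semigroup $(\mu_t)$. If one prefers a self-contained argument to citing the classical theorem, the crux is to show that $\rho$ arises as the vague limit of $t^{-1}\mu_t$ on $\rdohne$, that this limit is a genuine Lévy measure with $\int(1\wedge|s|^2)\,\rho(\dds)<\infty$, and that $Q$ and $l$ are recovered from a careful second-order expansion of $t^{-1}(1-\ee^{-t\varTheta})$ near the origin; the tightness and compactness estimates behind these limits are the genuinely technical part.
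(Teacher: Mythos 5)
Your proposal is correct, but note that the paper offers no proof of this theorem at all: it is explicitly ``compiled from'' Berg \& Forst and Jacob, and your argument is precisely the classical one found in those references (Schoenberg's trick of adjoining the origin, the Schur product theorem for \ref{ar-03-b}~$\Rightarrow$~\ref{ar-03-c}, the limit $t^{-1}(1-\ee^{-t\varTheta})\to\varTheta$, and Bochner plus the convolution semigroup $(\mu_t)$ for the L\'evy--Khintchine part). The only points deserving a word of care are that the Hermitian symmetry $\varTheta(-x)=\overline{\varTheta(x)}$ must be carried along as part of assertion \ref{ar-03-b} (as the paper's preamble does), and that the final identification $\varTheta=\psi$ from $\ee^{-\varTheta}=\ee^{-\psi}$ uses continuity and $\varTheta(0)=\psi(0)=0$ to rule out a $2\pi\ii\mathbb{Z}$-valued discrepancy; both are handled or immediately handleable in your sketch.
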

We will frequently use the abbreviation \emph{cndf} instead of
\emph{continuous negative definite function}. The representation
\eqref{eq:lkf} is the \emph{L\'evy--Khintchine formula} and any measure
$\rho$ satisfying
\begin{equation}
\label{ar-e08} \rho\text{\ \ is a measure on $\rdohne$ such that\ \ } \int
_{\rdohne} \bigl(1\wedge |r|^2 \bigr)\,\rho(\ddr)<
\infty
\end{equation}
is commonly called \emph{L\'evy measure}. To keep notation simple, we will
write $\int \cdots\rho(\ddr)$ or $\int_{\real^d}\cdots\rho(\ddr)$ instead of
the more precise $\int_{\real^d\setminus\{0\}}\cdots\rho(\ddr)$.

\begin{table}[t!]
\begin{center}
\renewcommand{\arraystretch}{1.8}
\caption{Some real-valued continuous negative definite functions (cndfs) on $\real^d$ and the corresponding L\'evy measures and infinitely divisible distributions (IDD)}
\label{fig-cndf}
\begin{tabular}{p{.3\linewidth}|c|c}
	cndf $\Theta(x)$
&            L\'evy measure  $\rho(\ddr)$
&                      IDD
\\ \hline
	$\int \left(1-\cos \scalp xr\right)\, \rho(\ddr)$
&               $\rho$ finite measure
&             compound Poisson (CP)
\\
	$\frac{|x|^2}{\lambda^2+|x|^2}$, $\lambda>0$, $x\in\real$
&   $2\lambda^{-1} \ee^{-\lambda|r|}\,\ddr$
&   CP with exponential (1-d)
\\
	$\frac{|x|^2}{\lambda^2+|x|^2}$, $\lambda>0$, $x\in\real^d$
&   $\int_0^\infty \ee^{\lambda^2 u-\frac{|r|^2}{4u}}\,\frac{\lambda^2\,\ddu}{(4\pi u)^{d/2}}\,\ddr$
&    CP, cf.~\cite[Lem.~6.1]{SchiSchn09}
\\
	$1-\ee^{-\frac{1}{2}|x|^2}$
&   $(2\pi)^{-d/2} \ee^{-\frac{1}{2} |r|^2}\,\ddr$
& CP with normal
\\
	$|x|$
&   $\frac{\Gamma\left(\tfrac{1+d}{2}\right)}{\pi^{\frac{1+d}2}}\dfrac{\ddr}{|r|^{d+1}}$
&   Cauchy
\\
	$\frac{1}{2}|x|^2$
&   no L\'evy measure
&   normal
\\
	$ |x|^\alpha,\ \alpha\in(0,2)$
&   $\frac{\alpha 2^{\alpha-1} \Gamma\left(\tfrac{\alpha+d}{2}\right)}{\pi^{d/2} \Gamma\left(1-\tfrac{\alpha}{2}\right)}\dfrac{\ddr}{|r|^{d+\alpha}}$
&   $\alpha$-stable
\\ $\sqrt[p]{\sum_{k=1}^d |x_k|^p}, p\in [1,2]$
& see Lemma \ref{lem:p-minkowski}
& see Lemma \ref{lem:p-minkowski}
\\
	$\ln(1+\frac{x^2}{2})$, $x\in\real$
&   $\dfrac{1}{|r|}\ee^{-\sqrt{\frac{1}{2}} |r|}\,\ddr$
&   variance Gamma (1-d)
\\
	$\ln \cosh (x)$, $x\in\real$
&   $\dfrac{\ddr}{2r \sinh(\pi r/2)}$
&   Meixner (1-d)
\\
	$|x|^\alpha + |x|^\beta,\ \alpha,\beta\in(0,2)$
&
&               mixture of stable
\\
	\multicolumn{1}{L{60pt}|}{$(1+|x|^\alpha)^{\frac{1}{\beta}} - 1$, $\alpha \in (0,2)$, $\beta \geq \frac{\alpha}{2}$}
&
&              relativistic stable
\end{tabular}
\end{center}
\end{table}

The triplet $(l,Q,\rho)$ uniquely determines $\varTheta$; moreover
$\varTheta$ is real (hence, positive) if, and only if, $l=0$ and $\rho$ is
symmetric, i.e. $\rho(B) = \rho(-B)$ for any Borel set $B\subset\rdohne$. In
this case \eqref{eq:lkf} becomes
\begin{equation}
\label{eq:lkf-sy} \varTheta(x) = \frac{1}2\scalp{x} {Qx} + \int
_{\rd} (1-\cos\scalp{x} {r} )\,\rho(\ddr).
\end{equation}

Using the representation \eqref{eq:lkf} it is straightforward to see that we
have $\sup_x|\varTheta(x)|<\infty$ if $\rho$ is a finite measure, i.e.,
$\rho(\rdohne)<\infty$, and $Q = 0$. The converse is also true, see
\cite[pp.~1390--1391, Lem.~6.2]{SchiSchn09}.

Table~\ref{fig-cndf} contains some examples of continuous negative definite
functions along with the corresponding L\'evy measures and infinitely
divisible laws.

A measure $\rho$ on a topological space $X$ is said to have  \emph{full}
(\emph{topological}) \emph{support}, if $\mu(G)>0$ for any open set $G\subset
X$; for L\'evy measures we have $X = \real^d\setminus\{0\}$.
\begin{lemma}\label{lem:p-minkowski}
    Let $p\in [1,2]$. The Minkowski distance function
\begin{gather*}
\ell_p(x) := \bigl(|x_1|^p + \cdots +
|x_d|^p \bigr)^{1/p},\quad x=(x_1,
\dots,x_d)\in\real^d
\end{gather*}
    is a continuous negative definite function on $\real^d$. If $p\in (1,2]$, the L\'evy measure has full support.
\end{lemma}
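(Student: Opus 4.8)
The plan is to verify that $\ell_p$ is negative definite by exhibiting it (up to a positive multiplicative constant) as the characteristic exponent of a L\'evy process, and then to use the explicit structure of that process to read off a L\'evy measure with full support. The key classical fact is that for $p\in(0,2]$ the one-dimensional function $t\mapsto |t|^p$ is negative definite (it is the characteristic exponent of a symmetric $p$-stable law, see Table~\ref{fig-cndf}), and more generally $x\mapsto \|x\|^p$ for any norm $\|\cdot\|$ that embeds isometrically into $L^p$ is negative definite; for $p\in[1,2]$ the $\ell_p^d$-norm does embed isometrically into $L^p([0,1])$ (a standard result, e.g. Bretagnolle--Dacunha-Castelle--Krivine, or Benyamini \& Lindenstrauss~\cite{BenLin}). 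I would first state this embedding, then recall that $\|x\|^p = |\varphi(x)|_{L^p}^p = \int |\varphi(x)(\omega)|^p\,\dd\omega$ with $\varphi$ linear, and combine it with the one-dimensional stable representation $|t|^p = c_p\int_{\real\setminus\{0\}} (1-\cos(ts))\,|s|^{-1-p}\,\dds$ to obtain
\begin{gather*}
\ell_p(x)^p = c_p \int \bigl(1 - \cos\scalp{\varphi(x)(\omega)}{s}\bigr)\,\frac{\dds}{|s|^{1+p}}\,\dd\omega,
\end{gather*}
which is manifestly of the form \eqref{eq:lkf-sy} after pushing the measure $|s|^{-1-p}\,\dds\otimes\dd\omega$ forward under $s\mapsto s\,\varphi(\cdot)(\omega)$ (a linear functional on $\real^d$, hence a vector). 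This proves $\ell_p^p$ is a cndf; then, since $r\mapsto r^{1/p}$ is a Bernstein function for $p\ge 1$, composition preserves negative definiteness (Theorem~\ref{ar-03}, or the standard subordination argument: $\ee^{-t\Theta^{1/p}}$ is positive definite because $\Theta^{1/p}$ is again a characteristic exponent), so $\ell_p = (\ell_p^p)^{1/p}$ is a cndf. Continuity is clear throughout.

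For the full-support claim when $p\in(1,2]$, I would compute the L\'evy measure of $\ell_p$ more explicitly rather than relying on the abstract embedding. The cleanest route is subordination: write $\ell_p(x) = \psi(x)^{1/p}$ where $\psi(x)=\ell_p(x)^p$ has L\'evy measure $\rho_\psi$ supported on all of $\rdohne$ (it is a superposition over $\omega$ of one-dimensional $p$-stable measures living on the lines $\real\cdot\varphi(\cdot)(\omega)$, and as $\omega$ ranges these directions sweep out a set whose closure is all directions, since $\varphi$ is injective and the construction is "generic" — this needs a short argument). The Bernstein function $r\mapsto r^{1/p}$ corresponds to a $\tfrac1p$-stable subordinator with L\'evy density $\propto u^{-1-1/p}\,\ddu$, and the subordinated process has L\'evy measure
\begin{gather*}
\rho(B) = \int_0^\infty \Pp\bigl(Z_u \in B\bigr)\,\frac{c\,\ddu}{u^{1+1/p}},\qquad B\subset\rdohne,
\end{gather*}
where $(Z_u)$ is the L\'evy process with exponent $\psi$. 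Since $Z_u$ for $u>0$ has a law whose support is all of $\real^d$ (being an integral/superposition of stable marginals in a dense set of directions), the integral is strictly positive for every nonempty open $B$, giving full support.

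The main obstacle I anticipate is the density-of-directions bookkeeping: making precise that the support of $\rho_\psi$ (or of the subordinated measure) is genuinely all of $\rdohne$ and not merely a lower-dimensional "fan" of lines. One has to argue that although each fixed $\omega$ contributes mass only on a single line through the origin, the union over $\omega$ — or, better, the convolution structure inside $\Pp(Z_u\in\cdot)$ for $u>0$ — fills out the whole space. Concretely I would either (i) note that $Z_u$ is a nondegenerate $\real^d$-valued random variable whose characteristic function $\ee^{-u\psi}$ decays in every direction (because $\psi(x)\to\infty$ as $|x|\to\infty$ in every direction, $\ell_p$ being a norm), hence $Z_u$ has a bounded continuous strictly positive density, so $\Pp(Z_u\in B)>0$ for all open $B$; or (ii) invoke the known characterization of supports of infinitely divisible laws. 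Route (i) seems shortest and avoids the geometry of the embedding entirely, so I would organize the final write-up around it. Everything else is routine: continuity, the $\varTheta(0)=0$ normalization, and the reduction $p\mapsto p$-power are all immediate from the tools already assembled in Section~\ref{ar}.
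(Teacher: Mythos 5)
Your overall strategy coincides with the paper's: write $\ell_p^p(x)=\sum_{i=1}^d|x_i|^p$ as a cndf built from one\-/dimensional symmetric $p$-stable L\'evy measures, subordinate by the Bernstein function $\tau\mapsto\tau^{1/p}$, and read off full support of the resulting L\'evy measure $\rho(B)=\int_0^\infty \Pp(Z_u\in B)\,c\,u^{-1-1/p}\,\ddu$ from strict positivity of the law of $Z_u$. The detour through the isometric embedding of $\ell_p^d$ into $L^p$ is unnecessary for this particular norm --- the identity $\ell_p^p(x)=\sum_i|x_i|^p$ already exhibits $\ell_p^p$ as a sum of one\-/dimensional cndfs, and this is all the paper uses; the embedding machinery would only matter if you wanted to treat general $L^p$-embeddable norms.

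There is, however, a genuine gap in the step you say you would build the final write\-/up around. In route (i) you argue that because $\ee^{-u\psi}$ decays (indeed is integrable, since $\psi$ grows like $|x|^p$), the random variable $Z_u$ ``has a bounded continuous \emph{strictly positive} density''. Integrability of the characteristic function yields, via Fourier inversion, a bounded continuous density --- but it says nothing about where that density vanishes (the triangular density $(1-|x|)_+$ has integrable characteristic function and is zero outside $[-1,1]$), so strict positivity does not follow and this is precisely the point on which the full\-/support claim rests. The correct and short argument, which is the one the paper gives, uses the product structure: since $\psi(x)=\sum_i|x_i|^p$, the process $Z_u$ has independent coordinates, each a one\-/dimensional symmetric $p$-stable variable, so its density factorizes as $\prod_{i=1}^d g_u(x_i)$, and each one\-/dimensional symmetric stable density $g_u$ is strictly positive on all of $\real$ (this is the nontrivial input; the paper cites \cite[Thm.~15.10]{sato}). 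Your route (ii), invoking support theorems for infinitely divisible laws, would also close the gap. Note also that the subordination formula for the L\'evy measure acquires an extra drift term $b\,\rho_\psi$ in general; it vanishes for $p\in(1,2)$ because the $1/p$-stable subordinator has no drift, but at $p=1$ the Bernstein function is the identity and one is left exactly with the measure concentrated on the coordinate axes --- which is why full support genuinely fails there, consistent with the statement.
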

It is interesting to note that the Minkowski distances for $p>2$ and $d\geq
2$ are never negative definite functions. This is the consequence of
Schoenberg's problem, cf. Zastavnyi \cite[p.~56, Eq.~(3)]{zast00}.
\begin{proof}[Proof of Lemma~\ref{lem:p-minkowski}]
    Since each $x_i \mapsto |x_i|^p$, with $p\in [1,2]$, is a one-dimensional continuous negative definite function, we can use the formula \eqref{eq:lkf-sy} to see that
\begin{gather*}
\ell_p^p(x) = \begin{cases}
        \displaystyle
        \int_{\real^d} (1-\cos\scalp xr)\,\sum_{i=1}^d \frac{c_p\,\ddx_i}{|x_i|^{1+p}} \otimes\delta_0(\ddx_{(i)}), &\text{if \ } p\in [1,2),\\
        \displaystyle
        \scalp{x}{x}, &\text{if \ } p=2,
        \end{cases}
\end{gather*}
    where $x_{(i)} = (x_1,\dots,x_{i-1},x_{i+1},\dots x_d)\in\real^{d-1}$ and $c_p = \frac{p2^{p-1}\varGamma (\frac{p+1}{2} )}{\pi^{1/2}\varGamma (1-\frac{p}2 )}$ is the constant of the one-dimensional $p$-stable L\'evy measure, cf. Table~\ref{fig-cndf}.

    This means that $\ell_p^p$ is itself a continuous negative definite function, but its L\'evy measure is concentrated on the coordinate axes. Writing $\ell_p(x) = f_p(\ell_p^p(x))$ with
\begin{gather*}
f_p(\tau) = \tau^{1/p} = \gamma_{1/p}\int
_0^\infty \bigl(1-e^{-\tau t}\bigr)\,
\frac{\ddt}{t^{1+1/p}}, \quad \tfrac{1}p\in \bigl[\tfrac{1}2,1 \bigr],\;
\gamma_{1/p} = \frac{1}{p\varGamma (1-\frac{1}p )},
\end{gather*}
    shows that $\ell_p$ can be represented as a combination of the Bernstein function $f_p$ and the negative definite function $\ell_p^p$. In other words, $\ell_p$ is subordinate to $\ell_p^p$ in the sense of Bochner (cf. Sato~\cite[Chap.~30]{sato} or \cite[Chap.~5, Chap.~13.1]{ssv}) and it is possible to find the corresponding L\'evy--Khintchine representation, cf.~\cite[Thm.~30.1]{sato}. We have
\begin{gather*}
\ell_p(x) = \begin{cases}
        \displaystyle
        \int_{\real^d} (1-\cos\scalp xr)\,\sum_{i=1}^d \frac{c_p\,\ddx_i}{|x_i|^{2}} \otimes\delta_0(\ddx_{(i)}), &\text{if \ } p=1,\\
        \displaystyle
        \int_{\real^d} (1-\cos\scalp xr)\,\int_0^\infty \prod_{i=1}^d g_t(x_i)\,\frac{\ddt}{t^{1+1/p}}, &\text{if \ } p\in (1,2),\\
        \displaystyle
        \sqrt{\scalp{x}{x}}, &\text{if \ } p=2,
        \end{cases}
\end{gather*}
    where $x_i\mapsto g_t(x_i)$ is the probability density of the random variable $t^{p}X$ where $X$ is a one-dimensional, symmetric $1/p$-stable random variable.

    Although the $1/p$-stable density is known explicitly only for $1/p \in \{1,2\}$, one can show~-- this follows, e.g. from \cite[Thm.~15.10]{sato}~-- that it is strictly positive, i.e. the L\'evy measure of $\ell_p$, $p\in (1,2)$ has full support. For $p=1$ the measure does not have full support, since it is concentrated on the coordinate axes. For $p=2$, note that $\ell_2(x) = |x|$ corresponds to the Cauchy distribution with L\'evy measure given in Table~\ref{fig-cndf}, which has full support.
\end{proof}

Using the L\'evy--Khintchine representation \eqref{eq:lkf-sy} it is not hard
to see,\break cf.~\cite[Lem.~3.6.21]{Jaco2001}, that square roots of real-valued
cndfs are subadditive, i.e.
\begin{equation}
\label{ar-e10} \sqrt{\varTheta(x+y)} \leq \sqrt{\varTheta(x)}+\sqrt{
\varTheta(y)},\quad x,y\in\real^d
\end{equation}
and, consequently,
\begin{equation}
\label{ar-e11} \varTheta(x+y) \leq 2 \bigl(\varTheta(x)+\varTheta(y) \bigr),\quad
x,y\in\real^d.
\end{equation}

Using a standard argument, e.g. \cite[p.~44]{BoetSchiWang2013}, we can
derive from \eqref{ar-e10}, \eqref{ar-e11} that cndfs grow at most
quadratically as $x\to\infty$,
\begin{equation}
\label{ar-e12} \varTheta(x) \leq 2\sup_{|y|\leq 1}\varTheta(y)
\bigl(1+|x|^2\bigr).
\end{equation}

We will assume that $\varTheta(0)=0$ is the only zero of the function
$\varTheta$~-- incidentally, this means that $x\mapsto \ee^{-\varTheta(x)}$
is the characteristic function of a(n infinitely divisible) random variable
the distribution of which is non-lattice.
This and \eqref{ar-e10} show that
$(x,y)\mapsto\sqrt{\varTheta(x-y)}$ is a metric on $\real^d$ and
$(x,y)\mapsto \varTheta(x-y)$ is a quasi-metric, i.e. a function which
enjoys all properties of a metric, but the triangle inequality holds with a
multiplicative constant $c>1$. Metric measure spaces of this type have been
investigated by Jacob et al. \cite{JacoKnopLandSchi2011}. Historically, the
notion of negative definiteness has been introduced by I.J.~Schoenberg
\cite{Scho1938a} in a geometric context: he observed that for a real-valued
cndf\querymark{Q1} $\varTheta$ the function $d_\varTheta(x,y) := \sqrt{\varTheta(x-y)}$ is a metric on $\real^d$ and
that these are the only metrics such that $(\real^d,d_\varTheta)$ can be
isometrically embedded into a Hilbert space. In other words: $d_\varTheta$
behaves like a standard Euclidean metric in a possibly infinite-dimensional
space.

\subsection{Measuring independence of random variables with metrics}\label{ar-miwm}

Let $X,Y$ be random variables with values in $\real^m$ and $\real^n$,
respectively, and write $\law(X)$ and $\law(Y)$ for the corresponding
probability laws. For any metric $d(\cdot,\cdot)$ defined on the family of
$(m+n)$-dimensional probability distributions we have
\begin{equation}
\label{eq:inde-by-metric} d \bigl(\law(X,Y),\law(X)\otimes \law(Y) \bigr) = 0 \quad
\text{if, and only if, $X$, $Y$ are independent}.
\end{equation}

This equivalence can obviously be extended to finitely many random variables
$X_i$, $i=1,\dots,n$, taking values in $\real^{d_i}$, respectively: Set
$d:=d_1+\cdots+d_n$, take any metric $d(\cdot,\cdot)$ on the $d$-dimensional
probability distributions and consider $d (\law(X_1,\dots,X_n),
\bigotimes_{i=1}^n\law(X_i) )$. Moreover, the random variables $X_i,
i=1,\dots,n$, are independent if, and only if, $(X_1,\dots,X_{k-1})$ and
$X_k$ are independent for all $2\leq k\leq n$.\footnote{This is an immediate
consequence of the characterization of independence using characteristic
functions: $X$, $Y$ are independent if, and only if, $\Ee
\ee^{\ii\scalp{\xi}{X}+\ii\scalp{\eta}{Y}} = \Ee \ee^{\ii\scalp{\xi}{X}}\Ee
\ee^{\ii\scalp{\eta}{Y}}$ for all $\xi,\eta$.} In other words:
$X_1,\dots,X_n$ are independent if, and only if, for metrics on the
$d_1+\cdots+d_k$-dimensional probability distributions the distance of
$\law(X_1,\dots,X_k)$ and $\law(X_1,\dots,X_{k-1})\otimes\law(X_k)$ is zero
for $k=2,\ldots,n$. Thus, as in \eqref{eq:inde-by-metric}, only the concept
of independence of pairs of random variables is needed. In
\cite[Sec.~3.1]{part2} we use a variant of this idea to characterize
multivariate independence.

Thus \eqref{eq:inde-by-metric} is a good starting point for the construction
of (new) estimators for independence. For this it is crucial that the
(empirical) distance 
be computationally feasible. For discrete
distributions with finitely many values this yields the classical chi-squared
test of independence (using the $\chi^2$-distance). For more general
distributions other commonly used distances (e.g. relative entropy,
Hellinger distance, total variation, Prokhorov distance, Wasserstein
distance) might be employed (e.g. \cite{BerrSamw2017}), provided that they
are computationally feasible. It turns out that the latter\querymark{Q2} is, in particular,
satisfied by the following distance.
\begin{definition}\label{ar-05}
    Let $U,V$ be $d$-dimensional random variables and denote by $f_U$, $f_V$ their characteristic functions. For any symmetric measure $\rho$ on $\rdohne$ with full support we define the distance
\begin{equation}
\label{eq:d-general} d_\rho\bigl(\law(U),\law(V)\bigr) :=
\|f_U-f_V\|_{L^2(\rho)} = \biggl(\int
|f_U(r)-f_V(r)|^2 \,\rho(\ddr)
\biggr)^{1/2}.
\end{equation}
\end{definition}
The assumption that $\rho$ has full support, i.e.  $\rho(G)>0$ for every
nonempty open set $G\subset\rdohne$, ensures that $d_\rho(\law(U),\law(V)) =
0$ if, and only if, $\law(U) = \law(V)$, hence $d_\rho(\law(U),\law(V))$ is a
metric. The symmetry assumption on $\rho$ is not essential since the
integrand appearing in \eqref{eq:d-general} is even; therefore, we can always
replace $\rho(\ddr)$ by its symmetrization $\frac{1}2(\rho(\ddr)+\rho(-\ddr))$.

Currently it is unknown how the fact that the L\'evy measure $\rho$ has full
support can be expressed in terms of the cndf $\varTheta(u) = \int
(1-\cos\scalp ur)\,\rho(\ddr)$ given by $\rho$, see \eqref{eq:lkf-sy}.

Note that $d_\rho(\law(U),\law(V))$ is always well-defined in $[0,\infty]$.
Any of the following conditions ensure that $d_\rho(\law(U),\law(V))$ is
finite:
\begin{enumerate}
\item $\rho$ is a finite measure;
\item $\rho$ is a symmetric L\'evy measure (cf. \eqref{ar-e08}) and
    $\Ee|U|+\Ee|V|<\infty$.

Indeed, $d_\rho(\law(U),\law(V))<\infty$ follows from \xch{the}{the the} integrability
properties \eqref{ar-e08} of the L\'evy measure $\rho$ and the elementary
estimates
\begin{gather*}
|f_U(r)-f_V(r)| \leq \Ee |e^{\ii\scalp rU}-e^{\ii\scalp rV}|
\leq \Ee|\scalp{r} {(U-V)}|\leq |r|\cdot\Ee|U-V|
\\
\text{and}\quad |f_U(r)-f_V(r)|\leq 2.
\end{gather*}
\end{enumerate}

We obtain further sufficient conditions for $V(X,Y)<\infty$ in terms of
moments of the real-valued cndf $\varTheta$, see \eqref{eq:lkf-sy}, whose
L\'evy measure is $\rho$ and with $Q=0$.
\begin{proposition}\label{ar-31}
    Let $\rho$ be a symmetric L\'evy measure on $\rdohne$ with full support and denote by $\varTheta(u) = \int (1-\cos\scalp ur)\,\rho(\ddr)$, $u\in\real^d$, the real-valued cndf with L\'evy triplet $(l=0,Q=0,\rho)$. For all $d$-dimensional random variables $U,V$ the following assertions hold:
\begin{enumerate}
\item\label{ar-31-a} Assume that $(U',V')$ is an i.i.d.  copy of $(U,V)$.
    Then
\begin{equation}
\label{ar-e30} \Ee\varTheta\bigl(U-U'\bigr) + \Ee \varTheta
\bigl(V-V'\bigr) \leq 2\Ee\varTheta\bigl(U-V'\bigr).
\end{equation}

\item\label{ar-31-b} Let $U'$ be an i.i.d.  copy of $U$. Then
\begin{equation}
\label{eq:EPsi-new} \Ee\varTheta(U-V)\leq 2 \bigl(\Ee\varTheta(U)+\Ee\varTheta(V)
\bigr)
\end{equation}
    and for $V=U'$ one has $\Ee\varTheta(U-U')\leq 4\Ee\varTheta(U)$.

\item\label{ar-31-c} In $[0,\infty]$ we always have
\begin{equation}
\label{eq:v2bound-new} d_\rho^2\bigl(\law(U),\law(V)\bigr) \leq 4
\bigl(\Ee\varTheta(U) + \Ee\varTheta(V) \bigr).
\end{equation}

\item\label{ar-31-d} Let $(U',V')$ be an i.i.d.  copy of $(U,V)$ and
    assume  $\Ee\varTheta(U)+\Ee\varTheta(V)<\infty$. Then the following
    equality holds and all terms are finite
\begin{equation}
\label{ar-e36} d_\rho^2\bigl(\law(U),\law(V)\bigr) = 2\Ee
\varTheta\bigl(U-V'\bigr) - \Ee\varTheta\bigl(U-U'
\bigr) - \Ee\varTheta\bigl(V-V'\bigr).
\end{equation}
\end{enumerate}
\end{proposition}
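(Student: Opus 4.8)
The plan is to reduce all four assertions to a single identity, valid in $[0,\infty]$,
\begin{equation*}
d_\rho^2\bigl(\law(U),\law(V)\bigr) + \Ee\varTheta\bigl(U-U'\bigr) + \Ee\varTheta\bigl(V-V'\bigr) = 2\,\Ee\varTheta\bigl(U-V'\bigr),
\end{equation*}
where $(U',V')$ is an i.i.d.\ copy of $(U,V)$. To obtain it I would expand the integrand of $d_\rho^2$ pointwise in $r$: writing $f_U(r)\overline{f_U(r)} = \Ee\,\ee^{\ii\langle r,U-U'\rangle}$ and using that $U-U'$ is symmetric (so this equals $\Ee\cos\langle r,U-U'\rangle$), treating $|f_V(r)|^2$ the same way, and computing $\Re\bigl(f_U(r)\overline{f_V(r)}\bigr)=\Ee\cos\langle r,U-V'\rangle$ via the independence of $U$ and $V'$, one gets
\begin{equation*}
\bigl|f_U(r)-f_V(r)\bigr|^2 = \Ee\cos\langle r,U-U'\rangle + \Ee\cos\langle r,V-V'\rangle - 2\,\Ee\cos\langle r,U-V'\rangle.
\end{equation*}
Because $-1-1+2=0$, one may insert a constant $1$ in front of each cosine; moving the two now-nonnegative terms $1-\Ee\cos\langle r,U-U'\rangle\geq 0$ and $1-\Ee\cos\langle r,V-V'\rangle\geq 0$ to the left-hand side turns this into an equality between nonnegative functions of $r$. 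Integrating against $\rho$ and invoking Tonelli, together with $\int\bigl(1-\Ee\cos\langle r,W\rangle\bigr)\,\rho(\ddr)=\Ee\varTheta(W)$ (valid in $[0,\infty]$ since $\rho$ is $\sigma$-finite and $1-\cos\geq 0$), produces the master identity.

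From here the four claims follow with little effort. Since $d_\rho^2\geq 0$, dropping that term in the master identity yields \eqref{ar-e30}, which is part~\ref{ar-31-a}. Part~\ref{ar-31-b} does not use the identity at all: $\varTheta$ is even (being real-valued) and satisfies the quadratic estimate \eqref{ar-e11}, hence $\varTheta(U-V)=\varTheta\bigl(U+(-V)\bigr)\leq 2\bigl(\varTheta(U)+\varTheta(V)\bigr)$ pointwise; taking expectations gives \eqref{eq:EPsi-new}, and the choice $V=U'$ gives $\Ee\varTheta(U-U')\leq 4\Ee\varTheta(U)$. For part~\ref{ar-31-c}, the master identity gives $d_\rho^2\leq 2\,\Ee\varTheta(U-V')$ in $[0,\infty]$, and part~\ref{ar-31-b} applied to the pair $(U,V')$ bounds $\Ee\varTheta(U-V')\leq 2\bigl(\Ee\varTheta(U)+\Ee\varTheta(V)\bigr)$, giving \eqref{eq:v2bound-new}. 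Finally, under the hypothesis $\Ee\varTheta(U)+\Ee\varTheta(V)<\infty$ of part~\ref{ar-31-d}, part~\ref{ar-31-b} forces all of $\Ee\varTheta(U-U')$, $\Ee\varTheta(V-V')$ and $\Ee\varTheta(U-V')$ to be finite, so the master identity may be rearranged into \eqref{ar-e36} with every term finite.

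The only point that needs genuine care is the passage from the pointwise expansion to the integrated master identity without ever running into $\infty-\infty$; this is exactly why I keep the two subtracted terms on the nonnegative side of the equation before integrating, so that only Tonelli (and not Fubini together with an integrability hypothesis) is required and the identity holds verbatim in $[0,\infty]$. If one wished to bypass this bookkeeping, an equivalent route is to first restrict $\rho$ to an annulus $\{1/n<|r|<n\}$, where the measure is finite and all quantities involved are manifestly finite, establish the identity there by plain linearity, and then pass to the limit $n\to\infty$ using monotone convergence (the truncated $\varTheta$ increasing to $\varTheta$); the direct argument, however, is shorter.
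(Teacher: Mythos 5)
Your proof is correct, and the core algebraic step is the same as the paper's: expand $|f_U-f_V|^2$ into $|f_U|^2+|f_V|^2-2\Re(f_U\overline{f_V})$, identify each piece with an expectation over independent copies, and insert the constants $-1+2-1=0$ to produce the three $\Ee\varTheta$-terms. Where you genuinely diverge is in handling a possibly infinite L\'evy measure. The paper first proves the identity \eqref{ar-e36} for \emph{finite} $\rho$ (where $\varTheta$ is bounded and everything is trivially integrable), and then passes to general $\rho$ by truncating to $\rho_\epsilon(\ddr)=\I_{(\epsilon,\infty)}(|r|)\,\rho(\ddr)$ and invoking monotone convergence, with the moment condition of part d) needed to control the limits. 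You instead rearrange the pointwise identity so that every term is nonnegative \emph{before} integrating, so a single application of Tonelli yields the ``master identity'' $d_\rho^2(\law(U),\law(V))+\Ee\varTheta(U-U')+\Ee\varTheta(V-V')=2\,\Ee\varTheta(U-V')$ verbatim in $[0,\infty]$, from which parts a), c) and d) drop out by dropping or rearranging finite terms; part b) is proved from \eqref{ar-e11} exactly as in the paper. Your route is shorter and avoids the $\epsilon$-limit entirely (you correctly note the truncation argument as a fallback, which is precisely what the paper does); what the paper's two-stage argument buys in exchange is that the truncation machinery is set up anyway, since it is reused later in Section~\ref{sec:gdc-infinite} and in the consistency proof. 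One small point worth making explicit in your write-up: the deduction of part c) needs $\Ee\varTheta(U-V')\leq 2(\Ee\varTheta(U)+\Ee\varTheta(V'))$ with $V'$ \emph{independent} of $U$, which is just \eqref{eq:EPsi-new} applied to the independent pair $(U,V')$ together with $\law(V')=\law(V)$ --- you do say this, and it is fine, since \eqref{eq:EPsi-new} is a pointwise inequality requiring no independence at all.
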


\begin{proof}
    Let us assume first that $\rho$ is a finite L\'evy measure, thus, $\varTheta$ a bounded cndf. We denote by $U'$ an i.i.d.  copy of $U$. Since $\law(U-U')$ is symmetric, we can use Tonelli's theorem to get
\begin{align} \int \bigl( 1-f_U(r)
\overline{f_U(r)} \bigr)\rho(\ddr) &= \int \bigl(1 - \Ee\bigl(
\ee^{\ii \scalp{U}{r}} \ee^{-\ii \scalp{U'}{r}}\bigr) \bigr) \rho(\ddr)\notag
\\
&= \int \Ee \bigl(1 - \cos \bigl[\scalp{\bigl(U-U'\bigr)} {r}
\bigr] \bigr) \,\rho(\ddr)\notag
\\
&= \Ee\varTheta\bigl(U-U'\bigr)\notag
\\
&= \int \varTheta\bigl(u-u'\bigr)\, \Pp_U \otimes
\Pp_U\bigl(\ddu,\ddu'\bigr).  \label{eq:Etheta}
\end{align}

    Now we consider an i.i.d.  copy $(U',V')$ of $(U,V)$ and use the above equality in \eqref{eq:d-general}. This yields
\begin{align}
 &\notag d_\rho^2\bigl(\law(U),\law(V)\bigr)
\\
&\notag= \int \bigl( f_U(r)\overline{f_U(r)} -
{f_U(r)}\overline{f_V(r)} - f_V(r)
\overline{f_U(r)} + f_V(r)\overline{f_V(r)}
\bigr) \rho(\ddr)
\\
&\notag= \int \bigl( f_U(r)\overline{f_U(r)} - 1 + 2
- 2 \Re \bigl({f_U(r)}\overline{f_V(r)} \bigr) +
f_V(r)\overline{f_V(r)} - 1 \bigr) \rho(\ddr)
\\
&= 2\Ee\varTheta\bigl(U-V'\bigr) - \Ee\varTheta
\bigl(U-U'\bigr) - \Ee \varTheta\bigl(V-V'\bigr).\label{eq:d2-def}
\end{align}
    This proves \eqref{ar-e36} and, since $d_\rho(\law(U),\law(V))\geq 0$, also \eqref{ar-e30}. Combining Part~b) and \eqref{ar-e36} yields \eqref{eq:v2bound-new}, while \eqref{eq:EPsi-new} immediately follows from the subadditivity of a cndf \eqref{ar-e11}.

    If $\rho$ is an arbitrary L\'evy measure, its truncation $\rho_\epsilon(\ddr) := \I_{(\epsilon,\infty)}(|r|)\,\rho(\ddr)$ is a finite L\'evy measure and the corresponding cndf $\varTheta_\epsilon$ is bounded. In particular, we have \ref{ar-03-a}--\ref{ar-03-d} for $\rho_\epsilon$ and $\varTheta_\epsilon$. Using monotone convergence we get
\begin{gather*}
\varTheta(u) = \sup_{\epsilon>0} \varTheta_\epsilon(u) = \sup
_{\epsilon>0} \int_{|r|>\epsilon} (1-\cos\scalp{u} {r})\,
\rho(\ddr).
\end{gather*}

    Again by monotone convergence we see that the assertions a)--c) remain valid for general L\'evy measures~-- if we allow the expressions to attain values in $[0,\infty]$. Because of \eqref{eq:EPsi-new}, the moment condition assumed in Part~d) ensures that the limits
\begin{gather*}
\lim_{\epsilon\to 0}\Ee\varTheta\bigl(U-V'\bigr) = \sup
_{\epsilon>0}\Ee\varTheta\bigl(U-V'\bigr)\quad
\text{etc.}
\end{gather*}
    are finite, and \eqref{ar-e36} carries over to the general situation.
\end{proof}

\begin{remark}\label{ar-33}
  a)      Since $U$ and $V$ play symmetric roles in \eqref{eq:d2-def} it is clear that
\begin{align}
&d_\rho^2\bigl(\law(U),
\law(V)\bigr)
\notag\\
&= \Ee\varTheta\bigl(U-V'\bigr) + \Ee\varTheta
\bigl(V-U'\bigr) - \Ee\varTheta\bigl(U-U'\bigr) - \Ee
\varTheta\bigl(V-V'\bigr)
\notag\\
&= \int \varTheta\bigl(u-u'\bigr)\, (2\Pp_U \otimes
\Pp_V - \Pp_U \otimes \Pp_U -
\Pp_V \otimes \Pp_V) \bigl(\ddu,\ddu'
\bigr). \label{eq:d2-def-2}\end{align}

b)      While $d_\rho(\law(U),\law(V))\in [0,\infty]$ is always defined, the right-hand side of \eqref{ar-e36} needs attention. If we do not assume the moment condition $\Ee\varTheta(U)+\Ee\varTheta(V)<\infty$, we still have
\begin{equation}
\label{ar-e38} d_\rho^2\bigl(\law(U),\law(V)\bigr) = \lim
_{\epsilon\to 0} \xch{\bigl(2\Ee\varTheta_\epsilon\bigl(U-V'
\bigr)-\varTheta_\epsilon\bigl(U-U'\bigr)-
\varTheta_\epsilon\bigl(V-V'\bigr) \bigr),}{\bigl(2\Ee\varTheta_\epsilon\bigl(U-V'
\bigr)-\varTheta_\epsilon\bigl(U-U'\bigr)-
\varTheta_\epsilon\bigl(V-V'\bigr) \bigr)}
\end{equation}
    but it is not clear whether the limits exist for each term.

    The moment condition $\Ee\varTheta(U)+\Ee\varTheta(V)<\infty$ is sharp in the sense that it follows from $\Ee\varTheta(U-V')<\infty$: Since $U$ and $V'$ are independent, Tonelli's theorem entails that $\Ee\varTheta(u-V')<\infty$ for some $u\in\real^d$. Using the symmetry and sub-additivity of $\sqrt{\varTheta}$, see \eqref{ar-e11}, we get $\varTheta(V')\leq 2 (\varTheta(u-V')+\varTheta(u) )$, i.e. $\Ee\varTheta(V')<\infty$; $\Ee\varTheta(U)<\infty$ follows in a similar fashion.

c)      Since a cndf $\varTheta$ grows at most quadratically at infinity, see \eqref{ar-e12}, it is clear that
\begin{equation}
\label{ar-e40} \Ee \bigl(|U|^2 \bigr) + \Ee \bigl(|V|^2
\bigr) < \infty \quad\text{implies}\quad \Ee\varTheta(U)+\Ee\varTheta(V) < \infty.
\end{equation}
    One should compare this to the condition $\Ee|U|+\Ee|V|<\infty$ which ensures the finiteness of $d_\rho^2(\law(U),\law(V))$, but not necessarily the finiteness of the terms appearing in the representation \eqref{ar-e36}.

d)  As described at the beginning of this section,  a measure of independence of $X_1,\ldots,X_n$ is given by $d_\rho (\law(X_1,\dots,X_n), \bigotimes_{i=1}^n\law(X_i) )$. This can be estimated by empirical estimators for \eqref{ar-e36}. For the $1$-stable (i.e. Cauchy) cndf, see Table~1, this direct approach to (multivariate) independence has recently been proposed by \cite{JinMatt2017}~-- but the \emph{exact} estimators become computationally challenging even for small samples. A further approximation recovers a computationally feasible estimation, resulting in a loss of power compared with our approach, cf.~\cite{Boet2017}.
\end{remark}

It is worth mentioning that the metric $d_\rho$ can be used to describe
convergence in distribution.
\begin{lemma}\label{ar-35}
Let $\rho$ be a finite symmetric measure with full support, then $d_\rho$
given in \eqref{eq:d-general} is a metric which characterizes convergence in
distribution, i.e. for random variables $X_n$, $n\in\nat$, and $X$ one has
\begin{equation}
\label{ar-e50} X_n\xrightarrow[n\to\infty]{d} X \iff
d_\rho\bigl(\law(X_n),\law(X)\bigr) \xrightarrow[n\to
\infty]{} 0.
\end{equation}
\end{lemma}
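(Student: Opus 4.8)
The plan is to treat the two implications in \eqref{ar-e50} separately; that $d_\rho$ is a metric on the probability laws on $\real^d$ is already essentially the content of the discussion following Definition~\ref{ar-05} (finiteness of $\rho$ gives $d_\rho<\infty$, the triangle inequality is inherited from the $L^2(\rho)$-norm, symmetry is clear, and $d_\rho(\law(U),\law(V))=0$ forces $f_U=f_V$ $\rho$-a.e., hence everywhere by continuity of characteristic functions together with the full support of $\rho$, so $\law(U)=\law(V)$).

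For ``$\Rightarrow$'' I would simply invoke dominated convergence: if $X_n\xrightarrow{d}X$, then $f_{X_n}(r)\to f_X(r)$ for every $r$, and since $|f_{X_n}(r)-f_X(r)|^2\le 4$ is $\rho$-integrable (here the finiteness of $\rho$ is essential, and it is also where the statement would break down for an infinite L\'evy measure), we get $d_\rho^2(\law(X_n),\law(X))=\int|f_{X_n}-f_X|^2\,\rho(\ddr)\to 0$.

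For ``$\Leftarrow$'' I would use the subsequence criterion for convergence in distribution: it is enough to show that every subsequence of $(X_n)$ has a further subsequence converging in distribution to $X$. Fix a subsequence; along it $f_{X_n}\to f_X$ in $L^2(\rho)$ still holds, so passing to a further subsequence (relabelled $(X_{n_k})$) we may assume $f_{X_{n_k}}\to f_X$ $\rho$-a.e., and since $\rho$ has full support the set of $r$ for which this holds is dense in $\real^d$ (no $\rho$-null set contains a nonempty open set). It then remains to show that pointwise convergence of the characteristic functions on a dense set, the limit $f_X$ being continuous, forces $X_{n_k}\xrightarrow{d}X$; this I would split into (i) tightness of $(X_{n_k})$ and (ii) identification of weak limit points. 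Part (ii) is routine: by Prokhorov's theorem any weak limit point has a characteristic function that is a pointwise limit of the $f_{X_{n_k}}$ along a sub-subsequence, this limit coincides with $f_X$ on a dense set, and two continuous functions agreeing on a dense set are equal, so the only possible limit point is $\law(X)$; combined with tightness this gives $X_{n_k}\xrightarrow{d}X$.

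The hard part is (i), the tightness. I would derive it from the classical characteristic-function tightness estimate, which in $\real^d$ takes a form like $\Pp\bigl(|Z|\ge R\bigr)\le c_d\,R^d\int_{[-1/R,1/R]^d}\bigl(1-\Re f_Z(t)\bigr)\,\dd t$ for $R>0$; so it suffices to show that $\limsup_k R^d\int_{[-1/R,1/R]^d}\bigl(1-\Re f_{X_{n_k}}(t)\bigr)\,\dd t$ is arbitrarily small once $R$ is large. The integrand is nonnegative, bounded by $2$, and converges to $1-\Re f_X$ on the dense set of good points; the delicate point — and the place where the full-support assumption on $\rho$ enters in an essential way — is to pass these shrinking-cube integrals to the limit $R^d\int_{[-1/R,1/R]^d}\bigl(1-\Re f_X(t)\bigr)\,\dd t$. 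Once that is justified, the latter tends to $0$ as $R\to\infty$ because $f_X$ is continuous at the origin with $f_X(0)=1$, which yields $\limsup_k\Pp(|X_{n_k}|\ge R)\to 0$; treating the first finitely many indices separately (each a single, hence tight, law) gives tightness of the whole subsequence, and the proof is complete.
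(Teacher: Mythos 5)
There is a genuine gap in your ``$\Leftarrow$'' direction, and it sits exactly at the point you flag as ``delicate'' and then skip. After passing to a subsequence you only know $f_{X_{n_k}}(r)\to f_X(r)$ for $\rho$-almost every $r$. Full support of $\rho$ makes this exceptional-free set dense, but it need \emph{not} have full (or even positive) Lebesgue measure: a finite symmetric measure with full support may be purely atomic, e.g.\ a sum of point masses on a countable dense symmetric subset of $\rdohne$, in which case $\rho$-a.e.\ convergence means convergence on a countable, Lebesgue-null set. Your tightness step, however, requires passing to the limit in the \emph{Lebesgue} integral $\int_{[-1/R,1/R]^d}(1-\Re f_{X_{n_k}}(t))\,\ddt$, and dominated convergence gives you nothing there because the integrand is only known to converge on a Lebesgue-null set. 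Density of the good set does not repair this, so the claim ``once that is justified'' is doing all the work and is never discharged. The missing ingredient is a rigidity property of positive definite functions: if a sequence of characteristic functions converges on a dense set (or Lebesgue-a.e.) to a function that is the restriction of a characteristic function, then it converges everywhere, even locally uniformly. This is precisely what the paper invokes (Sasv\'ari, Thm.~1.5.2), after which L\'evy's continuity theorem finishes the proof with no separate tightness argument, since the limit is already known to be a characteristic function and hence continuous at the origin.

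Your treatment of the metric property, the ``$\Rightarrow$'' direction by dominated convergence, and the identification of weak limit points via continuity and density in step (ii) are all correct and agree with the paper; only the tightness step (i) needs to be replaced by (or supplemented with) the dense-convergence theorem for positive definite functions.
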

The proof below shows that the implication ``$\Leftarrow$'' does not need the
finiteness of the L\'evy measure $\rho$.

\begin{proof}
    Convergence in distribution implies pointwise convergence of the characteristic functions. Therefore, we see by dominated convergence and because of the obvious estimates $|f_{X_n}|\leq 1$ and $|f_X|\leq 1$ that
\begin{align*}
\lim_{n\to\infty} \int_{\real^d}
|f_{X_n}(r)-f_X(r)|^2 \,\rho(\ddr) = \int
_{\real^d} \lim_{n\to\infty} |f_{X_n}(r)-f_X(r)|^2
\,\rho(\ddr) = 0.
\end{align*}

    Conversely, assume that $\lim_{n\to\infty} d_\rho(\law(X_n),\law(X))=0$. If we interpret this as convergence in $L^2(\rho)$, we see that there is a Lebesgue a.e. convergent subsequence $f_{X_{n(k)}}\to f_X$; since $f_{X_{n(k)}}$ and $f_X$ are characteristic functions, this convergence is already pointwise, hence locally uniform, see Sasv\'ari \cite[Thm.~1.5.2]{Sasv1994}. By L\'evy's continuity theorem, this entails the convergence in distribution of the corresponding random variables. Since the limit does not depend on the subsequence, the whole sequence must converge in distribution.
\end{proof}

\subsection{An elementary estimate for log-moments}

 Later on we need certain log-moments of the norm of a random vector. The following lemma allows us to formulate these moment conditions in terms of the coordinate processes.
\begin{lemma}\label{ar-61}
    Let $X,Y$ be one-dimensional random variables and $\epsilon>0$. Then\break $\Ee\log^{1+\epsilon} (1\vee \sqrt{X^2+Y^2} )$ is finite if, and only if, the moments $\Ee\log^{1+\epsilon}(1+X^2)$ and $\Ee\log^{1+\epsilon}(1+Y^2)$ are finite.
\end{lemma}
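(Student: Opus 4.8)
The plan is to reduce the two-dimensional log-moment to the one-dimensional ones by squeezing $1\vee\sqrt{X^2+Y^2}$ between elementary expressions in $1+X^2$ and $1+Y^2$. First I would observe the pointwise bounds
\begin{gather*}
1\vee\sqrt{X^2+Y^2} \leq \sqrt{(1+X^2)(1+Y^2)} \leq (1+X^2)(1+Y^2),
\end{gather*}
and, on the other side,
\begin{gather*}
(1\vee X^2)(1\vee Y^2) \leq \bigl(1+(X^2+Y^2)\bigr)^2 \leq \bigl(1\vee\sqrt{X^2+Y^2}\bigr)^4 \cdot 4,
\end{gather*}
or more simply $1+X^2 \leq 2(1\vee\sqrt{X^2+Y^2})^2$ and likewise for $Y$. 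Taking logarithms turns products into sums and powers into multiplicative constants, so up to additive and multiplicative constants the quantity $\log(1\vee\sqrt{X^2+Y^2})$ is comparable to $\log(1+X^2)+\log(1+Y^2)$.

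The only nontrivial point is that we are dealing with the $(1+\epsilon)$-th power of the logarithm, not the logarithm itself, so additivity is not preserved exactly. Here I would use the elementary inequality $(a+b)^{1+\epsilon} \leq 2^{\epsilon}(a^{1+\epsilon}+b^{1+\epsilon})$ for $a,b\geq 0$ (convexity of $t\mapsto t^{1+\epsilon}$), together with the fact that adding a constant $c$ inside, i.e. passing from $(\log z)^{1+\epsilon}$ to $(c+\log z)^{1+\epsilon}$, again only costs a multiplicative constant once $z$ is bounded below by $1$. Concretely, from the chain of pointwise bounds above one gets
\begin{gather*}
\log\bigl(1\vee\sqrt{X^2+Y^2}\bigr) \leq \log(1+X^2)+\log(1+Y^2),
\end{gather*}
hence $\log^{1+\epsilon}(1\vee\sqrt{X^2+Y^2}) \leq 2^{\epsilon}\bigl(\log^{1+\epsilon}(1+X^2)+\log^{1+\epsilon}(1+Y^2)\bigr)$, which gives one implication after taking expectations. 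For the converse, from $1+X^2 \leq 2(1\vee\sqrt{X^2+Y^2})^2$ we obtain $\log(1+X^2) \leq \log 2 + 2\log(1\vee\sqrt{X^2+Y^2})$, and since $\log(1\vee\sqrt{X^2+Y^2})\geq 0$ we can bound $\log(1+X^2)\leq (\log 2 + 2)\bigl(1\vee\log(1\vee\sqrt{X^2+Y^2})\bigr)$; raising to the power $1+\epsilon$ and using $1\vee a \leq 1+a$ yields $\log^{1+\epsilon}(1+X^2)\leq C_\epsilon\bigl(1+\log^{1+\epsilon}(1\vee\sqrt{X^2+Y^2})\bigr)$, and symmetrically for $Y$. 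Taking expectations finishes the proof.

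The main (and really only) obstacle is bookkeeping: keeping track of the constants that arise from convexity of $t\mapsto t^{1+\epsilon}$ and from absorbing additive constants inside the logarithm, and making sure one always exploits $1\vee\sqrt{X^2+Y^2}\geq 1$ so that these manipulations stay legitimate. No measure-theoretic subtlety is involved since all quantities are nonnegative and the estimates are pointwise, so monotonicity of the expectation does the rest.
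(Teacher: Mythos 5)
Your proof is correct and takes essentially the same route as the paper's: the forward direction rests on the same pointwise bound $1\vee\sqrt{X^2+Y^2}\leq\sqrt{(1+X^2)(1+Y^2)}$ combined with convexity of $t\mapsto t^{1+\epsilon}$, and the converse on the reverse comparison $1+X^2\leq 2\bigl(1\vee\sqrt{X^2+Y^2}\bigr)^2$. The only difference is bookkeeping of the additive constants (the paper splits the expectation on $\{|X|<1\}$ and reuses the convexity inequality, while you absorb $\log 2$ via $1\vee\log(\cdot)$), which does not change the substance of the argument.
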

\begin{proof}
    Assume that $\Ee\log^{1+\epsilon}(1+X^2) + \Ee\log^{1+\epsilon}(1+Y^2)<\infty$. Since
\begin{gather*}
1\vee \sqrt{X^2+Y^2} = \sqrt{\bigl(X^2+Y^2
\bigr) \vee 1} \leq \xch{\sqrt{\bigl(1+X^2\bigr) \bigl(1+Y^2
\bigr)},}{\sqrt{\bigl(1+X^2\bigr) \bigl(1+Y^2
\bigr)}}
\end{gather*}
    we can use the elementary estimate $(a+b)^{1+\epsilon}\leq 2^\epsilon(a^{1+\epsilon}+b^{1+\epsilon})$, $a,b\geq 0$, to get
\begin{align*}
\Ee\log^{1+\epsilon} \bigl(1\vee \sqrt{X^2+Y^2} \bigr)
&\leq \Ee \biggl[ \biggl(\frac{1}2\log\bigl(1+X^2\bigr)+
\frac{1}2\log\bigl(1+Y^2\bigr) \biggr)^{1+\epsilon} \biggr]
\\
&\leq \frac{1}2 \Ee\log^{1+\epsilon}\bigl(1+X^2\bigr) +
\frac{1}2 \Ee\log^{1+\epsilon}\bigl(1+Y^2\bigr).
\end{align*}

\noindent
    Conversely, assume that $\Ee\log^{1+\epsilon} (1\vee \sqrt{X^2+Y^2} )<\infty$. Then we have
\begin{align*}
\Ee &\log^{1+\epsilon}\bigl(1+X^2\bigr)
\\
&= \Ee \bigl[ \I_{\{|X|<1\}} \log^{1+\epsilon}\bigl(1+X^2
\bigr) \bigr] 
 + \Ee \bigl[ \I_{\{|X|\geq 1\}} \log^{1+\epsilon}
\bigl(1+X^2\bigr) \bigr]
\\
&\leq \log^{1+\epsilon}2 + \Ee\log^{1+\epsilon} \bigl[\bigl(2X^2
\bigr)\vee 2 \bigr]
\\
&\leq \log^{1+\epsilon}2 + \Ee \bigl[ \bigl(\log 2 + \log\bigl(1\vee
X^2\bigr) \bigr)^{1+\epsilon} \bigr]
\\
&\leq \bigl(1+2^\epsilon\bigr)\log^{1+\epsilon}2 + \Ee
\log^{1+\epsilon}\bigl(1\vee \bigl(X^2+Y^2\bigr)\bigr)
\\
&\leq \bigl(1+2^\epsilon\bigr)\log^{1+\epsilon}2 + 2^{1+\epsilon}\Ee
\log^{1+\epsilon} \bigl(1\vee \sqrt{X^2+Y^2} \bigr),
\end{align*}
    and $\Ee\log^{1+\epsilon}(1+Y^2)<\infty$ follows similarly.
\end{proof}

\section{Generalized distance covariance}\label{sec:gdc}

Sz\'ekely et al. \cite{SzekRizzBaki2007,SzekRizz2009} introduced distance
covariance for two random variables $X$ and $Y$ with values in $\real^m$ and
$\real^n$ as
\begin{gather*}
\mathcal{V}^2(X,Y;w) := \int_{\real^n}\int
_{\real^m} \bigl\llvert f_{(X,Y)}(x,y)-f_X(x)f_Y(y)
\bigr\rrvert ^2\,w(x,y)\,\ddx\,\xch{\ddy,}{\ddy}
\end{gather*}
with the weight $w(x,y)=w_{\alpha,m}(x)w_{\alpha,n}(y)$ where
$w_{\alpha,m}(x)=c(p,\alpha) |x|^{-m-\alpha}$, $m,n\in\nat$, $\alpha\in
(0,2)$. It is well known from the study of infinitely divisible distributions
(see also Sz\'ekely \& Rizzo \cite{SzekRizz2005}) that $w_{\alpha,m}(x)$ is
the density of an $m$-dimensional $\alpha$-stable L\'evy measure, and the
corresponding cndf is just $|x|^\alpha$.

We are going to extend distance covariance to products of L\'evy measures.
\begin{definition}\label{gdc-05}
    Let $X$ and $Y$ be random variables with values in $\real^m$ and $\real^n$ and $\rho:=\mu\otimes\nu$ where $\mu$ and $\nu$ are symmetric L\'evy measures on $\rmohne$ and $\rnohne$, both having full support. The \emph{generalized distance covariance} $V(X,Y)$ is defined as
\begin{equation}
\label{eq:v2-def} V^2(X,Y) = \iint_{\real^{m+n}} |f_{(X,Y)}(s,t)-f_X(s)f_Y(t)|^2
\,\mu(\dds)\,\nu(\ddt).
\end{equation}
\end{definition}
By definition, $V(X,Y) = \|f_{(X,Y)}-f_X\otimes f_Y\|_{L^2(\rho)}$ and, in
view of the discussion in Section~\ref{ar-miwm}, we have
\begin{equation}
\label{eq:veqd} V(X,Y) = d_\rho\bigl(\law(U),\law(V)\bigr),
\end{equation}
where $U:=(X_1,Y_1), V:=(X_2,Y_3)$ and $(X_1,Y_1), (X_2,Y_2), (X_3,Y_3)$ are
i.i.d.  copies of $(X,Y)$.\footnote{This is a convenient way to say that
$\law(U) = \law((X,Y))$ while $\law(V) = \law(X)\otimes\law(Y)$.} It is clear
that the product measure $\rho$ inherits the properties ``symmetry'' and
``full support'' from its marginals $\mu$ and $\nu$.

From the discussion following Definition~\ref{ar-05} we immediately get the
next lemma.
\begin{lemma}\label{gdc-07}
    Let $V^2(X,Y)$ be generalized distance covariance of the $m$- resp. $n$-dimensional random variables $X$ and $Y$, cf. \eqref{eq:v2-def}. The random variables $X$ and $Y$ are independent if, and only if, $V^2(X,Y)=0$.
\end{lemma}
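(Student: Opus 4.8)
The plan is to piece together three facts that have already been set up in the excerpt. First, by \eqref{eq:veqd} we have $V(X,Y) = d_\rho(\law(U),\law(V))$ with $\law(U) = \law((X,Y))$ and $\law(V) = \law(X)\otimes\law(Y)$, where $\rho = \mu\otimes\nu$. Second, as noted after Definition~\ref{ar-05}, a symmetric measure with full support makes $d_\rho$ a genuine metric, so $d_\rho$ separates laws. Third, for any metric on $(m+n)$-dimensional laws the equivalence \eqref{eq:inde-by-metric} turns ``$d(\law(X,Y),\law(X)\otimes\law(Y)) = 0$'' into ``$X,Y$ independent''. Chaining these gives the lemma.

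In more detail, I would first record that $\rho = \mu\otimes\nu$ is symmetric and has full support on $(\real^m\setminus\{0\})\times(\real^n\setminus\{0\})$: every nonempty open set there contains a nonempty open rectangle $G_1\times G_2$, and $\rho(G_1\times G_2) = \mu(G_1)\,\nu(G_2) > 0$ since $\mu,\nu$ have full support. For the ``if'' direction, assume $V^2(X,Y) = 0$. The integrand $(s,t)\mapsto |f_{(X,Y)}(s,t)-f_X(s)f_Y(t)|^2$ in \eqref{eq:v2-def} is nonnegative and continuous, so an integral of zero against a measure charging every nonempty open set forces the integrand to vanish on the support of $\rho$, i.e. on all of $(\real^m\setminus\{0\})\times(\real^n\setminus\{0\})$. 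By continuity (and trivially, since $f_X(0) = f_Y(0) = 1$) the identity $f_{(X,Y)}(s,t) = f_X(s)f_Y(t)$ extends to the coordinate hyperplanes $\{s=0\}$ and $\{t=0\}$, hence holds on all of $\real^{m+n}$. Uniqueness of characteristic functions then yields $\law((X,Y)) = \law(X)\otimes\law(Y)$, which is precisely independence of $X$ and $Y$.

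The ``only if'' direction is immediate: if $X$ and $Y$ are independent then $f_{(X,Y)}(s,t) = f_X(s)f_Y(t)$ for every $(s,t)$, so the integrand in \eqref{eq:v2-def} is identically zero and $V^2(X,Y) = 0$.

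The only step needing mild care — the ``main obstacle'', modest as it is — is the passage from ``the integral vanishes'' to ``the characteristic functions agree everywhere''; this uses the full-support hypothesis together with continuity of characteristic functions, plus the observation that although $\mu\otimes\nu$ is supported on the product of the punctured spaces rather than literally all of $\real^{m+n}\setminus\{0\}$, it still determines $\law((X,Y))$ versus $\law(X)\otimes\law(Y)$, since characteristic functions are fixed by their values off the coordinate hyperplanes. Everything else is bookkeeping with \eqref{eq:veqd} and \eqref{eq:inde-by-metric}.
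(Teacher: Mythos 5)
Your proposal is correct and follows the same route the paper intends: the paper's proof is just the remark that the lemma is immediate from \eqref{eq:veqd} together with the discussion after Definition~\ref{ar-05} (full support of $\rho=\mu\otimes\nu$ makes $d_\rho$ a metric) and \eqref{eq:inde-by-metric}. You merely spell out the details the paper leaves implicit, including the correct observation that full support of the product measure plus continuity of characteristic functions extends the identity $f_{(X,Y)}=f_Xf_Y$ from the punctured product to all of $\real^{m+n}$.
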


\subsection{Generalized distance covariance with finite L\'evy measures}\label{sec:gdc-finite}
Fix the dimensions $m,n\in\nat$, set $d:=m+n$, and assume that the measure
$\rho$ is of the form $\rho = \mu \otimes \nu$ where $\mu$ and $\nu$ are
finite symmetric L\'evy measures on $\rmohne$ and $\rnohne$, respectively. If
we integrate the elementary estimates
\begin{gather*}
1\wedge|s|^2 \leq 1\wedge\bigl(|s|^2+|t|^2
\bigr) \leq \bigl(1\wedge|s|^2\bigr) + \xch{\bigl(1\wedge |t^2|
\bigr),}{\bigl(1\wedge |t^2|
\bigr)}
\\
1\wedge|t|^2 \leq 1\wedge\bigl(|s|^2+|t|^2
\bigr) \leq \bigl(1\wedge|s|^2\bigr) + \xch{\bigl(1\wedge |t^2|\bigr),}{\bigl(1\wedge |t^2|\bigr)}
\end{gather*}
with respect to $\rho(\dds,\ddt)=\mu(\dds)\,\nu(\ddt)$, it follows that
$\rho$ is a L\'evy measure if $\mu$ and $\nu$ are finite L\'evy
measures.\footnote{This argument also shows that the product measure $\rho$
can only be a L\'evy measure, if the marginals are finite measures. In this
case, $\rho$ is itself a finite measure.} We also assume that $\mu$ and
$\nu$, hence $\rho$, have full support.

Since $V(X,Y)$ is a metric in the sense of Section~\ref{ar-miwm} we can use
all results from the previous section to derive various representations of
generalized distance covariance.

We write $\varPhi$, $\varPsi$ and $\varTheta$ for the bounded cndfs induced
by $\mu(\dds)$, $\nu(\ddt)$, and $\rho(\ddr)=\mu(\dds)\,\nu(\ddt)$,
\begin{align}
\notag\varPhi(x) =&{} \int_{\real^m} (1-
\cos\scalp{x} {s} ) \,\mu(\dds), \quad \varPsi(y) = \int_{\real^n}
(1- \cos\scalp{y} {t} ) \,\nu(\ddt)
\\*
&{}\text{and}\quad \varTheta(x,y) = \varTheta(u) = \int_{\real^{m+n}}
(1- \cos\scalp{u} {r} )\,\rho(\ddr), \label{eq:PsiPhi}
\end{align}
with $u=(x,y)\in\real^{m+n}$ and $r=(s,t)\in\real^{m+n}$. The symmetry in
each variable and the elementary identity
\begin{align*}
\frac{1}{2} \bigl[1-&\cos(\scalp{x} {s}-\scalp{y} {t}) + 1-\cos(\scalp{x}
{s}+\scalp{y} {t}) \bigr]
\\
&= (1- \cos\scalp{y} {t}) + (1-\cos \scalp{x} {s}) - (1-\cos\scalp{x} {s}) (1-
\cos\scalp{y} {t})
\end{align*}
yield the representation
\begin{equation}
\label{eq:theta-rel} \varTheta(x,y) = \varPsi(y)\mu\bigl(\real^m\bigr) +
\varPhi(x)\nu\bigl(\real^n\bigr) - \varPhi(x)\varPsi(y).
\end{equation}

We can now easily apply the results from Section~\ref{ar-miwm}. In order to
do so, we consider six i.i.d.  copies $(X_i,Y_i)$, $i=1,\dots,6$, of the
random vector $(X,Y)$, and set $U:=(X_1,Y_1), V:=(X_2,Y_3), U':=(X_4,Y_4),
V':=(X_5,Y_6)$. This is a convenient way to say that
\begin{gather*}
\law(U) = \law\bigl(U'\bigr) = \law\bigl((X,Y)\bigr) \et \law(V) =
\law\bigl(V'\bigr) = \law(X)\otimes\law(Y)
\end{gather*}
and $U,U',V$ and $V'$ are independent.\footnote{In other words: in an
expression of the form $f(X_i,X_j,Y_k,Y_l)$ all random variables are
independent if, and only if, all indices are different. As soon as two
indices coincide, we have (some kind of) dependence.}

The following formulae follow directly from
Proposition~\ref{ar-31}.\ref{ar-31-d} and Remark~\ref{ar-33}.a).
\begin{proposition}\label{gdc-08}
    Let $X$ and $Y$ be random variables with values in $\real^m$ and $\real^n$ and assume that $\mu, \nu$ are finite symmetric L\'evy measures on $\rmohne$ and $\rnohne$ with full support. Generalized distance covariance has the following representations
\begin{align}
V^2(X,Y) &= d_\rho^2\bigl(\law(U),\law(V)
\bigr)
\\
&=\label{eq:v2-ThetaUV} 2\Ee\varTheta\bigl(U-V'\bigr) - \Ee\varTheta
\bigl(U-U'\bigr) - \Ee \varTheta\bigl(V-V'\bigr)
\\
&=\notag 2\Ee\varTheta(X_1- X_5,Y_1-
Y_6) - \Ee\varTheta(X_1-X_4,Y_1-Y_4)
\\
 &\qquad\quad\mbox{} - \Ee \varTheta( X_2- X_5,
Y_3- Y_6)\label{eq:v2-ThetaXY}
\\
&= \Ee\varPhi(X_1-X_4)
\varPsi(Y_1-Y_4) + \Ee\varPhi(X_2-
X_5) \Ee\varPsi(Y_3- Y_6)\notag
\\
 &\qquad\quad\mbox{}- 2\Ee\varPhi(X_1-X_5)\label{eq:v2-PsiPhifator}
\varPsi(Y_1-Y_6).
\end{align}
\end{proposition}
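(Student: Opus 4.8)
The plan is to deduce \eqref{eq:v2-ThetaUV} from the general theory and then unwind the three $\varTheta$-expectations into $\varPhi$- and $\varPsi$-expectations by means of the product identity \eqref{eq:theta-rel}. First I would record that, by \eqref{eq:veqd}, $V^2(X,Y) = \|f_{(X,Y)} - f_X\otimes f_Y\|_{L^2(\rho)}^2 = d_\rho^2(\law(U),\law(V))$ with $U = (X_1,Y_1)$ and $V = (X_2,Y_3)$; this is the first line. Since $\rho = \mu\otimes\nu$ is a \emph{finite} measure, the associated cndf $\varTheta$ is bounded, so the moment hypothesis $\Ee\varTheta(U)+\Ee\varTheta(V)<\infty$ of Proposition~\ref{ar-31}.\ref{ar-31-d} holds trivially and every expectation appearing below is automatically finite. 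Applying \eqref{ar-e36} to the pair $(U,V)$ together with the i.i.d.\ copy $(U',V') = ((X_4,Y_4),(X_5,Y_6))$ gives \eqref{eq:v2-ThetaUV}, and substituting the coordinatewise differences $U-V' = (X_1-X_5,Y_1-Y_6)$, $U-U' = (X_1-X_4,Y_1-Y_4)$, $V-V' = (X_2-X_5,Y_3-Y_6)$ yields \eqref{eq:v2-ThetaXY}.

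For \eqref{eq:v2-PsiPhifator} I would substitute $\varTheta(x,y) = \mu(\real^m)\varPsi(y) + \nu(\real^n)\varPhi(x) - \varPhi(x)\varPsi(y)$ from \eqref{eq:theta-rel} into each of the three expectations in \eqref{eq:v2-ThetaXY} and regroup according to the factors $\mu(\real^m)$, $\nu(\real^n)$, and $1$. The terms carrying $\mu(\real^m)$ combine to $\mu(\real^m)\bigl(2\Ee\varPsi(Y_1-Y_6) - \Ee\varPsi(Y_1-Y_4) - \Ee\varPsi(Y_3-Y_6)\bigr)$; each of the three differences $Y_1-Y_6$, $Y_1-Y_4$, $Y_3-Y_6$ is a difference of two independent copies of $Y$, so the three $\Ee\varPsi$-values coincide and this contribution vanishes. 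The $\nu(\real^n)$-terms vanish for the same reason (with $\varPhi$ and the $X$'s). What remains is $\Ee[\varPhi(X_1-X_4)\varPsi(Y_1-Y_4)] + \Ee[\varPhi(X_2-X_5)\varPsi(Y_3-Y_6)] - 2\Ee[\varPhi(X_1-X_5)\varPsi(Y_1-Y_6)]$, and in the second of these the four indices $2,3,5,6$ are pairwise distinct, so $X_2-X_5$ and $Y_3-Y_6$ are independent and the expectation factors as $\Ee\varPhi(X_2-X_5)\,\Ee\varPsi(Y_3-Y_6)$; this is exactly \eqref{eq:v2-PsiPhifator}.

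The only point requiring real care — and the step I would double-check most carefully — is the dependence bookkeeping among the six i.i.d.\ copies: $X_i$ and $Y_i$ with the \emph{same} index are in general dependent, which is why the first and last mixed expectations do \emph{not} factorize, whereas an expectation whose $X$-indices are disjoint from its $Y$-indices does split by independence, and why a difference $X_i-X_j$ (resp.\ $Y_i-Y_j$) with $i\neq j$ has a law depending only on $\law(X)$ (resp.\ $\law(Y)$). All the interchanges of expectation and integration used along the way are justified by Tonelli's theorem, since $\varPhi,\varPsi,\varTheta\geq 0$ and are bounded. If one prefers the symmetric form $\Ee\varTheta(U-V')+\Ee\varTheta(V-U')$ instead of $2\Ee\varTheta(U-V')$, this is Remark~\ref{ar-33}.a), but it is not needed here.
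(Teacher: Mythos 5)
Your proposal is correct and follows essentially the same route as the paper: the first two identities are exactly the application of \eqref{eq:veqd} and Proposition~\ref{ar-31}.\ref{ar-31-d} (with the moment condition trivially satisfied since $\rho=\mu\otimes\nu$ is finite, hence $\varTheta$ bounded), and the last identity is obtained, as in the text, by substituting \eqref{eq:theta-rel} and observing that the single-variable terms cancel because the $(X_i,Y_i)$ are i.i.d.\ while the middle product factors because its index sets are disjoint. Your writeup merely spells out the cancellation and factorization bookkeeping that the paper leaves implicit.
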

The latter equality follows from \eqref{eq:theta-rel} since the terms
depending only on one of the variables cancel as the random variables
$(X_i,Y_i)$ are i.i.d. This gives rise to various further representations of
$V(X,Y)$.
\begin{corollary}\label{gdc-09}
    Let $(X,Y)$, $(X_i,Y_i)$, $\varPhi,\varPsi$ and $\mu,\nu$ be as in Proposition~\ref{gdc-08}. Generalized distance covariance has the following representations
\begin{align}
\notag &V^2(X,Y)
\\
&=\notag \Ee\varPhi(X_1-X_4)
\varPsi(Y_1-Y_4) - 2\Ee\varPhi(X_1-X_2)
\varPsi(Y_1-Y_3)
\\
\label{eq:v2-PsiPhi} &\qquad\quad\mbox{} + \Ee\varPhi(X_1- X_2) \Ee
\varPsi(Y_3- Y_4)
\\
&= \Ee \bigl[\varPhi(X_1-X_4)\cdot \bigl\{
\varPsi(Y_1-Y_4)-2\varPsi(Y_1-Y_3)+
\varPsi(Y_2-Y_3) \bigr\} \bigr]
\\
&=\label{eq:v2-Prod} \Ee \bigl[ \bigl\{\varPhi(X_1-X_4)-
\varPhi(X_4-X_2) \bigr\} \cdot \bigl\{
\varPsi(Y_4-Y_1)-\varPsi(Y_1-Y_3)
\bigr\} \bigr]
\\
&=\notag \Ee \bigl[ \bigl\{\varPhi(X_1-X_4)-\Ee
\bigl(\varPhi(X_4-X_1) \mid X_4\bigr) \bigr
\}
\\
&\qquad\quad\mbox{}\cdot \bigl\{\varPsi(Y_4-Y_1)-
\Ee\bigl(\varPsi(Y_1-Y_4) \mid Y_1\bigr)
\bigr\} \bigr].\label{eq:v2-cond}
\end{align}
\end{corollary}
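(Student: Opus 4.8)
The plan is to derive all four displayed representations in Corollary~\ref{gdc-09} from the three representations already established in Proposition~\ref{gdc-08}, using only elementary algebraic manipulations and the fact that the six copies $(X_i,Y_i)$, $i=1,\dots,6$, are i.i.d., so that any factor $\varPhi(X_i-X_j)$ or $\varPsi(Y_k-Y_l)$ whose index set is disjoint from the remaining factors can be split off as an independent expectation, and moreover relabelled freely (e.g. $\Ee\varPhi(X_1-X_2)=\Ee\varPhi(X_i-X_j)$ for any $i\neq j$). First I would take the product representation \eqref{eq:v2-PsiPhifator},
\begin{gather*}
V^2(X,Y) = \Ee\varPhi(X_1-X_4)\varPsi(Y_1-Y_4) + \Ee\varPhi(X_2-X_5)\Ee\varPsi(Y_3-Y_6) - 2\Ee\varPhi(X_1-X_5)\varPsi(Y_1-Y_6),
\end{gather*}
and merely rename indices in the last two terms: since $X_2,X_5$ are independent we may write $\Ee\varPhi(X_2-X_5)=\Ee\varPhi(X_1-X_2)$ and similarly $\Ee\varPsi(Y_3-Y_6)=\Ee\varPsi(Y_3-Y_4)$; and in the mixed term the only constraint is that the two $X$-indices are distinct and the two $Y$-indices are distinct and share exactly one index with the $X$-pair, so $\Ee\varPhi(X_1-X_5)\varPsi(Y_1-Y_6)=\Ee\varPhi(X_1-X_2)\varPsi(Y_1-Y_3)$. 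This produces \eqref{eq:v2-PsiPhi}.

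Next I would obtain the single-expectation forms. For the representation after \eqref{eq:v2-PsiPhi}, I would condition \eqref{eq:v2-PsiPhi} on the sigma-algebra generated by $X_1,X_4$ and all the $Y$-variables, or more directly: pull $\varPhi(X_1-X_4)$ out front by rewriting the three terms so that each carries exactly the factor $\varPhi(X_1-X_4)$ after an appropriate relabelling of the $X$-indices in the second and third terms (legitimate because those $\varPhi$-factors are functions of two distinct, otherwise-free $X$-indices), and then use independence of the $X$'s from the $Y$'s to turn $\Ee\varPhi(X_1-X_4)\cdot(\text{stuff in }Y)$ into $\Ee[\varPhi(X_1-X_4)\{\varPsi(Y_1-Y_4)-2\varPsi(Y_1-Y_3)+\varPsi(Y_2-Y_3)\}]$; here the three $\varPsi$-terms match up with the three terms of \eqref{eq:v2-PsiPhi} once one checks the index patterns are consistent. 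For the bilinear form \eqref{eq:v2-Prod} I would expand the product $\{\varPhi(X_1-X_4)-\varPhi(X_4-X_2)\}\{\varPsi(Y_4-Y_1)-\varPsi(Y_1-Y_3)\}$ into four terms, use the evenness of $\varPhi$ and $\varPsi$ (which holds since $\mu,\nu$ are symmetric, so $\varPhi(-x)=\varPhi(x)$), and identify the four resulting expectations with the terms of \eqref{eq:v2-PsiPhifator}/\eqref{eq:v2-PsiPhi} — noting in particular that the cross term $\Ee\varPhi(X_4-X_2)\varPsi(Y_1-Y_3)$ factors as $\Ee\varPhi(X_4-X_2)\Ee\varPsi(Y_1-Y_3)$ because the index set $\{2,4\}$ of the $\varPhi$-part is disjoint from $\{1,3\}$ of the $\varPsi$-part. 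Finally, \eqref{eq:v2-cond} follows from \eqref{eq:v2-Prod} by the observation that, by the tower property and independence, $\Ee(\varPhi(X_4-X_1)\mid X_4)$ has the same effect inside the outer expectation as the constant $\Ee\varPhi(X_4-X_1)$, which is exactly what the term $\varPhi(X_4-X_2)$ contributes after integrating out $X_2$; symmetrically for $\Ee(\varPsi(Y_1-Y_4)\mid Y_1)$ versus $\varPsi(Y_1-Y_3)$.

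The main obstacle is purely bookkeeping: one must verify, for each of the four claimed identities, that the index patterns of the copies $(X_i,Y_i)$ appearing in the expanded expressions genuinely fall into the permissible relabelling classes — i.e. that whenever two factors are claimed to decouple into a product of expectations their index sets are disjoint, and whenever a relabelling is performed the constraint "all indices distinct $\iff$ full independence, shared index $\iff$ the relevant dependence" (the rule stated in the footnote preceding Proposition~\ref{gdc-08}) is respected. There is no analytic difficulty here since $\varPhi,\varPsi,\varTheta$ are bounded, so all expectations and all applications of Fubini/Tonelli are automatically justified; the only care needed is to carry out the relabellings consistently so that the $-2$ coefficients and the evenness-induced sign changes come out right.
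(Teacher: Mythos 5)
Your proposal is correct and follows essentially the same route the paper takes (the paper leaves the proof of Corollary~\ref{gdc-09} implicit, asserting that it follows from the representation \eqref{eq:v2-PsiPhifator} of Proposition~\ref{gdc-08}): relabelling of the i.i.d.\ copies, factorization of expectations over disjoint index sets, evenness of $\varPhi$ and $\varPsi$, and replacement of an integrated-out fresh copy by the corresponding conditional expectation. All the index-pattern checks you flag as the main bookkeeping obstacle do go through, so nothing is missing.
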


Corollary~\ref{gdc-09} shows, in particular, that $V(X,Y)$ can be written as
a function of $(X_i,Y_i)$, $i=1,\dots, 4$,
\begin{equation}
\label{eq:v2-g} V^2(X,Y) = \Ee \bigl[g \bigl((X_1,Y_1),
\dots,(X_4,Y_4) \bigr) \bigr]
\end{equation}
for an appropriate function $g$; for instance, the formula
\eqref{eq:v2-PsiPhi} follows for
\begin{align*}
 &g\bigl((x_1,y_1),\dots,(x_4,y_4)
\bigr)
\\
&= \varPhi(x_1-x_4)\varPsi(y_1-y_4)-2
\varPhi(x_1-x_2)\varPsi(y_1-y_3)
+ \varPhi(x_1- x_2) \varPsi(y_3-
y_4).
\end{align*}

\begin{corollary}\label{gdc-11}
    Let $(X,Y)$, $(X_i,Y_i)$, $\varPhi,\varPsi$ and $\mu,\nu$ be as in Proposition~\ref{gdc-08} and write
\begin{align*}
\doverline{\varPhi} &= \varPhi(X_1-X_4)- \Ee\bigl(
\varPhi(X_4-X_1) \mid X_4\bigr)
\\
&\quad\qquad\mbox{} - \Ee\bigl(\varPhi(X_4-X_1) \mid
X_1\bigr) + \Ee \varPhi\xch{(X_1-X_4),}{(X_1-X_4)}
\\
\doverline{\varPsi} &= \varPsi(Y_1-Y_4)- \Ee\bigl(
\varPsi(Y_4-Y_1) \mid Y_4\bigr)
\\
&\quad\qquad\mbox{} - \Ee\bigl(\varPsi(Y_4-Y_1) \mid
Y_1\bigr) + \Ee \varPsi\xch{(Y_1-Y_4),}{(Y_1-Y_4).}
\end{align*}
    for the ``doubly centered'' versions of $\varPhi(X_1-X_4)$ and $\varPsi(Y_1-Y_4)$.
    Generalized distance covariance has the following representation
\begin{equation}
\label{eq:Vdoublecentered} V^2(X,Y) = \Ee [\,\doverline{\varPhi}\cdot\doverline{
\varPsi}\, ].
\end{equation}
\end{corollary}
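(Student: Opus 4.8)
The plan is to build on the representation \eqref{eq:v2-cond} from Corollary~\ref{gdc-09}, which already expresses $V^2(X,Y)$ as the expectation of a product of \emph{singly} centered quantities, and then to check that replacing those factors by the doubly centered $\doverline\varPhi$, $\doverline\varPsi$ only adds terms of expectation $0$. First I would record two elementary facts. Both $\varPhi$ and $\varPsi$ are even functions, being real-valued cndfs of the form $\int (1-\cos\scalp{x}{s})\,\mu(\dds)$, cf.\ \eqref{eq:PsiPhi}; hence $\varPhi(X_1-X_4)=\varPhi(X_4-X_1)$ and likewise for $\varPsi$. Moreover, since the pairs $(X_i,Y_i)$ are i.i.d., the two conditional expectations $\Ee(\varPhi(X_1-X_4)\mid X_4)$ and $\Ee(\varPhi(X_1-X_4)\mid X_1)$ are one and the same measurable function, say $a(\cdot)$, evaluated at $X_4$ resp.\ at $X_1$; I also put $\bar a:=\Ee\varPhi(X_1-X_4)$ and introduce $b(\cdot)$, $\bar b$ for $\varPsi$ in the same way. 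With this notation
\[
  \doverline\varPhi=\bigl[\varPhi(X_1-X_4)-a(X_4)\bigr]-\bigl[a(X_1)-\bar a\bigr],
  \qquad
  \doverline\varPsi=\bigl[\varPsi(Y_1-Y_4)-b(Y_1)\bigr]-\bigl[b(Y_4)-\bar b\bigr].
\]

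Next I would multiply out $\doverline\varPhi\cdot\doverline\varPsi$ into the obvious four summands and take expectations term by term. The leading term $\Ee\bigl[(\varPhi(X_1-X_4)-a(X_4))(\varPsi(Y_1-Y_4)-b(Y_1))\bigr]$ is precisely $V^2(X,Y)$ by \eqref{eq:v2-cond} (using that $\varPsi$ is even). For the mixed term $\Ee\bigl[(\varPhi(X_1-X_4)-a(X_4))(b(Y_4)-\bar b)\bigr]$ I would condition on $(X_4,Y_4)$ and use that $(X_1,Y_1)$ is independent of $(X_4,Y_4)$, so $X_1\perp(X_4,Y_4)$ and the inner conditional expectation of the first factor equals $a(X_4)-a(X_4)=0$. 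The mixed term $\Ee\bigl[(a(X_1)-\bar a)(\varPsi(Y_1-Y_4)-b(Y_1))\bigr]$ vanishes symmetrically, on conditioning on $(X_1,Y_1)$ and using $Y_4\perp(X_1,Y_1)$. Finally $\Ee\bigl[(a(X_1)-\bar a)(b(Y_4)-\bar b)\bigr]$ factorizes because $X_1\perp Y_4$, and each factor is centered since $\Ee a(X_1)=\Ee a(X_4)=\bar a$ (as $X_1\stackrel{d}{=}X_4$). Adding the four contributions yields $\Ee[\doverline\varPhi\cdot\doverline\varPsi]=V^2(X,Y)$, which is \eqref{eq:Vdoublecentered}.

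I do not expect a genuine obstacle. Since $\mu$ and $\nu$ are finite, $\varPhi$ and $\varPsi$ are bounded, so all expectations and conditional expectations are automatically well defined and the tower-property and Fubini manipulations above are licit. The only point demanding a little care is the bookkeeping of which of the $X_i$, $Y_j$ are mutually independent; this is governed by the convention, recorded just before Proposition~\ref{gdc-08}, that such random variables are independent precisely when their indices are distinct, and is entirely routine. One could alternatively avoid \eqref{eq:v2-cond} and expand $\doverline\varPhi\cdot\doverline\varPsi$ directly against one of the representations in Proposition~\ref{gdc-08}, but the route through the already-centered formula \eqref{eq:v2-cond} is the shortest.
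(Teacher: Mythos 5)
Your proposal is correct and follows essentially the same route as the paper: both start from the singly centered representation \eqref{eq:v2-cond}, write $\doverline\varPhi$ and $\doverline\varPsi$ as the singly centered factors minus an additional correction ($a(X_1)-\bar a$ resp.\ $b(Y_4)-\bar b$, which is exactly the paper's $\Ee(\overline\varPhi\mid X_1)$ resp.\ $\Ee(\overline\varPsi\mid Y_4)$), expand the product into four terms, and kill the three extra terms by conditioning/independence of the appropriately indexed variables. The bookkeeping of evenness of $\varPhi,\varPsi$ and of which indices are independent is handled correctly, so nothing is missing.
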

\begin{proof}
Denote by $\overline{\varPhi} = \varPhi(X_1-X_4)-\Ee(\varPhi(X_4-X_1) \mid
X_4)$ and $\overline{\varPsi} = \varPsi(Y_1-Y_4)-\Ee(\varPsi(Y_4-Y_1) \mid
Y_1)$ the centered random variables appearing in \eqref{eq:v2-cond}. Clearly,
\begin{gather*}
\doverline\varPhi = \overline\varPhi - \Ee(\overline\varPhi \mid
X_1) \quad\text{and}\quad \doverline\varPsi = \overline\varPsi - \Ee(
\overline\varPsi \mid Y_4).
\end{gather*}
Thus,
\begin{align*}
\Ee [\doverline\varPhi\cdot\doverline\varPsi ] &= \Ee \bigl[ \bigl(\overline
\varPhi - \Ee(\overline\varPhi \mid X_1) \bigr) \cdot \bigl(\overline
\varPsi - \Ee(\overline\varPsi \mid Y_4) \bigr) \bigr]
\\
&= \Ee [\overline\varPhi\cdot\overline\varPsi ] + \Ee \bigl[\Ee(\overline\varPhi
\mid X_1)\cdot\Ee(\overline\varPsi \mid Y_4) \bigr]
\\
&\qquad\mbox{} - \Ee \bigl[\Ee(\overline\varPhi \mid X_1)\cdot
\overline\varPsi \bigr] - \Ee \bigl[\overline\varPhi\cdot\Ee(\overline\varPsi \mid
Y_4) \bigr].
\end{align*}
Since $X_1$ and $Y_4$ are independent, we have
\begin{gather*}
\Ee \bigl[\Ee(\overline\varPhi \mid X_1)\cdot\Ee(\overline\varPsi
\mid Y_4) \bigr] = \Ee \bigl[\Ee(\overline\varPhi \mid
X_1) \bigr]\cdot\Ee \bigl[\Ee(\overline\varPsi \mid Y_4)
\bigr] = \Ee [\overline\varPhi ]\cdot\Ee [\overline\varPsi ] = 0.
\end{gather*}
Using the tower property and the independence of $(X_1,Y_1)$ and $Y_4$ we get
\begin{align*}
\Ee \bigl[\Ee(\overline\varPhi \mid X_1)\cdot\overline\varPsi \bigr]
&= \Ee \bigl[ \Ee \bigl[\Ee(\overline\varPhi \mid X_1) \mid
Y_1,Y_4 \bigr] \cdot\overline\varPsi \bigr]
\\
&= \Ee \bigl[ \Ee \bigl[\Ee(\overline\varPhi \mid X_1) \mid
Y_1 \bigr] \cdot\overline\varPsi \bigr] = \xch{0,}{0}
\end{align*}
where we use, for the last equality, that $\overline\varPsi$ is orthogonal to
the $L^2$-space of $Y_1$-measurable functions. In a similar fashion we see
$\Ee [\overline\varPhi\cdot\Ee(\overline\varPsi \mid Y_4) ]=0$, and the
assertion follows because of \eqref{eq:v2-cond}.
\end{proof}

In Section \ref{sec:estimation} we will encounter further representations of
the generalized distance covariance if $X$ and $Y$ have discrete
distributions with finitely many values, as it is the case for empirical
distributions.

\subsection{Generalized distance covariance with arbitrary L\'evy measures}\label{sec:gdc-infinite}
So far, we have been considering finite L\'evy measures $\mu,\nu$ and bounded
cndfs $\varPhi$ and $\varPsi$ \eqref{eq:PsiPhi}. We will now extend our
considerations to products of unbounded L\'evy measures. The measure $\rho :=
\mu\otimes\nu$ satisfies the integrability condition
\begin{equation}
\label{gdc-e22} \iint\limits
_{\real^{m+n}} \!\!\bigl(1\wedge|x|^2\bigr) \bigl(1
\wedge|y|^2\bigr)\,\rho(\ddx,\ddy) = \!\!\int\limits
_{\real^m} \!\!\bigl(1
\wedge|x|^2\bigr)\,\mu(\ddx) \!\!\int\limits
_{\real^n} \!\! \bigl(1
\wedge|y|^2\bigr)\,\nu(\ddy) < \infty.
\end{equation}
Other than in the case of finite marginals, $\rho$ is no longer a L\'evy
measure, see the footnote on page~\pageref{sec:gdc-finite}. Thus, the
function $\varTheta$ defined in \eqref{eq:PsiPhi} need not be a cndf and we
cannot directly apply Proposition~\ref{ar-31}; instead we need the following
result ensuring the finiteness of $V(X,Y)$.
\begin{lemma} \label{thm:v2bound}
    Let $X, X'$ be i.i.d.  random variables on $\real^m$ and $Y, Y'$ be i.i.d.  random variables on $\real^n$; $\mu$ and $\nu$ are symmetric L\'evy measures on $\real^m$ and $\real^n$ with full support and with corresponding cndfs $\varPhi$ and $\varPsi$ as in \eqref{eq:PsiPhi}. Then
\begin{equation}
\label{eq:v2bound} V^2(X,Y) \leq \Ee\varPhi\bigl(X-X'
\bigr) \cdot \Ee\varPsi\bigl(Y-Y'\bigr) \leq 16\, \Ee\varPhi(X)
\cdot \Ee\varPsi(Y).
\end{equation}
\end{lemma}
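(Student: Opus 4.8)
The plan is to treat the two inequalities in \eqref{eq:v2bound} separately: the right-hand one will follow directly from an earlier result, and the left-hand one from a pointwise Cauchy--Schwarz estimate under the integral sign. The one identity I want to have ready is
\[
\Ee\varPhi\bigl(X-X'\bigr) = \int_{\real^m}\bigl(1-|f_X(s)|^2\bigr)\,\mu(\dds),
\]
which is the version of \eqref{eq:Etheta} for a possibly unbounded symmetric L\'evy measure: since the integrand of $\varPhi$ is nonnegative, Tonelli's theorem together with the symmetry of $X-X'$ (so that $\Ee\cos\scalp{s}{(X-X')}=|f_X(s)|^2$) gives the identity in $[0,\infty]$, and analogously $\Ee\varPsi(Y-Y')=\int_{\real^n}(1-|f_Y(t)|^2)\,\nu(\ddt)$.

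For the right-hand inequality I would invoke Proposition~\ref{ar-31}.\ref{ar-31-b} twice: applied to the $m$-dimensional symmetric L\'evy measure $\mu$, its cndf $\varPhi$, and the i.i.d.\ pair $(X,X')$, it gives $\Ee\varPhi(X-X')\le 4\,\Ee\varPhi(X)$ (valid in $[0,\infty]$ without any moment assumption, by the monotone-convergence argument used in the proof of that proposition); applied to $\nu,\varPsi,(Y,Y')$ it gives $\Ee\varPsi(Y-Y')\le 4\,\Ee\varPsi(Y)$. Multiplying the two bounds yields $\Ee\varPhi(X-X')\,\Ee\varPsi(Y-Y')\le 16\,\Ee\varPhi(X)\,\Ee\varPsi(Y)$.

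For the left-hand inequality, fix $s\in\real^m$, $t\in\real^n$ and set $Z:=\ee^{\ii\scalp{s}{X}}$ and $W:=\ee^{-\ii\scalp{t}{Y}}$; these are bounded complex random variables on the same probability space with $|Z|=|W|=1$. A short computation, using $\overline{\Ee[W]}=\overline{f_Y(-t)}=f_Y(t)$, shows that $f_{(X,Y)}(s,t)-f_X(s)f_Y(t)=\Ee[Z\overline W]-\Ee[Z]\,\overline{\Ee[W]}=\Cov(Z,W)$. The Cauchy--Schwarz inequality for the centred variables $Z-\Ee[Z]$ and $W-\Ee[W]$ in $L^2(\Pp)$ then yields the pointwise bound
\[
\bigl|f_{(X,Y)}(s,t)-f_X(s)f_Y(t)\bigr|^2 \le \bigl(\Ee|Z|^2-|\Ee[Z]|^2\bigr)\bigl(\Ee|W|^2-|\Ee[W]|^2\bigr) = \bigl(1-|f_X(s)|^2\bigr)\bigl(1-|f_Y(t)|^2\bigr).
\]
Integrating this nonnegative inequality against $\mu(\dds)\,\nu(\ddt)$ and factorising the product measure by Tonelli's theorem,
\[
V^2(X,Y)\le\int_{\real^m}\bigl(1-|f_X(s)|^2\bigr)\,\mu(\dds)\cdot\int_{\real^n}\bigl(1-|f_Y(t)|^2\bigr)\,\nu(\ddt)=\Ee\varPhi\bigl(X-X'\bigr)\cdot\Ee\varPsi\bigl(Y-Y'\bigr)
\]
by the identity from the first paragraph. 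Together with the previous step this proves \eqref{eq:v2bound}.

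The only delicate point is the conjugate bookkeeping in the last step: one must take $W=\ee^{-\ii\scalp{t}{Y}}$ (not $\ee^{\ii\scalp{t}{Y}}$), so that $\Cov(Z,W)$ is \emph{exactly} the difference of characteristic functions (with $f_Y$ appearing unconjugated) and the factor $\Ee|W|^2-|\Ee[W]|^2$ equals $1-|f_Y(t)|^2$. Beyond that, no integrability hypothesis other than those already in the statement is required, because every integrand is nonnegative and all manipulations are carried out in $[0,\infty]$. If one prefers to avoid complex covariances, the same bound can be obtained by first truncating $\mu,\nu$ to finite L\'evy measures $\mu_\varepsilon,\nu_\varepsilon$ (so that $\mu_\varepsilon\otimes\nu_\varepsilon$ is again a finite L\'evy measure), applying the representations of Proposition~\ref{gdc-08}, and letting $\varepsilon\to0$ by monotone convergence.
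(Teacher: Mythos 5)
Your proof is correct and follows essentially the same route as the paper: a pointwise Cauchy--Schwarz bound $|f_{(X,Y)}(s,t)-f_X(s)f_Y(t)|^2\le(1-|f_X(s)|^2)(1-|f_Y(t)|^2)$ (the paper writes the left side as $|\Ee[(\ee^{\ii\scalp{X}{s}}-f_X(s))(\ee^{\ii\scalp{Y}{t}}-f_Y(t))]|^2$, which is the same covariance up to conjugate bookkeeping), followed by Tonelli, the identity \eqref{eq:Etheta} giving $\int(1-|f_X|^2)\,\dd\mu=\Ee\varPhi(X-X')$, and the bound $\Ee\varPhi(X-X')\le 4\Ee\varPhi(X)$ from Proposition~\ref{ar-31}.\ref{ar-31-b}. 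No gaps.
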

\begin{proof}
    Following Sz\'ekely et al.~\cite[p.~2772]{SzekRizzBaki2007} we get
\begin{align}
\notag |f_{X,Y}(s,t) - f_X(s)f_Y(t)|^2
&= \bigl\llvert \Ee \bigl(\bigl(\ee^{\ii \scalp{X}{s}}-f_X(s)\bigr)
\bigl(\ee^{\ii \scalp{Y}{t}}-f_Y(t)\bigr) \bigr) \bigr\rrvert
^2
\\
\notag &\leq \bigl[\Ee \bigl(|\ee^{\ii \scalp{X}{s}}-f_X(s)||
\ee^{\ii \scalp{Y}{t}}-f_Y(t)| \bigr) \bigr]^2
\\
 &\leq \Ee \bigl[|\ee^{\ii \scalp{X}{s}}-f_X(s)|^2
\bigr] \cdot \Ee \bigl[|\ee^{\ii \scalp{Y}{t}}-f_Y(t)|^2
\bigr]\label{gdc-e24}
\end{align}
    and
\begin{align} \notag
\Ee \bigl[|\ee^{\ii \scalp{X}{s}}-f_X(s)|^2
\bigr] &= \Ee \bigl[\bigl(\ee^{\ii sX}-f_X(s)\bigr) \bigl(
\ee^{-\ii sX}-\overline{f_X(s)}\bigr) \bigr]
\\
&= 1- |f_X(s)|^2 = 1-f_X(s)
\overline{f_{X}(s)}. \end{align}
    Using \eqref{eq:Etheta} for $\rho = \mu$, $\varTheta=\varPhi$, $U=X$ and \eqref{eq:EPsi-new} for $\varTheta=\varPhi$, $U=X$, $V=X'$ shows
\begin{gather*}
\int_{\real^m}\Ee \bigl[|\ee^{\ii \scalp{X}{s}}-f_X(s)|^2
\bigr]\,\mu(\dds) = \Ee\varPhi\bigl(X-X'\bigr) \leq 4\Ee\varPhi(X),
\end{gather*}
    and an analogous argument for $\nu$ and $Y$ yields the bound \eqref{eq:v2bound}.
\end{proof}

Looking at the various representations
\eqref{eq:v2-PsiPhifator}--\eqref{eq:v2-cond} of $V(X,Y)$ it is clear that
these make sense as soon as all expectations in these expressions are finite,
i.e. some moment condition in terms of $\varPhi$ and $\varPsi$ should be
enough to ensure the finiteness of $V(X,Y)$ and all terms appearing in the
respective representations.

In order to use the results of the previous section we fix $\epsilon>0$ and
consider the \emph{finite} symmetric L\'evy measures
\begin{equation}
\label{eq:munu-trunc} \mu_\epsilon(\dds):= \frac{|s|^2}{\epsilon^2+|s|^2}\,\mu(\dds) \et
\nu_\epsilon(\ddt):= \frac{|t|^2}{\epsilon^2+|t|^2}\,\nu(\ddt),
\end{equation}
and the corresponding cndfs $\varPhi_\epsilon$ and $\varPsi_\epsilon$ given
by \eqref{eq:PsiPhi}; the product measure $\rho_\epsilon :=
\mu_\epsilon\otimes\nu_\epsilon$ is a finite L\'evy measure and the
corresponding cndf $\varTheta_\epsilon$ is also bounded (it can be expressed
by $\varPhi_\epsilon$ and $\varPsi_\epsilon$ through the formula
\eqref{eq:theta-rel}).

This allows us to derive the representations
\eqref{eq:v2-PsiPhifator}--\eqref{eq:v2-cond} for each $\epsilon>0$ and with
$\varPhi_\epsilon$ and $\varPsi_\epsilon$. Since we have $\varPhi =
\sup_{\epsilon}\varPhi_\epsilon$ and $\varPsi =
\sup_{\epsilon}\varPsi_\epsilon$, we can use monotone convergence to get the
representations for the cndfs $\varPhi$ and $\varPsi$ with L\'evy measures
$\mu$ and $\nu$, respectively. Of course, this requires the existence of
certain (mixed) $\varPhi$-$\varPsi$ moments of the random variables $(X,Y)$.

\begin{theorem}\label{gdc-21}
    Let $\mu$ and $\nu$ be symmetric L\'evy measures on $\rmohne$ and $\rnohne$ with full support and corresponding cndfs $\varPhi$ and $\varPsi$ given by \eqref{eq:PsiPhi}. For any random vector $(X,Y)$ with values in $\real^{m+n}$ satisfying the following moment condition
\begin{equation}
\label{eq:moments1} \Ee\varPhi(X) + \Ee\varPsi(Y) < \xch{\infty,}{\infty}
\end{equation}
    the generalized distance correlation $V(X,Y)$ is finite. If additionally
\begin{equation}
\label{eq:moments2} \Ee \bigl(\varPhi(X)\varPsi(Y) \bigr) < \infty,
\end{equation}
     then also the representations \eqref{eq:v2-PsiPhifator}--\eqref{eq:v2-cond} hold with all terms finite.\footnote{As before, we denote in these formulae by $(X_i,Y_i)$, $i=1,\dots,6$, i.i.d.  copies of $(X,Y)$.}
\end{theorem}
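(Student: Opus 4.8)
The finiteness of $V(X,Y)$ under \eqref{eq:moments1} is immediate: since $\varPhi,\varPsi\geq 0$, condition \eqref{eq:moments1} forces $\Ee\varPhi(X)<\infty$ and $\Ee\varPsi(Y)<\infty$ separately, so Lemma~\ref{thm:v2bound} yields $V^2(X,Y)\leq 16\,\Ee\varPhi(X)\cdot\Ee\varPsi(Y)<\infty$. For the representations under the additional hypothesis \eqref{eq:moments2}, I would first record a moment bound that is used throughout. Let $(X_i,Y_i)$, $i=1,\dots,6$, be i.i.d.\ copies of $(X,Y)$. Since $\varPhi,\varPsi$ are even (their L\'evy measures are symmetric) and sub-additive in the sense of \eqref{ar-e11}, one has $\varPhi(X_i-X_j)\,\varPsi(Y_k-Y_l)\leq 4\sum_{a\in\{i,j\}}\sum_{b\in\{k,l\}}\varPhi(X_a)\,\varPsi(Y_b)$, and taking expectations $\Ee[\varPhi(X_a)\varPsi(Y_b)]$ equals $\Ee[\varPhi(X)\varPsi(Y)]<\infty$ by \eqref{eq:moments2} when $a=b$ and $\Ee\varPhi(X)\cdot\Ee\varPsi(Y)<\infty$ by independence and \eqref{eq:moments1} when $a\neq b$. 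Hence $\Ee[\varPhi(X_i-X_j)\varPsi(Y_k-Y_l)]<\infty$ for all indices; conditioning before applying \eqref{ar-e11} yields the same for the mixed terms such as $\Ee[\varPhi(X_i-X_j)\,\Ee(\varPsi(Y_k-Y_l)\mid Y_k)]$, and the random variables $\Ee(\varPhi(X_4-X_1)\mid X_4)$, $\Ee(\varPsi(Y_1-Y_4)\mid Y_1)$ are integrable, their expectations being $\Ee\varPhi(X-X')\leq 4\Ee\varPhi(X)$ and $\Ee\varPsi(Y-Y')\leq 4\Ee\varPsi(Y)$ by Proposition~\ref{ar-31}.\ref{ar-31-b}.

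Next I would carry out the truncation sketched before the theorem. The measures $\mu_\epsilon,\nu_\epsilon$ of \eqref{eq:munu-trunc} are finite symmetric L\'evy measures with full support (the density $|s|^2/(\epsilon^2+|s|^2)$ is strictly positive off the origin), hence $\rho_\epsilon=\mu_\epsilon\otimes\nu_\epsilon$ is a finite symmetric L\'evy measure with full support and Proposition~\ref{gdc-08} and Corollary~\ref{gdc-09} apply to the bounded cndfs $\varPhi_\epsilon,\varPsi_\epsilon$. As $\epsilon\downarrow 0$ the densities increase to $1$, so $\varPhi_\epsilon\uparrow\varPhi$, $\varPsi_\epsilon\uparrow\varPsi$ pointwise and the generalized distance covariance $V_\epsilon^2(X,Y)$ formed from $\rho_\epsilon$ increases to $V^2(X,Y)$ by monotone convergence in \eqref{eq:v2-def}, since the density of $\rho_\epsilon$ with respect to $\rho=\mu\otimes\nu$ equals $\tfrac{|s|^2}{\epsilon^2+|s|^2}\cdot\tfrac{|t|^2}{\epsilon^2+|t|^2}\uparrow 1$. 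Writing \eqref{eq:v2-PsiPhifator} for $\rho_\epsilon$,
\[
V_\epsilon^2(X,Y)=\Ee\bigl[\varPhi_\epsilon(X_1-X_4)\varPsi_\epsilon(Y_1-Y_4)\bigr]+\Ee\varPhi_\epsilon(X_2-X_5)\,\Ee\varPsi_\epsilon(Y_3-Y_6)-2\,\Ee\bigl[\varPhi_\epsilon(X_1-X_5)\varPsi_\epsilon(Y_1-Y_6)\bigr],
\]
each of the three terms on the right converges monotonically (by monotone convergence, since $\varPhi_\epsilon\varPsi_\epsilon\uparrow\varPhi\varPsi$) to the corresponding term with $\varPhi,\varPsi$, which is finite by the moment bound above. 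Letting $\epsilon\downarrow 0$ gives \eqref{eq:v2-PsiPhifator} with all terms finite; \eqref{eq:v2-PsiPhi} follows in the same way, or by relabelling i.i.d.\ indices in \eqref{eq:v2-PsiPhifator} (legitimate once all terms are finite).

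The remaining representations of Corollary~\ref{gdc-09} (the bracketed form and the product-of-differences forms \eqref{eq:v2-Prod} and \eqref{eq:v2-cond}) are not monotone in $\epsilon$, so rather than take limits in them directly I would derive them from \eqref{eq:v2-PsiPhi} by the purely algebraic manipulations of Corollary~\ref{gdc-09}: expand the bracket or the product of differences, match each resulting expectation with one of the three terms of \eqref{eq:v2-PsiPhi} by relabelling indices, and---for \eqref{eq:v2-cond}---replace a fully independent factor $\varPhi(X_4-X_2)$, resp.\ $\varPsi(Y_1-Y_3)$, by its conditional expectation given $X_4$, resp.\ $Y_1$, which does not change the value since that variable occurs nowhere else in the expression. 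Every expectation appearing is of a type bounded in the first paragraph, so all rearrangements are justified. I expect the main obstacle to be the bookkeeping of this last step: one must verify that each of the finitely many signed expectation terms in each representation is absolutely convergent---this is precisely where \eqref{eq:moments2} is needed, for the ``diagonal'' contributions $\Ee[\varPhi(X)\varPsi(Y)]$, while \eqref{eq:moments1} handles the factorised ``off-diagonal'' ones---and that no non-monotone quantity is passed to the limit before being reduced to monotone building blocks or to a finite algebraic combination of them.
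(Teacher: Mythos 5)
Your proposal is correct and follows essentially the same route as the paper: finiteness of $V(X,Y)$ via Lemma~\ref{thm:v2bound}, then the truncation \eqref{eq:munu-trunc} together with monotone convergence, with \eqref{eq:moments2} (plus \eqref{eq:moments1} and the subadditivity \eqref{ar-e11}) guaranteeing that the limiting expectations are finite. You merely make explicit two points the paper leaves implicit~-- the bound $\Ee[\varPhi(X_i-X_j)\varPsi(Y_k-Y_l)]<\infty$ for all index patterns, and the reduction of the non-monotone representations to the monotone one by index relabelling~-- both of which are sound.
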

\begin{proof}
    We only have to check the finiteness. Using Lemma~\ref{thm:v2bound}, we see that \eqref{eq:moments1} guarantees that $V(X,Y)<\infty$.
    The finiteness of (all the terms appearing in) the representations \eqref{eq:v2-PsiPhifator}--\eqref{eq:v2-cond} follows from the monotone convergence argument, since the moment condition \eqref{eq:moments2} ensures the finiteness of the limiting expectations.
\end{proof}

\begin{remark}\label{gdc-23}
  a)      Using the H\"{o}lder inequality the following condition implies \eqref{eq:moments1} and \eqref{eq:moments2}:
\begin{equation}
\label{eq:moments-2} \Ee\varPhi^p(X) + \Ee\varPsi^q(Y) < \infty
\quad\text{for some $p,q>1$ with $\frac{1}p+\frac{1}q = 1$}.
\end{equation}
    If one of $\varPsi$ or $\varPhi$ is bounded then \eqref{eq:moments1} implies \eqref{eq:moments2}, and if both are bounded then the expectations are trivially finite.

    Since continuous negative definite functions grow at most quadratically, we see that  \eqref{eq:moments1} and \eqref{eq:moments2} also follow if
\begin{equation}
\label{eq:moments-3} \Ee |X|^{2p} + \Ee|Y|^{2q} < \infty \quad
\text{for some $p,q>1$ with $\frac{1}p+\frac{1}q = 1$}.
\end{equation}

b)      A slightly different set-up was employed by Lyons \cite{Lyon2013}: If the cndfs $\varPhi$ and $\varPsi$ are subadditive, then the expectation in \eqref{eq:v2-Prod} is finite. This is a serious restriction on the class of cndfs since subadditivity means that $\varPhi$ and $\varPsi$ can grow at most linearly at infinity, whereas general cndfs grow at most quadratically, cf.~\eqref{ar-e12}. Note, however, that square roots of real cndfs are always subadditive, cf.~\eqref{ar-e10}.
\end{remark}

\section{Estimating generalized distance covariance}\label{sec:estimation}

Let $(x_i,y_i)_{i=1,\dots,N}$ be a sample of $(X,Y)$ and denote by $(\widehat
X^{(N)},\widehat Y^{(N)})$ the random variable which has the corresponding
empirical distribution, i.e. the uniform distribution on
$(x_i,y_i)_{i=1,\dots,N}$; to be precise, repeated points will have the
corresponding multiple weight. By definition, $(\widehat X^{(N)},\widehat
Y^{(N)})$ is a bounded random variable and for i.i.d.  copies with
$\law((\widehat X_i^{(N)},\widehat Y_i^{(N)})) = \law((\widehat
X^{(N)},\widehat Y^{(N)}))$ for $i=1,\ldots,4$, we have using \eqref{eq:v2-g}
\begin{align}
\notag\Ee& \bigl[g \bigl(\bigl(\widehat X_1^{(N)},
\widehat Y_1^{(N)}\bigr),\dots, \bigl(\widehat
X_4^{(N)},\widehat Y_4^{(N)}\bigr)
\bigr) \bigr]
\\
&= \int_\rnohne\int_\rmohne \bigl
\llvert f_{(\widehat X^{(N)},\widehat Y^{(N)})}(s,t)-f_{\widehat X^{(N)}}(s)f_{\widehat Y^{(N)}}(t) \bigr
\rrvert ^2 \,\mu(\dds)\,\nu(\ddt).\label{est-e04}
\end{align}
The formulae \eqref{eq:v2-ThetaUV}--\eqref{eq:v2-cond} hold, in particular,
for the empirical random variables\break $(\widehat X^{(N)},\widehat Y^{(N)})$, and
so we get 
\begin{align}
&\Ee \bigl[g \bigl(\bigl(\widehat X_1^{(N)},\widehat
Y_1^{(N)}\bigr),\dots, \bigl(\widehat X_4^{(N)},
\widehat Y_4^{(N)}\bigr) \bigr) \bigr]\notag
\\
&\label{eq:vstat}= \frac{1}{N^4} \sum_{i,j,k,l = 1}^N
g \bigl((x_i,y_i),(x_j,y_j),(x_k,y_k),(x_l,y_l)
\bigr)
\\
&=\notag \frac{1}{N^4} \smash[b]{\sum_{i,j,k,l = 1}^N}
\bigl[\varPhi(x_i-x_k)\varPsi(y_i-y_k)
- 2 \varPhi(x_i-x_j)\varPsi(y_i-y_l)
\\
\label{eq:v2n} &\phantom{= \frac{1}{N^4} \sum\sum  [} + \varPhi(x_i-x_j)
\varPsi(y_k-y_l) \bigr]
\\
&=\notag\frac{1}{N^2} \sum_{i,k = 1}^N
\varPhi(x_i-x_k)\varPsi(y_i-y_k)
-\frac{2}{N^3} \sum_{i,j,l = 1}^N
\varPhi(x_i-x_j)\varPsi(y_i-y_l)
\\
&\label{eq:vpsistat}\qquad\mbox{}+ \frac{1}{N^2} \sum_{i,j = 1}^N
\varPhi(x_i-x_j) \frac{1}{N^2} \sum
_{k,l = 1}^N \varPsi(y_k-y_l)
\\
&=\notag \biggl(\frac{1}{N^2}-\frac{2}{N^3}+\frac{2}{N^4}
\biggr) \sum_{\substack{i,k = 1\\\text{distinct}}}^N
\varPhi(x_i-x_k)\varPsi(y_i-y_k)
\\
&\notag\qquad\mbox{} + \biggl( \frac{4}{N^4}- \frac{2}{N^3} \biggr)
\sum_{\substack{i,j,l = 1\\\text{distinct}}}^N \varPhi(x_i-x_j)
\varPsi(y_i-y_l)
\\
&\label{eq:ustat}\qquad\mbox{}+ \frac{1}{N^4} \sum_{\substack{i,j,k,l = 1\\\text{distinct}}}^N
\varPhi(x_i-x_j) \varPsi(y_k-y_l).
\end{align}
The sum in \eqref{eq:vstat} is~-- for functions $g$ which are symmetric under
permutations of their variables~-- a V-statistic.

\begin{definition}\label{est-03}
    The estimator $\VN^2 := \VN^2((x_1,y_1),\dots,(x_N,y_N))$ of $V^2(X,Y)$ is defined by \eqref{eq:v2n}.
\end{definition}
In abuse of notation, we also write $\VN^2((x_1,\dots,x_N),(y_1,\dots,y_N))$
and\break $\VN^2(\boldsymbol{x}, \boldsymbol{y})$ with $\boldsymbol{x} = (x_1,\dots,x_N)$, $\boldsymbol{y} =
(y_1,\dots,y_N)$ instead of the precise\break $\VN^2((x_1,y_1),\dots,(x_N,y_N))$.

Since all random variables are bounded, we could use any of the
representations \eqref{eq:vstat}--\eqref{eq:ustat} to define an estimator.  A
computationally feasible representation is given in the following lemma.
\begin{lemma}\label{est-05}
    The estimator $\VN^2$ of $V^2(X,Y)$ has the following representation using matrix notation:
\begin{equation}
\label{est-e06} \VN^2\bigl((x_1,y_1),
\dots,(x_N,y_N)\bigr) =\frac{1}{N^2} \trace
\bigl(B^{\top}A\bigr) = \frac{1}{N^2} \sum
_{k,l=1}^N A_{kl}\xch{B_{kl},}{B_{kl}}
\end{equation}
    where
\begin{equation}
\label{eq:distancematrices}
\begin{aligned}
A &= C^{\top} a C, \quad a = \bigl(\varPhi(x_k-x_l)\bigr)_{k,l=1,..,N},\\
B &= C^{\top} b C, \quad b = \bigl(\varPsi(y_k-y_l)\bigr)_{k,l=1,..,N},
\end{aligned}
\end{equation}
    and $C= I- \frac{1}{N}\I$ with $\I = (1)_{k,l=1,\dots,N}$ and $I = (\delta_{jk})_{j,k=1,\dots,N}$.
\end{lemma}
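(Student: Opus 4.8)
The plan is a direct verification. Starting from the defining formula \eqref{eq:v2n} for $\VN^{2}$, equivalently its rearrangement \eqref{eq:vpsistat}, I will show that the doubly centred matrices $A=C^{\top}aC$ and $B=C^{\top}bC$ reproduce exactly those three sums. Two elementary facts will be used throughout: (i) the matrices $a=(\varPhi(x_k-x_l))_{k,l}$ and $b=(\varPsi(y_k-y_l))_{k,l}$ are symmetric, because the real cndfs $\varPhi,\varPsi$ of \eqref{eq:PsiPhi} are even; (ii) $C=I-\frac1N\I$ satisfies $C^{\top}=C$ and $C\I=\I C=0$, hence $C^{2}=C$. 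From (ii), $A=CaC$ obeys $A\I=\I A=0$, i.e. every row and every column of $A$ sums to zero, and likewise for $B$. Since $\trace(B^{\top}A)=\sum_{k,l}A_{kl}B_{kl}$ is just the Frobenius pairing, the second equality in \eqref{est-e06} is automatic, and it remains to show $\sum_{k,l}A_{kl}B_{kl}=N^{2}\VN^{2}$.

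First I would write the centred entries explicitly,
\begin{gather*}
A_{kl}=\varPhi(x_k-x_l)-\frac1N\sum_{p}\varPhi(x_p-x_l)-\frac1N\sum_{q}\varPhi(x_k-x_q)+\frac1{N^{2}}\sum_{p,q}\varPhi(x_p-x_q),
\end{gather*}
and the analogous expression for $B_{kl}$. Substituting the expansion of $B_{kl}$ into $\sum_{k,l}A_{kl}B_{kl}$: each of the three correction terms of $B_{kl}$, after summing over the index it does not involve, factors into $\sum_{l}A_{kl}$, $\sum_{k}A_{kl}$, or $\sum_{k,l}A_{kl}$ times a remaining factor, and therefore vanishes by the zero-sum property of $A$. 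Hence
\begin{gather*}
\sum_{k,l=1}^{N}A_{kl}B_{kl}=\sum_{k,l=1}^{N}A_{kl}\,\varPsi(y_k-y_l).
\end{gather*}

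Finally I substitute the four pieces of $A_{kl}$ into this last sum and carry out the free summations. The term $\varPhi(x_k-x_l)$ produces $\sum_{k,l}\varPhi(x_k-x_l)\varPsi(y_k-y_l)$; the two single-centring terms each produce $\frac1N\sum_{i,j,l}\varPhi(x_i-x_j)\varPsi(y_i-y_l)$ --- they coincide because, after using the evenness of $\varPhi$ and $\varPsi$ and relabelling indices, both collapse to the same triple sum --- and thus together contribute $-\frac2N\sum_{i,j,l}\varPhi(x_i-x_j)\varPsi(y_i-y_l)$; the double-centring term produces $\frac1{N^{2}}\bigl(\sum_{i,j}\varPhi(x_i-x_j)\bigr)\bigl(\sum_{k,l}\varPsi(y_k-y_l)\bigr)$. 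Dividing by $N^{2}$ recovers exactly the three summands of \eqref{eq:vpsistat}, which by the identities \eqref{eq:v2n}--\eqref{eq:vpsistat} and Definition~\ref{est-03} equals $\VN^{2}$. This proves \eqref{est-e06}.

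The lemma carries no genuine obstacle; it is a bookkeeping exercise, and the only points requiring care are tracking the normalising powers of $N$ and invoking the symmetry of $a$ and $b$ so that the two single-centring contributions merge into the factor $2$ appearing in \eqref{eq:v2n}. A computation-free variant is also available: from $B^{\top}A=CbC\,CaC=CbCaC$, cyclicity of the trace together with $C^{2}=C$ gives $\trace(B^{\top}A)=\trace(CbCa)$; expanding $CbC=b-\frac1N b\I-\frac1N\I b+\frac1{N^{2}}\I b\I$ and observing that $\I b$, $b\I$ and $\I b\I$ encode respectively the column sums, the row sums and the grand sum of $b$, one identifies $\trace(CbCa)$ with $N^{2}$ times the right-hand side of \eqref{eq:vpsistat}.
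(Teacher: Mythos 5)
Your proof is correct and follows essentially the same route as the paper's: both reduce $\trace(B^{\top}A)$ to the three sums of \eqref{eq:vpsistat} by exploiting the idempotence of $C$ (equivalently, the vanishing row and column sums of the centred matrices) together with the symmetry of $a$ and $b$ to merge the two single-centring contributions into the factor $2$. Your concluding ``computation-free variant'' is in fact verbatim the paper's argument --- cyclicity of the trace plus $CC^{\top}=C$ reduce $\trace(B^{\top}A)$ to $\trace(Cb^{\top}Ca)$, which is then expanded and matched term by term against \eqref{eq:vpsistat}.
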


\begin{remark}\label{est-07}
    If $\varPhi(x) = |x|$ and $\varPsi(y)=|y|$ the matrices $a$ and $b$ in \eqref{eq:distancematrices} are Euclidean distance matrices. For general cndfs $\varPhi$ and $\varPsi$, the matrices $-a$ and $-b$ are conditionally positive definite (see Theorem~\ref{ar-03}), and $A$, $B$ are positive definite. This gives a simple explanation as to why the right-hand side of \eqref{est-e06} is positive.
\end{remark}

\begin{proof}[Proof of Lemma~\ref{est-05}] By definition,
\begin{align}
\trace\bigl(b^{\top}a\bigr) &= \sum_{i,j=1}^N a_{ij}b_{ij} = \sum_{i,j=1}^N \varPhi(x_i-x_j) \varPsi(y_i-y_j),\\
\trace\bigl(b^{\top}\I a\bigr) &= \sum_{i,j,k=1}^N a_{ij}b_{ik} = \sum_{i,j,k=1}^N \varPhi(x_i-x_j) \varPsi(y_i-y_k), \\
\trace\bigl(\I b^{\top}\I a\bigr) &= \sum_{i,j,k,l=1}^N a_{ij}b_{kl} = \sum_{i,j=1}^N \varPhi(x_i-x_j) \sum_{k,l=1}^N \varPsi(y_k-y_l),
\end{align}
and this allows us to rewrite \eqref{eq:vpsistat} as
\begin{equation}
\label{eq:vntrace} \VN^2 
 = \frac{1}{N^2} \trace
\bigl(b^{\top}a\bigr) - \frac{2}{N^3}\trace\bigl(b^{\top} \I
a\bigr) + \frac{1}{N^4} \trace\bigl(\I b^{\top} \I a\bigr).
\end{equation}
Observe that $C=C^\top$ and $CC^\top=C$. Using this and the fact that the
trace is invariant under cyclic permutations, we get
\begin{align*}
 \trace\bigl(B^{\top}A\bigr) &=\trace\bigl(C^{\top}b^{\top}CC^{\top}aC
\bigr)
\\
&=\trace\bigl(CC^{\top}b^{\top}CC^{\top}a\bigr) =\trace
\bigl(Cb^{\top}Ca\bigr).
\end{align*}
Plugging in the definition of $C$ now gives\vadjust{\eject}
\begin{align*}
\trace\bigl(&B^{\top}A\bigr) =\trace \bigl(\bigl(b^{\top}-
\tfrac{1}{N}\I b^{\top}\bigr) \bigl(a-\tfrac{1}{N}\I a\bigr)
\bigr)
\\
&=\trace\bigl(b^{\top}a\bigr) - \tfrac{1}{N}\trace\bigl(\I
b^{\top} a\bigr) - \tfrac{1}{N}\trace\bigl(b^{\top}\I a
\bigr) + \tfrac{1}{N^2} \trace\bigl(\I b^{\top}\I a\bigr)
\\
&=\trace\bigl(b^{\top}a\bigr) - \tfrac{2}{N}\trace
\bigl(b^{\top} \I a\bigr) + \tfrac{1}{N^2} \trace\bigl(\I
b^{\top} \I a\bigr).
\end{align*}
For the last equality we use that $a$ and $b$ are symmetric matrices.
\end{proof}

We will now show that $\VN^2$ is a consistent estimator for $V^2(X,Y)$.
\begin{theorem}[Consistency] \label{thm:VNconsistent} 
    Let $(X_i,Y_i)_{i=1,\dots, N}$ be i.i.d.  copies of $(X,Y)$ and write $\boldsymbol{X} := (X_1,\dots,X_N)$, $\boldsymbol{Y} := (Y_1,\dots, Y_N)$. If $\Ee [\varPhi(X)+\varPsi(Y) ]<\infty$  holds, then
\begin{equation}
\VN^2(\boldsymbol{X},\boldsymbol{Y}) \xrightarrow[N\to\infty]{} V^2(X,Y)
\quad \text{a.s.}
\end{equation}
\end{theorem}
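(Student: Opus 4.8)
The plan is to combine the strong law of large numbers for V-statistics with \emph{bounded} kernels with a truncation argument whose error is controlled by Lemma~\ref{thm:v2bound}. A V-statistic limit theorem cannot be applied to $\VN^2$ directly, because under the sole assumption $\Ee[\varPhi(X)+\varPsi(Y)]<\infty$ the kernel $g$ of \eqref{eq:v2-PsiPhi}, \eqref{eq:v2-g} need not be integrable — we do \emph{not} assume the mixed condition \eqref{eq:moments2} — so, e.g., the ``diagonal'' term $\frac1{N^2}\sum_{i,k}\varPhi(x_i-x_k)\varPsi(y_i-y_k)$ in \eqref{eq:v2n} may have infinite mean. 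The fact that unlocks the argument is that, by \eqref{est-e04}, $\VN^2(\boldsymbol X,\boldsymbol Y)=V^2_{\mu\otimes\nu}(\widehat X^{(N)},\widehat Y^{(N)})$ is \emph{itself} a generalized distance covariance — that of the (bounded) empirical random vector — so differences of such quantities are again generalized distance covariances and are amenable to Lemma~\ref{thm:v2bound}.

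\emph{Truncation.} For $\epsilon>0$ take the finite symmetric L\'evy measures $\mu_\epsilon,\nu_\epsilon$ of \eqref{eq:munu-trunc}, with bounded cndfs $\varPhi_\epsilon\uparrow\varPhi$ and $\varPsi_\epsilon\uparrow\varPsi$ as $\epsilon\downarrow 0$, and let $\VN^2_\epsilon(\boldsymbol X,\boldsymbol Y)$ be the estimator built from $\varPhi_\epsilon,\varPsi_\epsilon$, i.e. the V-statistic of the bounded kernel $g_\epsilon$ obtained by replacing $\varPhi,\varPsi$ by $\varPhi_\epsilon,\varPsi_\epsilon$ in \eqref{eq:v2-PsiPhi}. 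Since $g_\epsilon$ is bounded, the strong law for V-statistics (equivalently, Hoeffding's SLLN for the U-statistic of the symmetrisation of $g_\epsilon$, plus the $O(1/N)$ contribution of the terms with repeated indices) gives $\VN^2_\epsilon(\boldsymbol X,\boldsymbol Y)\to V^2_\epsilon(X,Y)$ a.s., where $V^2_\epsilon(X,Y)=\iint|f_{(X,Y)}(s,t)-f_X(s)f_Y(t)|^2\,\mu_\epsilon(\dds)\,\nu_\epsilon(\ddt)$. Since $\mu_\epsilon\otimes\nu_\epsilon\uparrow\mu\otimes\nu$, monotone convergence yields $V^2_\epsilon(X,Y)\uparrow V^2(X,Y)$, which is finite by Lemma~\ref{thm:v2bound} and \eqref{eq:moments1}.

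\emph{Uniform control of the truncation error.} For a positive measure $\sigma$ on $\real^{m+n}$ write $V^2_\sigma(U,W):=\int|f_{(U,W)}(s,t)-f_U(s)f_W(t)|^2\,\sigma(\dds,\ddt)$, which is linear in $\sigma$. Using $\mu\otimes\nu-\mu_\epsilon\otimes\nu_\epsilon=(\mu-\mu_\epsilon)\otimes\nu+\mu_\epsilon\otimes(\nu-\nu_\epsilon)$, a sum of positive product measures whose marginals are symmetric L\'evy measures of full support with cndfs $\varPhi-\varPhi_\epsilon,\varPsi$ resp. $\varPhi_\epsilon,\varPsi-\varPsi_\epsilon$, the identity $\VN^2=V^2_{\mu\otimes\nu}(\widehat X^{(N)},\widehat Y^{(N)})$ and its analogue for $\VN^2_\epsilon$ give
\[ 0\le\VN^2(\boldsymbol X,\boldsymbol Y)-\VN^2_\epsilon(\boldsymbol X,\boldsymbol Y)=V^2_{(\mu-\mu_\epsilon)\otimes\nu}\bigl(\widehat X^{(N)},\widehat Y^{(N)}\bigr)+V^2_{\mu_\epsilon\otimes(\nu-\nu_\epsilon)}\bigl(\widehat X^{(N)},\widehat Y^{(N)}\bigr). \]
Bounding each term by Lemma~\ref{thm:v2bound} (also using $\varPhi_\epsilon\le\varPhi$) and recalling that $\widehat X^{(N)}$ is uniform on $X_1,\dots,X_N$, the right-hand side is at most
\[ 16\Bigl(\tfrac1N\sum_{i=1}^N(\varPhi-\varPhi_\epsilon)(X_i)\Bigr)\Bigl(\tfrac1N\sum_{i=1}^N\varPsi(Y_i)\Bigr)+16\Bigl(\tfrac1N\sum_{i=1}^N\varPhi(X_i)\Bigr)\Bigl(\tfrac1N\sum_{i=1}^N(\varPsi-\varPsi_\epsilon)(Y_i)\Bigr). \]
All summands are nonnegative with finite mean by \eqref{eq:moments1}, so by the ordinary SLLN this tends a.s. to $\delta(\epsilon):=16\,\Ee[(\varPhi-\varPhi_\epsilon)(X)]\,\Ee\varPsi(Y)+16\,\Ee\varPhi(X)\,\Ee[(\varPsi-\varPsi_\epsilon)(Y)]$, and $\delta(\epsilon)\to 0$ as $\epsilon\downarrow 0$ by dominated convergence ($0\le\varPhi-\varPhi_\epsilon\le\varPhi$ with $\Ee\varPhi(X)<\infty$, and likewise for $\varPsi$).

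\emph{Conclusion.} Fix a null set off which the convergences in the two previous steps hold simultaneously for all $\epsilon\in\{1/k:k\in\nat\}$. On its complement, for every $k$,
\[ \limsup_{N\to\infty}\bigl|\VN^2(\boldsymbol X,\boldsymbol Y)-V^2(X,Y)\bigr|\le\delta(1/k)+\bigl|V^2(X,Y)-V^2_{1/k}(X,Y)\bigr|, \]
and letting $k\to\infty$ the right-hand side tends to $0$ by the first two steps; hence $\VN^2(\boldsymbol X,\boldsymbol Y)\to V^2(X,Y)$ a.s. The main obstacle is the one identified at the outset — the possible non-integrability of $g$ under \eqref{eq:moments1} alone; once $\VN^2$ is recognised as the generalized distance covariance of the empirical measure, so that Lemma~\ref{thm:v2bound} bounds the truncation error by a product of empirical means of $\varPhi$ and $\varPsi$, the remainder is just the ordinary SLLN together with the elementary V-statistic SLLN for the bounded kernel $g_\epsilon$.
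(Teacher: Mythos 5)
Your proof is correct and follows essentially the same route as the paper's: truncation to the finite L\'evy measures $\mu_\epsilon,\nu_\epsilon$ of \eqref{eq:munu-trunc}, the SLLN for V-statistics with bounded kernels for the truncated estimator, monotone convergence for $V^2_\epsilon\to V^2$, and Lemma~\ref{thm:v2bound} together with the ordinary SLLN to control the truncation error of the estimator. Your explicit decomposition $\mu\otimes\nu-\mu_\epsilon\otimes\nu_\epsilon=(\mu-\mu_\epsilon)\otimes\nu+\mu_\epsilon\otimes(\nu-\nu_\epsilon)$ is, if anything, a slightly more careful bookkeeping of the remainder than the paper's single displayed term $\mu^\epsilon\otimes\nu^\epsilon$, but the resulting bound and conclusion are the same.
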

\begin{proof}
    The moment condition $\Ee [\varPhi(X)+\varPsi(Y) ]<\infty$ ensures that the generalized distance covariance $V^2(X,Y)$ is finite, cf. Lemma \ref{thm:v2bound}. Define $\mu_\epsilon$ and $\nu_\epsilon$ as in \eqref{eq:munu-trunc}, and write $V^2_\epsilon(X,Y)$ for the corresponding generalized distance covariance and $\VN_\epsilon^2(\boldsymbol{X},\boldsymbol{Y})$ for its estimator. By the triangle inequality we obtain
\begin{align*}
&|V^2(X,Y)-\VN^2(\boldsymbol{X},\boldsymbol{Y})|
\\
&\qquad\leq |V^2(X,Y)-V^2_\epsilon(X,Y)| +
|V^2_\epsilon(X,Y)-\VN_\epsilon^2(\boldsymbol{X},
\boldsymbol{Y})|
\\
&\qquad\qquad\mbox{} + |\VN_\epsilon^2(\boldsymbol{X},\boldsymbol{Y})-
\VN^2(\boldsymbol{X},\boldsymbol{Y})|.
\end{align*}
    We consider the three terms on the right-hand side separately. The first term vanishes as $\epsilon\to 0$, since
\begin{equation*}
\lim_{\epsilon\to 0}V^2_\epsilon(X,Y) =
V^2(X,Y)
\end{equation*}
    by monotone convergence. For each $\epsilon>0$, the second term converges to zero as $N\to\infty$, since
\begin{equation*}
\lim_{N\to \infty} \VN_\epsilon^2(\boldsymbol{X},\boldsymbol{Y}) =
V^2_\epsilon(X,Y)\quad\text{a.s.}
\end{equation*}
    by the strong law of large numbers (SLLN) for $V$-statistics; note that this is applicable since the functions $\varPhi_\epsilon$ and $\varPsi_\epsilon$ are bounded (because of the finiteness of the L\'evy measures $\mu_\epsilon$ and $\nu_\epsilon$).

    For the third term we set $\mu^\epsilon = \mu-\mu_\epsilon$, $\nu^\epsilon = \nu-\nu_\epsilon$ and write $\varPhi^\epsilon$, $\varPsi^\epsilon$ for the corresponding continuous negative definite functions. Lemma \ref{thm:v2bound} yields the inequality
\begin{align*}
&|\VN_\epsilon^2\bigl((x_1,y_1),
\dots,(x_N,y_N)\bigr)-\VN^2
\bigl((x_1,y_1),\dots,(x_N,y_N)
\bigr)|
\\
&\qquad= \iint \bigl\llvert f_{\widehat X^{(N)},\widehat Y^{(N)}}(s,t)-f_{\widehat X^{(N)}}(s)f_{\widehat Y^{(N)}}(t)
\bigr\rrvert ^2 \mu^\epsilon(\dds) \, \nu^\epsilon(\ddt)
\\
&\qquad\leq 16\, \Ee \varPhi^\epsilon\bigl(\widehat X^{(N)}\bigr)
\cdot \Ee \varPsi^\epsilon\bigl(\widehat Y^{(N)}\bigr) = 16 \sum
_{i=1}^N \frac{1}{N}
\varPhi^\epsilon(x_i) \cdot \sum_{i=1}^N
\frac{1}{N} \varPsi^\epsilon(y_i).
\end{align*}
    From the representation \eqref{eq:PsiPhi} we know that $\varPhi^\epsilon(x)\leq\varPhi(x)$, hence also $\Ee\varPhi^\epsilon(X) \leq \Ee \varPhi(X)$ and this is finite by assumption. Therefore, we can use monotone convergence to conclude that $\lim_{\epsilon\to 0}\Ee \varPhi^\epsilon(X) = 0$. Thus, the classical SLLN applies and proves
\begin{align*}
\lim_{\epsilon\to 0}\limsup_{N\to\infty} |
\VN_\epsilon^2(\boldsymbol{X},\boldsymbol{Y})-\VN^2(\boldsymbol{X},\boldsymbol{Y})|
&\leq \lim_{\epsilon\to 0} \Ee \varPhi^\epsilon(X) \cdot \Ee
\varPsi^\epsilon(Y) = 0\quad\text{a.s.} \qedhere
\end{align*}
\end{proof}

Next we study the behaviour of the estimator under the hypothesis of
independence.
\begin{theorem} \label{thm:2dconvtoGaussian}
If $X$ and $Y$ are independent and satisfy the moment conditions
\begin{equation}
\label{est-e20} \Ee \bigl[\varPhi(X)+\varPsi(Y) \bigr]<\infty \quad \text{and}\quad
\Ee \bigl[\log^{1+\epsilon}\bigl(1+|X|^2\bigr) +
\log^{1+\epsilon}\bigl(1+|Y|^2\bigr) \bigr]<\infty
\end{equation}
for some $\epsilon>0$, then
\begin{equation}
\label{est-e22} N\cdot\VN^2(\boldsymbol{X},\boldsymbol{Y}) \xrightarrow[N\to\infty]{d}
\iint|\G(s,t)|^2\,\mu(\dds)\,\nu(\ddt) = \|\G\|_{\mu\otimes \nu}^2
\end{equation}
in distribution, where $(\G(s,t))_{(s,t)\in \real^{m+n}}$ is a complex-valued
Gaussian random field with $\Ee(\G(s,t)) = 0$ and
\begin{align}
\notag&\Cov \bigl(\G(s,t),\G\bigl(s',t'\bigr) \bigr)
\\
&\quad= \bigl(f_X\bigl(s-s'
\bigr)-f_X(s)\overline{f_X\bigl(s'\bigr)}
\bigr)\cdot \bigl(f_Y\bigl(t-t'\bigr)-f_Y(t)
\overline{f_Y\bigl(t'\bigr)} \bigr).\label{est-e24}
\end{align}
\end{theorem}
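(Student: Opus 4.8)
The plan is to prove the weak convergence \eqref{est-e22} by expressing $N\cdot\VN^2(\boldsymbol X,\boldsymbol Y)$ as a (squared) norm of an empirical process and invoking a functional central limit theorem, combined with a truncation argument controlling the high-frequency part of the L\'evy measures. First I would introduce, for a sample $(X_i,Y_i)_{i=1,\dots,N}$, the centered empirical characteristic-function process
\begin{equation*}
\G_N(s,t) := \frac{1}{\sqrt N}\sum_{j=1}^N\bigl(\ee^{\ii\scalp{X_j}{s}}-f_X(s)\bigr)\bigl(\ee^{\ii\scalp{Y_j}{t}}-f_Y(t)\bigr),
\end{equation*}
and observe — using independence of $X$ and $Y$ and the computations already carried out in the proof of Lemma~\ref{thm:v2bound} — that $\Ee\G_N(s,t)=0$ and that $\Cov(\G_N(s,t),\G_N(s',t'))$ equals exactly the right-hand side of \eqref{est-e24}, independently of $N$. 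One then checks (this is the algebraic heart, and it parallels \cite{SzekRizzBaki2007,SzekRizz2009}) that the double-centering in Lemma~\ref{est-05} precisely removes the difference between the empirical characteristic function of the pair and the product of the empirical marginals, so that $N\cdot\VN^2(\boldsymbol X,\boldsymbol Y) = \iint|\G_N(s,t)|^2\,\mu(\dds)\,\nu(\ddt) + o_{\Pp}(1)$, the error term coming from the $O(1/N)$ discrepancy between the V-statistic and the corresponding (exactly centered) object.

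Next I would establish convergence of $\G_N$ to the Gaussian field $\G$ in the space $L^2(\mu\otimes\nu)$, or rather in a slightly larger Hilbert space tailored to the growth of $\mu$ and $\nu$, so that the map $h\mapsto\|h\|^2_{\mu\otimes\nu}$ is continuous and the continuous mapping theorem yields \eqref{est-e22}. Finite-dimensional convergence is the ordinary multivariate CLT applied to the i.i.d.\ bounded summands $(s,t)\mapsto(\ee^{\ii\scalp{X_j}{s}}-f_X(s))(\ee^{\ii\scalp{Y_j}{t}}-f_Y(t))$, evaluated at finitely many $(s_k,t_k)$. For tightness in $L^2$ it suffices, by a standard criterion (e.g.\ controlling $\Ee\iint|\G_N(s,t)-\G_N(s_0,t_0)|^2$-type increments, or directly bounding tails of $\iint|\G_N|^2$ over $\{|s|\vee|t|\ge R\}$), to show that
\begin{equation*}
\lim_{R\to\infty}\limsup_{N\to\infty}\,\Ee\!\iint_{\{|s|>R\}\cup\{|t|>R\}}\!|\G_N(s,t)|^2\,\mu(\dds)\,\nu(\ddt)=0 .
\end{equation*}
By Fubini and the variance identity this equals $\iint_{\{|s|>R\}\cup\{|t|>R\}}(1-|f_X(s)|^2)(1-|f_Y(t)|^2)\,\mu(\dds)\,\nu(\ddt)$, which we bound using the elementary estimate $1-|f_X(s)|^2=\Ee(1-\cos\scalp{X-X'}{s})$ together with \eqref{ar-e30}, \eqref{eq:EPsi-new}; the finiteness of the full integral $\Ee\varPhi(X)\cdot\Ee\varPsi(Y)<\infty$ (guaranteed by \eqref{est-e20}) is exactly what makes the tail vanish. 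Near $s=0$ (resp.\ $t=0$) the integrability is automatic since $1-|f_X(s)|^2=O(|s|^2)$ there by the log-moment condition in \eqref{est-e20}, which in turn controls a neighbourhood of the origin for general L\'evy measures via the estimate $1-\cos\scalp{X-X'}{s}\le C\,|s|^2\log(1+|X-X'|^2)/\log(1/|s|)$-type bounds; here Lemma~\ref{ar-61} lets us pass from $|X|,|Y|$ jointly to the two coordinate log-moments.

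The main obstacle I anticipate is \emph{uniform} control of the low-frequency (near-origin) contribution of an \emph{unbounded} L\'evy measure: unlike in the classical Euclidean-distance case where $\mu(\dds)=c|s|^{-m-\alpha}\dds$ and the behaviour of $1-|f_X(s)|^2$ near $s=0$ can be handled by a simple moment condition on $X$, here we only assume the much weaker $\log^{1+\epsilon}$-moment, and we must show that $\iint_{\{|s|<\delta\}}(1-|f_X(s)|^2)(1-|f_Y(t)|^2)\,\mu(\dds)\,\nu(\ddt)$ is small uniformly. This is precisely why the two moment hypotheses in \eqref{est-e20} both appear; dealing with it will likely require splitting $\real^{m+n}$ into the four regions $\{|s|,|t|$ small$\}$, $\{|s|$ small$,|t|$ large$\}$, etc., truncating $\mu,\nu$ as in \eqref{eq:munu-trunc}, proving the statement first for $\mu_\epsilon,\nu_\epsilon$ (where $\varPhi_\epsilon,\varPsi_\epsilon$ are bounded and everything reduces to the finite case and ordinary V-statistic CLTs), and then letting $\epsilon\to0$ using Lemma~\ref{thm:v2bound} to bound $N\cdot|\VN^2_\epsilon-\VN^2|$ and a corresponding bound for $\|\G\|^2_{\mu\otimes\nu}-\|\G\|^2_{\mu_\epsilon\otimes\nu_\epsilon}$. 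A secondary technical point is to argue that the limiting integral $\iint|\G(s,t)|^2\,\mu(\dds)\,\nu(\ddt)$ is a.s.\ finite, which again follows from $\Ee\iint|\G|^2\,\mu\otimes\nu = \iint(1-|f_X|^2)(1-|f_Y|^2)\,\mu\otimes\nu<\infty$ under \eqref{est-e20}.
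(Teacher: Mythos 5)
Your overall architecture is sound, but it is a genuinely different route from the paper's. You centre the empirical process at the \emph{true} characteristic functions, working with $\G_N(s,t)=N^{-1/2}\sum_j(\ee^{\ii\scalp{X_j}{s}}-f_X(s))(\ee^{\ii\scalp{Y_j}{t}}-f_Y(t))$, whose covariance is indeed exactly \eqref{est-e24} and independent of $N$, and you aim at weak convergence in the Hilbert space $L^2(\mu\otimes\nu)$. The paper instead works with the process $Z_N$ centred at the \emph{empirical} marginals, for which $N\cdot\VN^2=\|\sqrt{N}Z_N\|^2_{\mu\otimes\nu}$ holds \emph{exactly} (no $o_{\Pp}(1)$ correction), proves $\sqrt{N}Z_N\xrightarrow{d}\G$ in $(C(K_T),\|\cdot\|_{K_T})$ by reducing to Cs\"org\H{o}'s theorem on multivariate empirical characteristic function processes, and then truncates $\mu,\nu$ both near the origin and near infinity (the measures $\mu_{\epsilon,\delta},\nu_{\epsilon,\delta}$) so that $h\mapsto\|h\|^2_{\mu_{\epsilon,\delta}\otimes\nu_{\epsilon,\delta}}$ is continuous on $C(K_T)$, controlling the truncation errors uniformly in $N$ via \eqref{eq:EG} and \eqref{eq:ExpZn2-b}. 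Your route is closer to the original Sz\'ekely--Rizzo--Bakirov argument (cf.\ Remark~\ref{rem:momcond}.a)); its payoff is that the summands $\eta_j=(\ee^{\ii\scalp{X_j}{\cdot}}-f_X)(\ee^{\ii\scalp{Y_j}{\cdot}}-f_Y)$ are i.i.d.\ centred elements of $L^2(\mu\otimes\nu)$ with $\Ee\|\eta_j\|^2=\Ee\varPhi(X-X')\,\Ee\varPsi(Y-Y')<\infty$, so the Hilbert-space CLT applies directly, and the discrepancy $\sqrt{N}Z_N-\G_N=-\sqrt{N}(\widehat f_X-f_X)(\widehat f_Y-f_Y)$ has mean-square $L^2(\mu\otimes\nu)$-norm equal to $N^{-1}\Ee\varPhi(X-X')\,\Ee\varPsi(Y-Y')\to0$, which substantiates your $o_{\Pp}(1)$ claim. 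The paper's route buys sup-norm convergence of the process itself, at the price of the log-moment hypothesis.

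Two points in your sketch need repair. First, you misplace the role of the log-moment condition: integrability of $(1-|f_X(s)|^2)(1-|f_Y(t)|^2)$ near the origin is \emph{not} what it is for; that already follows from $\Ee\varPhi(X)+\Ee\varPsi(Y)<\infty$, since Tonelli gives $\int(1-|f_X(s)|^2)\,\mu(\dds)=\Ee\varPhi(X-X')\leq4\Ee\varPhi(X)$ with no restriction to large $|s|$. Your proposed estimate of the type $1-\cos\scalp{(X-X')}{s}\leq C|s|^2\log(1+|X-X'|^2)/\log(1/|s|)$ is not a valid bound and is not needed. In the paper the log-moment condition enters only to guarantee continuity of the limit field and the convergence in $C(K_T)$ via Cs\"org\H{o}; in a pure $L^2(\mu\otimes\nu)$ argument it plays no such role. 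Second, your tightness criterion is insufficient as stated: finite-dimensional convergence together with smallness of $\Ee\iint_{\{|s|>R\}\cup\{|t|>R\}}|\G_N|^2\,\mu(\dds)\,\nu(\ddt)$ does not yield tightness in $L^2$, because bounded sets of $L^2$ over a bounded region are not relatively compact; one needs flat concentration on finite-dimensional subspaces. Here the repair is cheap~-- the covariance operator of $\G_N$ does not depend on $N$, so $\sum_{k>m}\Ee|\langle\G_N,e_k\rangle|^2\to0$ uniformly in $N$ for any orthonormal basis, or one simply invokes the CLT for i.i.d.\ sums in a separable Hilbert space~-- but as written this step is a gap.
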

\begin{proof}
Let $X,Y$ and $\G$ be as above and let $X',X_i$ and $Y',Y_i$ ($i\in\nat$) be
independent random variables with laws $\law(X)$ and $\law(Y)$, respectively.
Define for $N\in\nat$, $s\in\real^m$, $t\in \real^n$
\begin{equation}
\label{est-e26} Z_N(s,t):= \frac{1}{N}\sum
_{k=1}^N \ee^{\ii \scalp{s}{X_k}+ \ii\scalp{t}{Y_k}} - \frac{1}{N^2}
\sum_{k,l=1}^N \ee^{\ii \scalp{s}{X_k} + \ii \scalp{t}{Y_l}}.
\end{equation}
Then
\begin{equation}
\label{est-e28} N\cdot\VN^2(\boldsymbol{X},\boldsymbol{Y}) = \|\sqrt{N}
Z_N\|^2_{\mu \otimes\nu}.
\end{equation}
The essential idea is to show that (on an appropriate space) $\sqrt{N} Z_N
\xrightarrow{d} \G$ and then to apply the continuous mapping theorem.

Since $X$ and $Y$ are independent, we have
\begin{align}
&\Ee\bigl(Z_N(s,t)\bigr)= 0,\label{eq:ExpZn2-a}
\\
&\notag\Ee\bigl(Z_N(s,t)\overline{Z_N
\bigl(s',t'\bigr)}\bigr)
\\
&\quad= \tfrac{N-1}{N^2} \bigl(f_X\bigl(s-s'
\bigr)-f_X(s)\overline{f_X\bigl(s'\bigr)}
\bigr) \bigl(f_Y\bigl(t-t'\bigr)-f_Y(t)
\overline{f_Y\bigl(t'\bigr)}\bigr),
\label{eq:ExpZn2-c}
\\
&\Ee|\sqrt{N}Z_N(s,t)|^2 = \tfrac{N-1}{N}
\bigl(1-|f_X(s)|^2\bigr) \bigl(1-|f_Y(t)|^2
\bigr). \label{eq:ExpZn2-b}
\end{align}
The first identity \eqref{eq:ExpZn2-a} is obvious and \eqref{eq:ExpZn2-b}
follows from \eqref{eq:ExpZn2-c} if we set $s=s'$ and $t=t'$. The proof of
\eqref{eq:ExpZn2-c} is deferred to Lemma~\ref{lem:ExpZn2-c} following this
proof.

The convergence $\sqrt{N} Z_N \xrightarrow{d} \G$ in $\Cskript_T:= (C(K_T),
\|.\|_{K_T})$ with $K_T = \{x\in\real^{m+n}: |x|\leq T\}$, i.e. in the space
of continuous functions on $K_T$ equipped with the supremum norm, holds if
$\Ee\log^{1+\epsilon} (\sqrt{|X|^2+|Y|^2}\vee 1 ) <\infty$, see
Cs\"{o}rg{\H{o}} \cite[Thm.~on p.~294]{Csoe1985} or Ushakov
\cite[Sec.~3.7]{Usha1999}. This log-moment condition is equivalent to the
log-moment condition \eqref{est-e20}, see Lemma \ref{ar-61}.

In fact, the result in \cite{Csoe1985} is cast in a more general setting,
proving the convergence for vectors $(X_1,\dots,X_N)$, but only
one-dimensional marginals are considered. The proof for multidimensional
marginals is very similar, so we will only give an outline:

Let $F_Z$ denote the distribution function of the random variable $Z$ and
$\FN_z$ the empirical distribution of a sample $z_1,\ldots,z_N$; if the
sample is replaced by $N$ independent copies of $Z$, we write $\FN_Z$. Using
this notation and the independence of $X$ and $Y$ yields the representation
\begin{align}
\sqrt{N} Z_N(s,t) \notag &= \sqrt{N} \Biggl(\frac{1}{N} \sum
_{k=1}^N \ee^{\ii \scalp{s}{X_k} + \ii \scalp{t}{Y_k}} -
f_{(X,Y)}(s,t) \Biggr)
\\
\notag &\qquad\mbox{} - \sqrt{N} \Biggl(\frac{1}{N} \sum
_{k=1}^N \ee^{\ii \scalp{s}{X_k}} - f_X(s)
\Biggr) \Biggl( \frac{1}{N} \sum_{l=1}^N
\ee^{\ii \scalp{t}{Y_l}} \Biggr)
\\
\notag &\qquad\mbox{} - \sqrt{N} \Biggl(\frac{1}{N} \sum
_{l=1}^N \ee^{\ii \scalp{t}{Y_l}} - f_Y(t)
\Biggr) f_X(s)
\\
 \notag&= \int \ee^{\ii (\scalp sx + \scalp ty)}\,\dd\bigl(\sqrt{N} \bigl(
\FN_{(X,Y)} (x,y) - F_{(X,Y)} (x,y)\bigr)\bigr)
\\
\notag & \qquad\mbox{}- \int \ee^{\ii \scalp sx}\,\dd\bigl(\sqrt{N} \bigl(
\FN_{X} (x) - F_{X} (x)\bigr)\bigr) \cdot \Biggl(
\frac{1}{N} \sum_{l=1}^N
\ee^{\ii \scalp{t}{Y_l}} \Biggr)
\\
 & \qquad\mbox{}- \int \ee^{\ii \scalp ty}\,\dd\bigl(\sqrt{N} \bigl(
\FN_{Y} (y) - F_{Y} (y)\bigr)\bigr) \cdot
f_X(s).\label{eq:Zn-empircalcharfun-minus-charfun}
\end{align}
Note that for bivariate distributions the integrals with respect to a single
variable reduce to integrals with respect to the corresponding marginal
distribution, e.g.\break $\int_{\real^{m+n}} h(x) dF_{(X,Y)}(x,y) = \int_{\real^m}
h(x) dF_X(x)$. Therefore, a straightforward calculation shows that $\sqrt{N}
Z_N(s,t)$ equals
\begin{equation}
\int g(x,y) \ \dd\bigl(\sqrt{N} \bigl(\FN_{(X,Y)} (x,y) -
F_{(X,Y)} (x,y)\bigr)\bigr)
\end{equation}
with the integrand
\begin{equation*}
g(x,y) := \ee^{\ii (\scalp sx + \scalp ty)} - \ee^{\ii \scalp sx} \Biggl[{\textstyle
\frac{1}{N} \sum\limits
_{l=1}^N
\ee^{\ii \scalp{t}{Y_l}}} \Biggr] - f_X(t) \ee^{\ii \scalp ty}.
\end{equation*}
Following Cs\"{o}rg{\H{o}} \cite{Csoe1981a} we obtain for $N\to \infty$ the
limit
\begin{equation}
\label{est-e32} \int_{\real^{m+n}} \bigl(\ee^{\ii \scalp sx + \ii \scalp ty} -
\ee^{\ii \scalp sx} f_Y(t) - f_X(s) \ee^{\ii \scalp ty}
\bigr)\mathrm{d}B\xch{(x,y),}{(x,y)}
\end{equation}
where $B$ is a Brownian bridge; as in~\cite[Eq.~(3.2)]{Csoe1981a} one can
show that it is a Gaussian process indexed by $\real^{m+n}$ satisfying
\begin{align}
\Ee\bigl(B(x,y)\bigr) &=\label{est-e34} \xch{0,}{0}
\\
\Ee \bigl(B(x,y)B\bigl(x',y'\bigr) \bigr)
&= \Pp \bigl(X\leq x\wedge x', Y\leq y\wedge y' \bigr)\notag
\\
&\label{est-e36}\qquad\mbox{} - \Pp (X\leq x, Y\leq y ) \Pp \xch{\bigl(X\leq
x',Y\leq y' \bigr),}{\bigl(X\leq
x',Y\leq y' \bigr)}
\\
\lim_{x\to-\infty} B(x,y) &=\label{est-e38} \lim_{y\to -\infty}
B(x,y) = \lim_{(x,y)\to (\infty,\infty)}B(x,y) = 0.
\end{align}
The limit \eqref{est-e32} is continuous if, and only if, a rather complicated
tail condition is satisfied \cite[Thm.~3.1]{Csoe1981a}; Cs\"{o}rg{\H{o}}
\cite[p.~294]{Csoe1985} shows that this condition is implied by the simpler
moment condition \eqref{est-e20}, cf. Lemma~\ref{ar-61}. Thus, $\sqrt{N} Z_N
\xrightarrow{d} \G$ in $\Cskript_T:= (C(K_T), \|.\|_{K_T})$.

Pick $\epsilon>0$, set $\delta:= 1/\epsilon$ and define
\begin{equation}
\label{est-e40} \mu_{\epsilon,\delta}(A) := \mu\bigl(A\cap \{\epsilon \leq |s| <
\delta\}\bigr) \et \mu^{\epsilon,\delta} := \mu-\mu_{\epsilon,\delta};
\end{equation}
the measures $\nu_{\epsilon,\delta}$ and $\nu^{\epsilon,\delta}$ are defined
analogously. Note that
\begin{align}
\bigl\llvert \|h\|_{\mu_{\epsilon,\delta}\otimes\nu_{\epsilon,\delta}} -
\|h'\|_{\mu_{\epsilon,\delta}\otimes\nu_{\epsilon,\delta}} \bigr\rrvert ^2 &\leq
\|h-h'\|_{\mu_{\epsilon,\delta}\otimes\nu_{\epsilon,\delta}}^2
\notag\\
&= \int |h-h'|^2\,\dd\mu_{\epsilon,\delta}\otimes
\nu_{\epsilon,\delta}
\notag\\
&\leq \|h-h'\|^2_{K_T} \cdot
\mu_{\epsilon,\delta}\bigl(\real^n\bigr) \cdot \nu_{\epsilon,\delta}\bigl(
\real^m\bigr)
\end{align}
shows that $h\mapsto
\|h\|^2_{\mu_{\epsilon,\delta}\otimes\nu_{\epsilon,\delta}}$ is continuous on
$\Cskript_T$. Thus, the continuous mapping theorem implies
\begin{equation}
\label{est-e44} \|\sqrt{N} Z_N\|^2_{\mu_{\epsilon,\delta}\otimes\nu_{\epsilon,\delta}}
\xrightarrow[N\to\infty]{d} \|\G\|^2_{\mu_{\epsilon,\delta}\otimes\nu_{\epsilon,\delta}}.
\end{equation}
By the triangle inequality we have
\begin{align}
\notag \bigl\llvert N\cdot\VN^2(\boldsymbol{X},\boldsymbol{Y}) - \|\G
\|_{\mu\otimes \nu}^2 \bigr\rrvert &\leq \bigl\llvert N\cdot
\VN^2(\boldsymbol{X},\boldsymbol{Y}) - \|\sqrt{N} Z_N\|^2_{\mu_{\epsilon,\delta}\otimes\nu_{\epsilon,\delta}}
\bigr\rrvert
\\
&\notag\qquad\mbox{} + \bigl\llvert \|\sqrt{N} Z_N\|^2_{\mu_{\epsilon,\delta}\otimes\nu_{\epsilon,\delta}}-
\|\G\|^2_{\mu_{\epsilon,\delta}\otimes\nu_{\epsilon,\delta}} \bigr\rrvert
\\
&\qquad\mbox{} + \bigl\llvert \|\G\|^2_{\mu_{\epsilon,\delta}\otimes\nu_{\epsilon,\delta}}- \|\G
\|_{\mu\otimes \nu}^2 \bigr\rrvert .
\end{align}
Thus, it remains to show that the first and last terms on the right-hand side
vanish uniformly as $\epsilon\to 0$. Note that
\begin{align}
\notag&\|\G\|^2_{\mu\otimes\nu} - \|\G
\|^2_{\mu_{\epsilon,\delta}\otimes\nu_{\epsilon,\delta}}
\\
&\quad =\|\G\|^2_{\mu^{\epsilon,\delta}\otimes \nu^{\epsilon,\delta}} + \|\G\|^2_{ \mu_{\epsilon,\delta}\otimes \nu^{\epsilon,\delta}}
+ \|\G\|^2_{ \mu^{\epsilon,\delta}\otimes \nu_{\epsilon,\delta}} \xrightarrow[\epsilon \to 0]{} 0 \
\text{a.s.} \label{eq:gauss-on-bounded}
\end{align}
This follows from the dominated convergence theorem, since
\begin{align}
\notag\Ee \bigl(\|\G\|^2_{\mu\otimes\nu} \bigr) &= \bigl\llVert 1-|f_X|^2 \bigr\rrVert
^2_{\mu} \cdot \bigl\llVert 1-|f_Y|^2
\bigr\rrVert ^2_{\nu}
\\
&= \Ee\varPhi\bigl(X-X'\bigr) \cdot \Ee\varPsi
\bigl(Y-Y'\bigr) < \infty. \label{eq:EG}
\end{align}
Moreover,
\begin{align}
\Ee & \bigl( \bigl\llvert N\cdot\VN^2(
\boldsymbol X, \boldsymbol Y) - \|\sqrt{N} Z_N\|^2_{\mu_{\epsilon,\delta}\otimes\nu_{\epsilon,\delta}} \bigr
\rrvert \bigr)
\notag\\
&= \Ee \|\sqrt{N} Z_N\|^2_{ \mu^{\epsilon,\delta}\otimes \nu^{\epsilon,\delta}} + \Ee \|
\sqrt{N} Z_N\|^2_{ \mu_{\epsilon,\delta}\otimes \nu^{\epsilon,\delta}} + \Ee \|\sqrt{N}
Z_N\|^2_{ \mu^{\epsilon,\delta}\otimes \nu_{\epsilon,\delta}} \label{est-e50}
\end{align}
and for the first term we have
\begin{equation}
\label{eq:Zn-on-bounded} \bigl\llVert \Ee\bigl(|\sqrt{N} Z_N|^2
\bigr) \bigr\rrVert ^2_{ \mu^{\epsilon,\delta}\otimes \nu^{\epsilon,\delta}} = \bigl(\tfrac{N-1}{N}
\bigr)^2\cdot \bigl\llVert 1-|f_X|^2 \bigr
\rrVert ^2_{ \mu^{\epsilon,\delta}} \cdot \bigl\llVert 1-|f_Y|^2
\bigr\rrVert ^2_{\nu^{\epsilon,\delta}} \xrightarrow[\epsilon\to 0]{} 0
\end{equation}
by dominated convergence, since we have
$\|1-|f_X|^2\|^2_{\mu^{\epsilon,\delta}} \leq \Ee\varPhi(X-X') < \infty$ and
$\|1-|f_Y|^2\|^2_{ \nu^{\epsilon,\delta}} \leq \Ee\varPsi(Y-Y') < \infty$.
The other summands are dealt with similarly.

The result follows since the convergence in \eqref{eq:gauss-on-bounded} and
\eqref{eq:Zn-on-bounded} is uniform in $N$.
\end{proof}

We still have to prove \eqref{eq:ExpZn2-c}.
\begin{lemma}\label{lem:ExpZn2-c}
    In the setting of \textup{(}the proof of\textup{)} Theorem~\ref{thm:2dconvtoGaussian} we have
\begin{align*}
&\Ee\bigl(Z_N(s,t)\overline{Z_N\bigl(s',t'
\bigr)}\bigr)
\\
&\quad= \tfrac{N-1}{N^2} \bigl(f_X\bigl(s-s'
\bigr)-f_X(s)\overline{f_X\bigl(s'\bigr)}
\bigr) \bigl(f_Y\bigl(t-t'\bigr)-f_Y(t)
\overline{f_Y\bigl(t'\bigr)}\bigr).
\end{align*}
\end{lemma}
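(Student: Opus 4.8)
The plan is to observe that $Z_N(s,t)$ is, after centring the exponentials, a (biased) empirical covariance of $\ee^{\ii\scalp{s}{X_k}}$ and $\ee^{\ii\scalp{t}{Y_k}}$; since $X$ and $Y$ are independent, the required second moment will factorise into an ``$X$-part'' and a ``$Y$-part'', each of which collapses to a single index because the centred exponentials have mean zero.

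First I would introduce the centred variables
\begin{align*}
p_k &:= \ee^{\ii\scalp{s}{X_k}} - f_X(s), & q_k &:= \ee^{\ii\scalp{t}{Y_k}} - f_Y(t),\\
c_m &:= \ee^{-\ii\scalp{s'}{X_m}} - \overline{f_X(s')}, & d_m &:= \ee^{-\ii\scalp{t'}{Y_m}} - \overline{f_Y(t')},
\end{align*}
all of which have expectation zero. A short computation shows that the means subtracted in the single and in the double sum cancel each other, so that
\begin{equation*}
Z_N(s,t) = \frac{1}{N}\sum_{k=1}^N p_k q_k - \frac{1}{N^2}\sum_{k,l=1}^N p_k q_l
\end{equation*}
and, taking complex conjugates, $\overline{Z_N(s',t')} = \frac{1}{N}\sum_m c_m d_m - \frac{1}{N^2}\sum_{m,n}c_m d_n$.

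Next I would expand $\Ee(Z_N(s,t)\overline{Z_N(s',t')})$ into the four resulting multiple sums, carrying the prefactors $\tfrac{1}{N^2}$, $-\tfrac{1}{N^3}$, $-\tfrac{1}{N^3}$ and $\tfrac{1}{N^4}$. In each summand the factors depending on the $X_j$ (products of $p$'s and $c$'s) are independent of those depending on the $Y_j$ (products of $q$'s and $d$'s), so every expectation factorises into an $X$-part and a $Y$-part. Since $(p_k)_k$ and $(c_m)_m$ are independent across distinct indices and centred, $\Ee(p_k c_m)=0$ unless $k=m$, in which case it equals $\sigma_X := f_X(s-s') - f_X(s)\overline{f_X(s')}$; likewise $\Ee(q_l d_n)=\sigma_Y\,\I_{\{l=n\}}$ with $\sigma_Y := f_Y(t-t') - f_Y(t)\overline{f_Y(t')}$. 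Hence only ``fully matched'' index configurations contribute, and counting them yields $N$, $N$, $N$ and $N^2$ surviving tuples for the four sums, each with value $\sigma_X\sigma_Y$. Summing up,
\begin{equation*}
\Ee\bigl(Z_N(s,t)\overline{Z_N(s',t')}\bigr) = \Bigl(\tfrac{1}{N} - \tfrac{1}{N^2} - \tfrac{1}{N^2} + \tfrac{1}{N^2}\Bigr)\sigma_X\sigma_Y = \tfrac{N-1}{N^2}\,\sigma_X\sigma_Y,
\end{equation*}
which is precisely the claimed identity.

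The one step that needs genuine care is the combinatorial bookkeeping: for each of the four sums one must identify which index coincidences are \emph{forced} by the off-diagonal vanishing of $\Ee(p_kc_m)$ and $\Ee(q_ld_n)$, and then count the tuples realising them; everything else is elementary. A more pedestrian alternative would be to expand $Z_N$ directly in the un-centred exponentials, splitting each double sum into its diagonal and off-diagonal parts; that works as well, but then one has to verify by hand that three of the four arising contributions coincide and telescope, whereas the centred computation discards all non-diagonal terms automatically.
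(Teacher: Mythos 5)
Your proof is correct, and it takes a slightly different --- and in fact cleaner --- route than the paper's. Both arguments start from the same structural observation, namely that $Z_N(s,t)$ is the empirical covariance of the exponentials $\ee^{\ii\scalp{s}{X_k}}$ and $\ee^{\ii\scalp{t}{Y_k}}$, and both exploit the independence of the $X$-sample from the $Y$-sample to factorise every expectation into an $X$-part times a $Y$-part. The difference lies in the centring. The paper centres each exponential at its \emph{empirical} mean $\frac1N\sum_l \ee^{\ii\scalp{s}{X_l}}$, which forces a ``lengthy but otherwise straightforward calculation'' of $\Ee\bigl[(\ee^{\ii\scalp{s}{X_k}}-\frac1N\sum_l\ee^{\ii\scalp{s}{X_l}})(\ee^{-\ii\scalp{s'}{X_j}}-\frac1N\sum_l\ee^{-\ii\scalp{s'}{X_l}})\bigr]$ followed by a case distinction $k=j$ versus $k\neq j$ when summing. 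You instead centre at the \emph{population} means $f_X(s)$, $\overline{f_X(s')}$, etc., after checking (correctly --- this is just shift-invariance of the empirical covariance) that this does not change $Z_N$. The payoff is that your building blocks $p_k, c_m$ are genuinely mean-zero and independent across distinct indices, so $\Ee(p_kc_m)$ vanishes off the diagonal and the whole computation reduces to counting matched index tuples ($N$, $N$, $N$, $N^2$), giving $\frac1N-\frac1{N^2}-\frac1{N^2}+\frac1{N^2}=\frac{N-1}{N^2}$ immediately; I verified the counts and the value $\Ee(p_kc_k)=f_X(s-s')-f_X(s)\overline{f_X(s')}$, and everything checks out. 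What the paper's version buys is that the empirically centred form of $Z_N$ is the one actually displayed and reused in the proof of Theorem~\ref{thm:2dconvtoGaussian}; what yours buys is the elimination of the case analysis and of the intermediate formula with the $\frac{N-1}{N}$ and $\frac1N$ coefficients. Either is a complete proof.
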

\begin{proof}
    Observe that
\begin{align*}
Z_N(s,t) &= \frac{1}{N}\sum_{k=1}^N
\ee^{\ii \scalp{s}{X_k}+ \ii\scalp{t}{Y_k}} - \frac{1}{N^2} \sum_{k,l=1}^N
\ee^{\ii \scalp{s}{X_k} + \ii \scalp{t}{Y_l}}
\\
&= \frac{1}{N}\sum_{k=1}^N
\Biggl(\ee^{\ii \scalp{s}{X_k}} - {\textstyle\frac{1}{N} \sum
\limits_{l=1}^N
\ee^{\ii \scalp{s}{X_l}}} \Biggr) \Biggl(\ee^{\ii \scalp{t}{Y_k}} - {\textstyle
\frac{1}{N} \sum\limits
_{l=1}^N
\ee^{\ii \scalp{t}{Y_l}}} \Biggr).
\end{align*}
    Using this formula and the independence of the random variables $(X_1,\dots,X_N)$ and $(Y_1,\dots,Y_N)$ yields
\begin{align*}
&\Ee\bigl(Z_N(s,t)\overline{Z_N\bigl(s',t'
\bigr)}\bigr)
\\
&= \frac{1}{N^2} \sum_{j,k=1}^N \Ee
\Biggl[ \Biggl(\ee^{\ii \scalp{s}{X_k}} - {\textstyle\frac{1}{N} \sum
\limits
_{l=1}^N \ee^{\ii \scalp{s}{X_l}}} \Biggr) \Biggl(
\ee^{\ii \scalp{t}{Y_k}} - {\textstyle\frac{1}{N} \sum\limits
_{l=1}^N
\ee^{\ii \scalp{t}{Y_l}}} \Biggr)
\\
&\qquad\qquad\qquad\qquad\mbox{}\times \Biggl(\ee^{-\ii \scalp{s'}{X_j}} - {\textstyle
\frac{1}{N} \sum\limits
_{l=1}^N
\ee^{-\ii \scalp{s'}{X_l}}} \Biggr) \Biggl(\ee^{-\ii \scalp{t'}{Y_j}} - {\textstyle
\frac{1}{N} \sum\limits
_{l=1}^N
\ee^{-\ii \scalp{t'}{Y_l}}} \Biggr) \Biggr]
\\
&= \frac{1}{N^2} \sum_{j,k=1}^N \Ee
\Biggl[ \Biggl(\ee^{\ii \scalp{s}{X_k}} - {\textstyle\frac{1}{N} \sum
\limits
_{l=1}^N \ee^{\ii \scalp{s}{X_l}}} \Biggr) \Biggl(
\ee^{-\ii \scalp{s'}{X_j}} - {\textstyle\frac{1}{N} \sum\limits
_{l=1}^N
\ee^{-\ii\scalp{s'}{X_l}}} \Biggr) \Biggr]
\\
&\qquad\qquad\qquad\qquad\mbox{}\times \Ee \Biggl[ \Biggl(\ee^{\ii \scalp{t}{Y_k}} - {
\textstyle\frac{1}{N} \sum\limits
_{l=1}^N
\ee^{\ii \scalp{t}{Y_l}}} \Biggr) \Biggl(\ee^{-\ii \scalp{t'}{Y_j}} - {\textstyle
\frac{1}{N} \sum\limits
_{l=1}^N
\ee^{-\ii \scalp{t'}{Y_l}}} \Biggr) \Biggr].
\end{align*}
    A lengthy but otherwise straightforward calculation shows that
\begin{align*}
&\Ee \Biggl[ \Biggl(\ee^{\ii \scalp{s}{X_k}} - {\textstyle\frac{1}{N} \sum
\limits
_{l=1}^N \ee^{\ii \scalp{s}{X_l}}} \Biggr) \Biggl(
\ee^{-\ii \scalp{s'}{X_j}} - {\textstyle\frac{1}{N} \sum\limits
_{l=1}^N
\ee^{-\ii \scalp{s'}{X_l}}} \Biggr) \Biggr]
\\
&\qquad= \Ee \bigl(\ee^{\ii \scalp{s}{X_k} - \ii \scalp{s'}{X_j}} \bigr) - \frac{N-1}{N}
f_X(s)\overline{f_X\bigl(s'\bigr)} -
\frac{1}N f_X\bigl(s-s'\bigr),
\end{align*}
    and an analogous formula holds for the $Y_i$. Summing over $k,j=1,\dots,N$ and distinguishing between the cases $k=j$ and $k\neq j$ finally gives
\begin{align*}
&\Ee\bigl(Z_N(s,t)\overline{Z_N\bigl(s',t'
\bigr)}\bigr)
\\
&= \biggl(\frac{(N-1)^2}{N^3} + \frac{N-1}{N^3} \biggr) \bigl(f_X
\bigl(s-s'\bigr)-f_X(s)\overline{f_{X}
\bigl(s'\bigr)}\bigr) \xch{\bigl(f_Y\bigl(t-t'
\bigr)-f_Y(t)\overline{f_{Y}\bigl(t'\bigr)}
\bigr),}{\bigl(f_Y\bigl(t-t'
\bigr)-f_Y(t)\overline{f_{Y}\bigl(t'\bigr)}
\bigr)}
\end{align*}
    and the lemma follows.
\end{proof}

\begin{remark}\label{rem:momcond}
a)  If we symmetrize the expression for $N\cdot\VN^2$ in a suitable way, we can transform it into a degenerate U-statistic. For random variables with bounded second moments we can then use classical results to show the convergence to $\sum_{k=1}^\infty \lambda_k X_k^2$, where $\lambda_k$ are some coefficients and $X_k$ are i.i.d.  standard normal random variables, see e.g. Serfling~\cite[Sec.~5.5.2]{Serf2009} or Witting \& M\"{u}ller-Funk~\cite[Satz~7.183]{WittMuel1995}. In order to relax the bounds on the moments one would have to show convergence of the corresponding $\lambda_k$.

b)      The log-moment condition can be slightly relaxed, but this leads to a much more involved expression, cf. Cs\"{o}rg{\H{o}}~\cite{Csoe1985}, and for the case $\epsilon = 0$ a counterexample is known, see Cs\"{o}rg{\H{o}}~\cite[p.~133]{Csor1981}. Unfortunately, the convergence of the characteristic functions is stated without any moment condition in Murata~\cite[Thm.~4]{Mura2001} which is based on Feuerverger \& Mureika~\cite[Thm.~3.1]{FeueMure1977}.
\end{remark}

\begin{corollary}\label{est-25}
    Assume that $X$, $Y$ are non-degenerate with $\Ee \varPhi(X)+\Ee \varPsi(Y)<\infty$ and set
\begin{gather*}
a_N:= \frac{1}{N^2} \sum_{i,k = 1}^N
\varPhi(X_i-X_k) \et b_N:=
\frac{1}{N^2} \sum_{j,l = 1}^N
\varPsi(Y_j-Y_l).
\end{gather*}
\begin{enumerate}
    \item\label{est-25-a} If $X$, $Y$ are independent random variables
        satisfying the log-moment conditions $\Ee
        \log^{1+\epsilon}(1+|X|^2) <\infty$ and $\Ee
        \log^{1+\epsilon}(1+|Y|^2) <\infty$ for some $\epsilon>0$, then
\begin{equation}
\label{est-e60} \frac{N\cdot\VN^2}{a_Nb_N} \xrightarrow[N\to\infty]{d} \sum
_{k=1}^\infty \alpha_k
X_k^2,
\end{equation}
     where $X_i$ are independent standard normal random variables and the
     coefficients $\alpha_k$ satisfy $\alpha_k \geq 0$ with
     $\sum_{k=1}^\infty \alpha_k = 1$.

    \item\label{est-25-b} If the random variables $X$ and $Y$ are not
        independent, then
\begin{equation}
\frac{N\cdot\VN^2}{a_Nb_N} \xrightarrow[N\to\infty]{d} \infty.
\end{equation}
    \end{enumerate}
\end{corollary}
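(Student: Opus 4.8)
The plan is to obtain the corollary by combining the distributional limit of $N\cdot\VN^2$ under independence (Theorem~\ref{thm:2dconvtoGaussian}), the strong consistency of $\VN^2$ (Theorem~\ref{thm:VNconsistent}) and the characterisation of independence in Lemma~\ref{gdc-07}, together with the almost sure behaviour of the normalisers $a_N,b_N$; the only genuinely new ingredient is a spectral representation of the Gaussian quadratic functional $\|\G\|^2_{\mu\otimes\nu}$ as a weighted sum of independent squared normals. First I would record that, since $X$ and $Y$ are non-degenerate and~-- by our standing convention~-- $0$ is the only zero of $\varPhi$ and of $\varPsi$, the constant $c:=\Ee\varPhi(X-X')\,\Ee\varPsi(Y-Y')$ is strictly positive; it is finite because $\Ee\varPhi(X-X')\le4\Ee\varPhi(X)<\infty$ and similarly for $\varPsi$, see Proposition~\ref{ar-31}.\ref{ar-31-b}. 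As $\varPhi(0)=\varPsi(0)=0$, the diagonal terms in $a_N=\tfrac1{N^2}\sum_{i,k}\varPhi(X_i-X_k)$ and in $b_N$ drop out, so $a_N$ and $b_N$ are, up to the factor $\tfrac{N-1}{N}$, the $U$-statistics of the integrable kernels $(x,x')\mapsto\varPhi(x-x')$ and $(y,y')\mapsto\varPsi(y-y')$; by the strong law of large numbers for $U$-statistics, $a_N\to\Ee\varPhi(X-X')$ and $b_N\to\Ee\varPsi(Y-Y')$ almost surely, hence $a_Nb_N\to c$ a.s.

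For part a), the moment hypotheses imposed there are exactly the conditions \eqref{est-e20} of Theorem~\ref{thm:2dconvtoGaussian}, so $N\cdot\VN^2\xrightarrow{d}\|\G\|^2_{\mu\otimes\nu}$; combining this with $a_Nb_N\to c>0$ a.s., Slutsky's theorem yields $\tfrac{N\cdot\VN^2}{a_Nb_N}\xrightarrow{d}c^{-1}\|\G\|^2_{\mu\otimes\nu}$. It then remains to identify the law of $\|\G\|^2_{\mu\otimes\nu}$, which is the heart of the matter. I would view $\G$, through its real and imaginary parts, as a centred Gaussian random element of the separable real Hilbert space $\mathcal H:=L^2(\mu\otimes\nu;\real)^2\cong L^2(\mu\otimes\nu;\compp)$; this is legitimate since $\Ee\|\G\|^2_{\mu\otimes\nu}=\Ee\varPhi(X-X')\,\Ee\varPsi(Y-Y')=c<\infty$, which is precisely \eqref{eq:EG}. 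The covariance operator $\mathcal K$ of $\G$ on $\mathcal H$ is then positive and nuclear with $\trace\mathcal K=c$; writing $\lambda_1\ge\lambda_2\ge\dots\ge0$ for its eigenvalues repeated with multiplicity and $(e_k)$ for a corresponding orthonormal eigenbasis, the coordinates $\langle\G,e_k\rangle$ are independent centred Gaussian with variances $\lambda_k$, so $\|\G\|^2_{\mu\otimes\nu}=\sum_k\langle\G,e_k\rangle^2$ has the distribution of $\sum_k\lambda_kX_k^2$ for i.i.d.\ standard normal $X_k$, and $\sum_k\lambda_k=\trace\mathcal K=c$. Setting $\alpha_k:=\lambda_k/c$ gives $\alpha_k\ge0$, $\sum_k\alpha_k=1$ and \eqref{est-e60}.

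Part b) is much softer: if $X$ and $Y$ are not independent then $V^2(X,Y)>0$ by Lemma~\ref{gdc-07}, and the standing assumption $\Ee\varPhi(X)+\Ee\varPsi(Y)<\infty$ lets me apply the consistency Theorem~\ref{thm:VNconsistent}, so $\VN^2\to V^2(X,Y)>0$ and hence $N\cdot\VN^2\to\infty$ a.s.; since $a_Nb_N\to c\in(0,\infty)$ a.s., the quotient $\tfrac{N\cdot\VN^2}{a_Nb_N}$ diverges almost surely, a fortiori in distribution. I expect the main obstacle to be the spectral step in part a): one must check that $\G$ genuinely defines a Gaussian random element of $L^2(\mu\otimes\nu)$~-- i.e.\ that it admits a jointly measurable version with almost surely finite $L^2(\mu\otimes\nu)$-norm, which follows from the path-continuity on compacta established in the proof of Theorem~\ref{thm:2dconvtoGaussian} together with $\Ee\|\G\|^2_{\mu\otimes\nu}<\infty$~-- and one must pass to the real pair $(\Re\G,\Im\G)$ before diagonalising the covariance operator, $\G$ being complex-valued.
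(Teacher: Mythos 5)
Your proposal is correct and follows essentially the same route as the paper, which simply defers to the proofs of Corollary~2 and Theorem~6 of Sz\'ekely et al.\ after noting the one key fact you also isolate, namely that $a_Nb_N$ converges to $\Ee\varPhi(X-X')\,\Ee\varPsi(Y-Y')$, the limit of the expectation of the numerator. You merely write out in full the details the paper outsources (the SLLN for the normalisers, Slutsky, and the Karhunen--Lo\`eve/spectral representation of $\|\G\|^2_{\mu\otimes\nu}$ with $\sum_k\lambda_k=\Ee\|\G\|^2_{\mu\otimes\nu}$, which yields $\sum_k\alpha_k=1$), and these details are sound.
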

\begin{proof}
    We can almost literally follow the proof of Corollary 2 and Theorem 6 in Sz\'ekely et al.~\cite{SzekRizzBaki2007}. Just note that $a_Nb_N$ is an estimator for $\Ee\varPhi(X-X') \cdot \Ee\varPsi(Y-Y')$ and this is, by \eqref{eq:ExpZn2-b} and \eqref{eq:EG}, the limit of the expectation of the numerator.
\end{proof}

\section{Remarks on the uniqueness of the Cauchy distance covariance}\label{uni}
It is a natural question whether it is possible to extend distance covariance
further by taking a measure $\rho$ in the definition \eqref{eq:v2-def} which
does not factorize. To ensure the finiteness of $V(X,Y)$ one still has to
assume
\begin{equation}
\label{uni-e02} \int \bigl(1\wedge|s|^2\bigr) \bigl(1
\wedge|t|^2\bigr) \,\rho(\dds,\ddt) <\infty,
\end{equation}
see also Sz\'ekely \& Rizzo \cite[Eq.~(2.4)]{SzekRizz2012} and
\eqref{eq:v2-def} at the beginning of Section~\ref{sec:gdc-infinite}.
Furthermore, it is no restriction to assume that $\rho$ is symmetric, since
the integrand in \eqref{eq:v2-def} is symmetric, hence we can always
symmetrize any non-symmetric measure $\rho$ without changing the value of the
integral. Thus, the function
\begin{equation}
\label{uni-e04} \varTheta(x,y) := \int_{\real^n}\int
_{\real^m} \bigl(1-\cos(\scalp xs)\bigr) \bigl(1-\cos(\scalp xt)
\bigr) \, \rho(\dds,\ddt)
\end{equation}
is well-defined and symmetric in each variable. The corresponding generalized
distance covariance $V^2(X,Y)$ can be expressed by \eqref{eq:v2-ThetaXY}, if
the expectations on the right-hand side are finite. Note, however, that the
nice and computationally feasible representations of $V^2(X,Y)$ make
essential use of the factorization of $\rho$ which means that they are no
longer available in this setting.

Let $X$ and $Y$ be random variables with values in $\real^m$ and $\real^n$,
respectively, such that for some $x\in\real^m$ and $y\in\real^n$
\begin{equation}
\label{uni-e06} \Pp(X=0) = \Pp(X=x) = \Pp(Y=0) = \Pp(Y=y) = \frac{1}{2}.
\end{equation}
A direct calculation of \eqref{eq:v2-ThetaXY}, using $\varTheta(0,y) =
\varTheta(x,0) = 0$, gives
\begin{equation}
\label{uni-e08} V^2(X,Y) = \gamma\cdot \varTheta\xch{(x,y),}{(x,y)}
\end{equation}
with
\begin{align}
\gamma&{} = 2\Pp(X_1\neq
X_5,Y_1\neq Y_6) - \Pp(X_1
\neq X_4,Y_1\neq Y_4)\notag\\
&\quad{} - \Pp(X_2\neq X_5)\Pp\xch{(Y_3\neq Y_6),}{(Y_3\neq Y_6)}
\label{uni-e10}
\end{align}
where $(X_i,Y_i), i=1,\dots,6$, are i.i.d.  copies of the random vector
$(X,Y)$.

Now suppose that $V^2(X,Y)$ is homogeneous and/or rotationally invariant,
i.e. for some $\alpha,\beta\in (0,2)$, all scalars $a,b>0$ and  orthogonal
matrices $A\in\real^{n\times n}, B\in\real^{m\times m}$
\begin{align}
V^2(aX,bY) &=\label{eq:Vhomo} a^\alpha b^\beta
V^2\xch{(X,Y),}{(X,Y)}
\\
V^2(AX,BY) &=\label{eq:Vrotin} V^2(X,Y)
\end{align}
hold. The homogeneity, \eqref{eq:Vhomo}, yields
\begin{equation}
\varTheta(x,y) = |x|^\alpha|y|^\beta \varTheta \xch{\bigl(
\tfrac{x}{|x|},\tfrac{y}{|y|} \bigr),}{\bigl(
\tfrac{x}{|x|},\tfrac{y}{|y|} \bigr)}
\end{equation}
and the rotational invariance, \eqref{eq:Vrotin}, shows that
$\varTheta(x/|x|, y/|y|)$ is a constant. In particular, homogeneity of degree
$\alpha=\beta=1$ and rotational invariance yield that  $\varTheta(x,y) =
\text{const}\cdot|x|\cdot |y|$. Since the L\'evy--Khintchine formula
furnishes a one-to-one correspondence between the cndf and its L\'evy
triplet, see (the comments following) Theorem~\ref{ar-03}, this determines
$\rho$ uniquely: it factorizes into two Cauchy L\'evy measures. This means
that~-- even in a larger class of weights~-- the assumptions \eqref{eq:Vhomo}
and \eqref{eq:Vrotin} imply a unique (up to a constant) choice of weights,
and we have recovered Sz\'ekely-and-Rizzo's uniqueness result from
\cite{SzekRizz2012}.
\begin{lemma}\label{uni-03}
    Let $V^2(X,Y) := \|f_{X,Y}-f_X\otimes f_Y\|^2_{L^2(\rho)}$ be a generalized distance covariance as in Definition~\ref{ar-05} and assume that the symmetric measure $\rho$ satisfies the integrability condition \eqref{uni-e02}. If $V^2(X,Y)$ is homogeneous of order $\alpha\in (0,2)$ and $\beta\in (0,2)$ and rotationally invariant in each argument, then the measure $\rho$ defining $V^2(X,Y)$ is~-- up to a multiplicative constant~-- of the form
\begin{gather*}
\rho(\dds,\ddt) = c(\alpha,m)c(\beta,n)|s|^{-\alpha-m}|t|^{-\beta-n}\,
\dds\,\ddt.
\end{gather*}
    Moreover, $V^2(X,Y)$ can be represented by \eqref{eq:v2-ThetaXY} with $\varTheta(x,y)=C\cdot|x|^\alpha\cdot|y|^\beta$.
\end{lemma}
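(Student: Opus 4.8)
The plan is to read off the measure $\rho$ by evaluating $V^2$ on the simplest possible pairs of random variables, to use homogeneity and rotation invariance to pin down the auxiliary function $\varTheta$ of \eqref{uni-e04}, and then to invert the relation between $\varTheta$ and $\rho$. Concretely, for fixed $x\in\real^m\setminus\{0\}$ and $y\in\real^n\setminus\{0\}$ I would specialise to the comonotone two-atom law $\Pp((X,Y)=(0,0))=\Pp((X,Y)=(x,y))=\tfrac12$, which has the marginals \eqref{uni-e06}. A one-line computation of characteristic functions gives $f_{X,Y}(s,t)-f_X(s)f_Y(t)=\tfrac14(1-\ee^{\ii\scalp xs})(1-\ee^{\ii\scalp yt})$, hence $V^2(X,Y)=\tfrac14\varTheta(x,y)$ with $\varTheta$ as in \eqref{uni-e04}; this is \eqref{uni-e08} with the explicit value $\gamma=\tfrac14$, and the crucial feature is that the prefactor does not depend on $x,y$.

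Next I would exploit that $aX,bY$ (for scalars $a,b>0$) and $AX,BY$ (for orthogonal $A\in O(m)$, $B\in O(n)$) have exactly the same comonotone two-atom structure, only with atom $(ax,by)$, respectively $(Ax,By)$. Hence the assumptions \eqref{eq:Vhomo} and \eqref{eq:Vrotin}, combined with $V^2=\tfrac14\varTheta$, become the scalar identities $\varTheta(ax,by)=a^\alpha b^\beta\varTheta(x,y)$ and $\varTheta(Ax,By)=\varTheta(x,y)$. Taking $a=|x|^{-1}$, $b=|y|^{-1}$ yields $\varTheta(x,y)=|x|^\alpha|y|^\beta\varTheta(x/|x|,y/|y|)$, and since $O(m)\times O(n)$ acts transitively on $S^{m-1}\times S^{n-1}$, the value $\varTheta(x/|x|,y/|y|)$ is a single constant $C:=\varTheta(e_1,e_1)\ge0$. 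Thus $\varTheta(x,y)=C|x|^\alpha|y|^\beta$ for all $x,y$ (the case $x=0$ or $y=0$ being consistent because $\alpha,\beta>0$); together with the representation \eqref{eq:v2-ThetaXY} this already gives the ``moreover'' assertion of the lemma.

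The substantive step is to recover $\rho$ from the identity $\iint(1-\cos\scalp xs)(1-\cos\scalp yt)\,\rho(\dds,\ddt)=C|x|^\alpha|y|^\beta$ for all $x,y$, and here I would apply the uniqueness of L\'evy measures (Theorem~\ref{ar-03} together with Table~\ref{fig-cndf}) \emph{twice}. Freezing $y$, the integrability condition \eqref{uni-e02} and the elementary bound $1-\cos\scalp yt\le C_y(1\wedge|t|^2)$ show that $\rho^{(y)}(\dds):=\int_{\real^n}(1-\cos\scalp yt)\,\rho(\dds,\ddt)$ is a symmetric L\'evy measure on $\real^m\setminus\{0\}$; by construction its associated cndf is $x\mapsto C|y|^\beta|x|^\alpha$, so Theorem~\ref{ar-03} and Table~\ref{fig-cndf} force $\rho^{(y)}(\dds)=C\,c(\alpha,m)|y|^\beta|s|^{-\alpha-m}\,\dds$. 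Testing this against an arbitrary $g\in C_c(\real^m\setminus\{0\})$ and using Tonelli, the measure $\rho_g(\ddt):=\int_{\real^m}g(s)\,\rho(\dds,\ddt)$ is again a L\'evy measure on $\real^n\setminus\{0\}$ whose cndf equals $\bigl(\int g(s)|s|^{-\alpha-m}\dds\bigr)C\,c(\alpha,m)|y|^\beta$, so a second application of the same uniqueness result gives $\rho_g(\ddt)=C\,c(\alpha,m)c(\beta,n)\bigl(\int g(s)|s|^{-\alpha-m}\dds\bigr)|t|^{-\beta-n}\,\ddt$. Pairing once more with $h\in C_c(\real^n\setminus\{0\})$ shows that $\rho$ and $C\,c(\alpha,m)c(\beta,n)|s|^{-\alpha-m}|t|^{-\beta-n}\,\dds\,\ddt$ agree on all products $g\otimes h$, hence as Radon measures on $(\real^m\setminus\{0\})\times(\real^n\setminus\{0\})$; absorbing $C$ into the constant gives the claimed form.

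The main obstacle is precisely this last passage from the scalar identity for $\varTheta$ back to the full $(m+n)$-dimensional measure $\rho$. The device that makes it work is to ``freeze'' one block of variables, recognise an $\real^m$- (resp.\ $\real^n$-) cndf whose L\'evy measure is already known to be unique, namely the $\alpha$- (resp.\ $\beta$-) stable one, and iterate; one has to verify — this is where \eqref{uni-e02} is used — that the auxiliary marginals $\rho^{(y)}$ and $\rho_g$ really are L\'evy measures, and one should record the harmless indeterminacy of $\rho$ on the coordinate hyperplanes $\{s=0\}\cup\{t=0\}$, on which the integrand of \eqref{eq:v2-def} vanishes identically, so that ``up to a multiplicative constant'' is to be read modulo mass supported there.
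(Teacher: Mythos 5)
Your proposal is correct and follows essentially the same route as the paper: evaluate $V^2$ on two--point laws to identify $\varTheta$ from \eqref{uni-e04} (the paper leaves the coupling of the marginals \eqref{uni-e06} generic and records the constant \eqref{uni-e10}, while you commit to the comonotone coupling and compute $\gamma=\tfrac14$ directly from the definition), then use homogeneity and rotational invariance to force $\varTheta(x,y)=C|x|^\alpha|y|^\beta$, and finally invoke L\'evy--Khintchine uniqueness to recover $\rho$. Your two-step ``freeze one block of variables'' argument for this last step is a useful elaboration of the paper's one-line appeal to the one-to-one correspondence between cndfs and L\'evy triplets, which does not literally apply to the product kernel in \eqref{uni-e04}; the only point worth adding is that one should test against \emph{even} $g$ (resp.\ $h$) so that the auxiliary measures $\rho_g$ are symmetric before applying that uniqueness.
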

For completeness, let us mention that the constants $c(\alpha,m)$ and
$c(\beta,n)$ are of the form
\begin{gather*}
c(\alpha,m) = \alpha 2^{\alpha-1}\pi^{-m/2}\varGamma \bigl(
\tfrac{\alpha+m}{2} \bigr) /\varGamma \bigl(1-\tfrac{\alpha}2 \bigr),
\end{gather*}
see e.g. \cite[p.~34, Example~2.4.d)]{BoetSchiWang2013} or \cite[p.~184,
Exercise~18.23]{BergFors75}.

\section{Generalized distance correlation}\label{corr}

We continue our discussion in the setting of Section~\ref{sec:gdc-infinite}.
Let $\rho = \mu\otimes\nu$ and assume that $\mu$ and $\nu$ are symmetric
L\'evy measures on $\rmohne$ and $\rnohne$, each with full support. For $m$-
and $n$-dimensional random variables $X$ and $Y$ the generalized distance
covariance is, cf.~Definition \ref{gdc-05},
\begin{equation}
\label{corr-e02} V(X,Y) = \sqrt{\iint |f_{(X,Y)}(s,t)-f_X(s)f_Y(t)|^2
\,\mu(\dds)\,\nu(\ddt)}.
\end{equation}
We set
\begin{align}
    V(X):=&\notag \sqrt{\int |f_{(X,X)}(s,t)-f_X(s)f_X(t)|^2 \,\mu(\dds)\,\mu(\ddt)}\\
    =&\label{corr-e04} \|f_{(X,X)}-f_X\otimes f_X\|_{L^2(\mu \otimes \mu)},\\
    V(Y):=&\notag \sqrt{\int |f_{(Y,Y)}(s,t)-f_Y(s)f_Y(t)|^2 \,\nu(\dds)\,\nu(\ddt)}\\
    =&\label{corr-e06} \|f_{(Y,Y)}-f_Y\otimes f_Y\|_{L^2(\nu \otimes \nu)},
\end{align}
and define \emph{generalized distance correlation} as
\begin{equation}
\label{corr-e10} R(X,Y):= \begin{cases}
        \dfrac{V(X,Y)}{\sqrt{V(X) V(Y)}}, & \text{if\ \ } V(X)\cdot V(Y)>0,\\
        0, & \text{otherwise.}
    \end{cases}
\end{equation}
Using the Cauchy--Schwarz inequality it follows from \eqref{gdc-e24} that,
whenever $R(X,Y)$ is well defined, one has
\begin{equation}
\label{corr-e12} 0\leq R(X,Y) \leq 1 \et R(X,Y) = 0\text{\ iff\ } X, Y \text{\ are
independent.}
\end{equation}

The \emph{sample distance correlation} is given by
\begin{equation}
\label{corr-e20} \RN\bigl((x_1,y_1),\dots,(x_N,y_N)
\bigr) = \Biggl(\frac%
 {\frac{1}{N^2} \sum
_{k,l=1}^N A_{kl}B_{kl}}
 {\sqrt{{ \frac{1}{N^2} \sum_{k,l=1}^N
A_{kl}A_{kl}}} \cdot \sqrt{{\frac{1}{N^2}
\sum_{k,l=1}^N B_{kl}B_{kl}}}}
 \Biggr)^{\frac{1} {2}},
\end{equation}
where we use the notation introduced in Lemma~\ref{est-05}.

\begin{example}\label{corr-11}
    For standard normal random variables $X, Y$ with $\rho = \Cor(X,Y)$, $\varPhi(x) = |x|$ and $\varPsi(y) = |y|$ the distance correlation becomes
\begin{equation}
\label{corr-e22} R(X,Y) = \biggl(\frac{\sqrt{1-\rho^2}-\sqrt{4-\rho^2} + \rho(\arcsin \rho - \arcsin \frac{\rho}{2}) + 1}{1-\sqrt{3}+\pi/3} \biggr)^{1/2} \leq |
\rho|,
\end{equation}
    cf.~Sz\'ekely \& Rizzo \cite[Thm.~6]{SzekRizz2009}.
\end{example}

\section{Gaussian covariance}\label{gauss}
Let us finally show that the results on Brownian covariance of Sz\'ekely \&
Rizzo \cite[Sec.~3]{SzekRizz2009} do have an analogue for the generalized
distance covariance.

For a symmetric L\'evy measure with corresponding continuous negative
definite function $\varPhi :\real^m \to \real$ let $G_\varPhi$ be the
Gaussian field indexed by $\real^m$ with
\begingroup
\abovedisplayskip=6.5pt
\belowdisplayskip=6.5pt
\begin{equation}
\label{gauss-e02} \Ee G_\varPhi(x) = 0 \et \Ee\bigl(G_\varPhi(x)G_\varPhi
\bigl(x'\bigr)\bigr) = \varPhi(x) + \varPhi\bigl(x'
\bigr) - \varPhi\bigl(x-x'\bigr).
\end{equation}
Analogously we define the random field $G_\varPsi$.

For a random variable $Z$ with values in $\real^d$ and for a Gaussian random
field $G$ indexed by $\real^d$ we set
\begin{equation}
\label{gauss-e04} Z^G := G(Z) - \Ee\bigl(G(Z) \mid G\bigr).
\end{equation}
\begin{definition}
    Let $G_\varPhi$, $G_\varPsi$ be mean-zero Gaussian random fields indexed by $\real^m$ and $\real^n$ and with covariance structure given by the cndfs $\varPhi$ and $\varPsi$, respectively. For any two $m$- and $n$-dimensional random variables $X$ and $Y$ the \emph{Gaussian covariance} is defined as
\begin{equation}
\label{gauss-e06} G^2(X,Y) := \Cov_{G_\varPhi,G_\varPsi}^2(X,Y)
:= \Ee\bigl(X_1^{G_\varPhi} X_2^{G_\varPhi}
Y_1^{G_\varPsi} Y_2^{G_\varPsi}\bigr),
\end{equation}
    where $(X_1,Y_1), (X_2,Y_2)$ are i.i.d.  copies of $(X,Y)$.
\end{definition}

We can now identify Gaussian covariance and generalized distance covariance.
\begin{theorem}[Gaussian covariance is generalized distance covariance]
Assume that $X$ and $Y$ satisfy $\Ee\varPhi(X)+\Ee\varPsi(Y)<\infty$. If
$G_\varPhi$ and $G_\varPsi$ are independent, then
\begin{equation}
\label{gauss-e10} G^2(X,Y) = V^2(X,Y).
\end{equation}
\end{theorem}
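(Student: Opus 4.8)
\emph{Proof proposal.}
The plan is to reduce \eqref{gauss-e10} to the ``doubly centered'' representation \eqref{eq:Vdoublecentered} of Corollary~\ref{gdc-11}. Throughout recall that $G_\varPhi$ and $G_\varPsi$ are independent of each other and of $(X,Y)$ (hence of the copies $(X_i,Y_i)$). Since $(X_1,X_2)$ is independent of $G_\varPhi$, the conditional expectation in \eqref{gauss-e04} is
\begin{equation*}
\Ee\bigl(G_\varPhi(X_i)\mid G_\varPhi\bigr)=\int_{\real^m}G_\varPhi(x)\,\Pp_X(\ddx)=:m_\varPhi ,
\end{equation*}
a random variable depending only on $G_\varPhi$; the hypothesis $\Ee\varPhi(X)<\infty$ together with \eqref{gauss-e02} gives $\Ee\bigl(G_\varPhi(X)^2\bigr)=2\,\Ee\varPhi(X)<\infty$, so $m_\varPhi$ and $X_i^{G_\varPhi}=G_\varPhi(X_i)-m_\varPhi$ are square integrable and all the manipulations below are legitimate. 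First I would condition \eqref{gauss-e06} on the $\sigma$-field generated by $(X_1,Y_1),(X_2,Y_2)$: because $G_\varPhi\perp G_\varPsi$ and both are independent of $(X,Y)$, the factors $X_1^{G_\varPhi}X_2^{G_\varPhi}$ and $Y_1^{G_\varPsi}Y_2^{G_\varPsi}$ are conditionally independent given that $\sigma$-field, and moreover each conditional expectation depends only on the matching coordinates, so that
\begin{equation*}
G^2(X,Y)=\Ee\Bigl[\Ee\bigl(X_1^{G_\varPhi}X_2^{G_\varPhi}\mid X_1,X_2\bigr)\cdot\Ee\bigl(Y_1^{G_\varPsi}Y_2^{G_\varPsi}\mid Y_1,Y_2\bigr)\Bigr].
\end{equation*}

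Next, for fixed values $x_1,x_2$ the pair $\bigl(G_\varPhi(x_1)-m_\varPhi,\,G_\varPhi(x_2)-m_\varPhi\bigr)$ is centred Gaussian, and by \eqref{gauss-e02} its covariance is obtained by applying to the kernel $K(x,x')=\varPhi(x)+\varPhi(x')-\varPhi(x-x')$ the double-centering operator $g\mapsto g(x_1,x_2)-\Ee g(x_1,X)-\Ee g(X,x_2)+\Ee g(X,X')$, where $X,X'$ are independent copies of $X$. The additively separable part $\varPhi(x)+\varPhi(x')$ of $K$ is annihilated by this operator, so only the contribution of $-\varPhi(x_1-x_2)$ survives; evaluating at $(X_1,X_2)$ and using the symmetry $\varPhi(-x)=\varPhi(x)$ one recognizes exactly $-\doverline{\varPhi}$ in the notation of Corollary~\ref{gdc-11} (with the copy $(X_2,Y_2)$ in the role of $(X_4,Y_4)$). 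The analogous computation for $\varPsi$ gives $\Ee\bigl(Y_1^{G_\varPsi}Y_2^{G_\varPsi}\mid Y_1,Y_2\bigr)=-\doverline{\varPsi}$, and therefore
\begin{equation*}
G^2(X,Y)=\Ee\bigl[\doverline{\varPhi}\cdot\doverline{\varPsi}\bigr].
\end{equation*}

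It then remains to identify the right-hand side with $V^2(X,Y)$. For bounded $\varPhi,\varPsi$ (i.e.\ finite L\'evy measures) this is precisely \eqref{eq:Vdoublecentered}. To reach the general case I would truncate as in \eqref{eq:munu-trunc}, so that $\mu_\epsilon\uparrow\mu$, $\nu_\epsilon\uparrow\nu$, hence $V_\epsilon^2(X,Y)\uparrow V^2(X,Y)$ by monotone convergence, while the computation above applied to $\varPhi_\epsilon,\varPsi_\epsilon$ together with Corollary~\ref{gdc-11} yields $\Ee[\doverline{\varPhi_\epsilon}\cdot\doverline{\varPsi_\epsilon}]=V_\epsilon^2(X,Y)$. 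Since $-\doverline{\varPhi_\epsilon}$ is, for fixed $X_1,X_2$, a covariance of a centred Gaussian pair, Cauchy--Schwarz gives $|\doverline{\varPhi_\epsilon}|\le 2\sqrt{\Ee\varPhi(X_1-X)\cdot\Ee\varPhi(X_2-X)}$ (where $\Ee\varPhi(X_i-X)$ denotes integration of $\varPhi(X_i-\cdot)$ against $\Pp_X$), uniformly in $\epsilon$, because the variance of $G_{\varPhi_\epsilon}(x)-m_{\varPhi_\epsilon}$ is at most $\Ee(G_{\varPhi_\epsilon}(x)-G_{\varPhi_\epsilon}(X))^2=2\,\Ee\varPhi_\epsilon(x-X)\le 2\,\Ee\varPhi(x-X)$. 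The resulting bound $|\doverline{\varPhi_\epsilon}\cdot\doverline{\varPsi_\epsilon}|\le 4\sqrt{\Ee\varPhi(X_1-X)\Ee\varPsi(Y_1-Y)}\cdot\sqrt{\Ee\varPhi(X_2-X)\Ee\varPsi(Y_2-Y)}$ has finite expectation under \eqref{eq:moments1} (apply Cauchy--Schwarz once more, together with \eqref{ar-e11} and Proposition~\ref{ar-31}.\ref{ar-31-b}), so dominated convergence gives $\Ee[\doverline{\varPhi_\epsilon}\cdot\doverline{\varPsi_\epsilon}]\to\Ee[\doverline{\varPhi}\cdot\doverline{\varPsi}]$; chaining the two limits yields \eqref{gauss-e10}.

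The step I expect to be the main obstacle is the last one: the hypothesis \eqref{eq:moments1} is the same weak condition that guarantees finiteness of $V^2$, and in particular it does \emph{not} force $\Ee(\varPhi(X)\varPsi(Y))<\infty$, so the individual random variables $\doverline{\varPhi},\doverline{\varPsi}$ need not be square integrable. What saves the argument is that $G_\varPhi$ is Gaussian, so its second moments control the relevant fourth moments; this is exactly what produces the $\epsilon$-uniform integrable dominating function above and, by the same estimate, shows that $G^2(X,Y)$ is finite to begin with, via $\Ee|\doverline{\varPhi}\cdot\doverline{\varPsi}|\le\Ee\varPhi(X-X')\cdot\Ee\varPsi(Y-Y')\le 16\,\Ee\varPhi(X)\cdot\Ee\varPsi(Y)$, cf.\ Lemma~\ref{thm:v2bound}.
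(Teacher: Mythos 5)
Your proposal is correct, and its core is the same as the paper's: condition on $(X_1,X_2,Y_1,Y_2)$, use the independence of $G_\varPhi$ and $G_\varPsi$ to factor the conditional expectation, compute $\Ee(X_1^{G_\varPhi}X_2^{G_\varPhi}\mid X_1,X_2)$ from the covariance kernel $\varphi(x,x')=\varPhi(x)+\varPhi(x')-\varPhi(x-x')$ by double centering (the separable part cancels, leaving $-\doverline{\varPhi}$), and land on $\Ee[\doverline{\varPhi}\cdot\doverline{\varPsi}]$; the integrability needed for the conditioning step is secured in both arguments by (conditional) Cauchy--Schwarz together with $\Ee|X^{G_\varPhi}|^2\leq 4\Ee\varPhi(X)$. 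Where you genuinely add something is the last step. The paper simply declares that the computation ``turns \eqref{eq:Wcondtioning} into \eqref{eq:Vdoublecentered}'', but \eqref{eq:Vdoublecentered} is proved in Corollary~\ref{gdc-11} only for finite L\'evy measures, and the general extension in Theorem~\ref{gdc-21} of the related representation \eqref{eq:v2-cond} is stated under the stronger hypothesis \eqref{eq:moments2}, whereas the theorem here assumes only \eqref{eq:moments1}. Your truncation via \eqref{eq:munu-trunc} plus the $\epsilon$-uniform dominating function $4\sqrt{\Ee\varPhi(X_1-X)\Ee\varPsi(Y_1-Y)}\cdot\sqrt{\Ee\varPhi(X_2-X)\Ee\varPsi(Y_2-Y)}$ --- obtained from the Gaussian interpretation of $-\doverline{\varPhi_\epsilon}$ as a conditional covariance --- closes exactly this gap and shows that $\Ee[\doverline{\varPhi}\cdot\doverline{\varPsi}]=V^2(X,Y)$ under \eqref{eq:moments1} alone, even though $\doverline{\varPhi}$ and $\doverline{\varPsi}$ need not be individually square integrable. (Only a cosmetic remark: tracking the constants in that domination gives $\Ee|\doverline{\varPhi}\cdot\doverline{\varPsi}|\leq 4\,\Ee\varPhi(X-X')\cdot\Ee\varPsi(Y-Y')$ rather than the bound without the factor $4$ that you state at the end; this does not affect the argument.)
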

\endgroup

\begin{proof}
    The proof is similar to Sz\'ekely \& Rizzo \cite[Thm.~8]{SzekRizz2009}. By conditioning and the independence of $G_\varPhi$ and $G_\varPsi$ we see
\begin{align} &\Ee \bigl(X_1^{G_\varPhi}
X_2^{G_\varPhi} Y_1^{G_\varPsi}
Y_2^{G_\varPsi} \bigr)
\notag\\
&= \Ee \bigl(\Ee \bigl(X_1^{G_\varPhi} X_2^{G_\varPhi}
\mid X_1,X_2,Y_1,Y_2 \bigr) \cdot
\Ee \bigl(Y_1^{G_\varPsi} Y_2^{G_\varPsi} \mid
X_1,X_2,Y_1,Y_2 \bigr) \bigr).\label{eq:Wcondtioning}
\end{align}
    Using $\Ee(G_\varPhi(x)G_\varPhi(x')) = \varPhi(x) + \varPhi(x') - \varPhi(x-x') =: \varphi(x,x')$ yields
\begin{align*}
\notag &\Ee \bigl(X_1^{G_\varPhi} X_2^{G_\varPhi}
\mid X_1,X_2,Y_1,Y_2 \bigr)
\\
&= \varphi(X_1,X_2) - \Ee\bigl(\varphi(X_1,X_2)
\mid X_1\bigr)- \Ee\bigl(\varphi(X_1,X_2)
\mid X_2\bigr) + \Ee\varphi(X_1,X_2)
\\
& = - \varPhi(X_1-X_2)+\Ee\bigl(\varPhi(X_1-X_2)
\mid X_1\bigr) + \Ee\bigl(\varPhi(X_1-X_2)
\mid X_2\bigr)
\\
&\qquad\mbox{} - \Ee\varPhi(X_1-X_2),
\end{align*}
    where the second equality is due to cancellations. An analogous calculation for\break $\Ee(Y_1^{G_\varPsi} Y_2^{G_\varPsi} \mid X_1,X_2,Y_1,Y_2)$ turns \eqref{eq:Wcondtioning} into \eqref{eq:Vdoublecentered}.

    For \eqref{eq:Wcondtioning} we have to make sure that  $\Ee (|X_1^{G_\varPhi} X_2^{G_\varPhi} Y_1^{G_\varPsi} Y_2^{G_\varPsi}| ) <\infty$. This follows from
    \begin{align*}
        &\Ee\left(|X_1^{G_\Phi} X_2^{G_\Phi} Y_1^{G_\Psi} Y_2^{G_\Psi}|\right)\\
        &= \Ee\left(
            \Ee\left[|X_1^{G_\Phi} X_2^{G_\Phi}| \;\Big|\;  X_1,X_2,Y_1,Y_2\right]
            \Ee\left[|Y_1^{G_\Psi} Y_2^{G_\Psi}| \;\Big|\;  X_1,X_2,Y_1,Y_2\right]
            \right)\\
        &\leq \Ee\bigg(
            \sqrt{\Ee\left[|X_1^{G_\Phi}|^2 \;\Big|\; X_1,X_2,Y_1,Y_2\right]
            \Ee\left[|X_2^{G_\Phi}|^2 \;\Big|\; X_1,X_2,Y_1,Y_2\right]}\\
        &\qquad\quad\mbox{}\times
            \sqrt{\Ee\left[|Y_1^{G_\Psi}|^2 \;\Big|\; X_1,X_2,Y_1,Y_2\right]
            \Ee\left[|Y_2^{G_\Psi}|^2 \;\Big|\; X_1,X_2,Y_1,Y_2\right]}\bigg)\\
        &= \Ee\bigg(
            \sqrt{\Ee\left[|X_1^{G_\Phi}|^2 \;\Big|\; X_1,Y_1\right]
            \Ee\left[|X_2^{G_\Phi}|^2 \;\Big|\; X_2,Y_2\right]}\\
        &\qquad\quad\mbox{}\times
            \sqrt{\Ee\left[|Y_1^{G_\Psi}|^2 \;\Big|\; X_1,Y_1\right]
            \Ee\left[|Y_2^{G_\Psi}|^2 \;\Big|\; X_2,Y_2\right]}\bigg)\\
        &= \Ee\bigg(
            \sqrt{\Ee\left[|X_1^{G_\Phi}|^2 \;\Big|\; X_1,Y_1\right]}\bigg)^2
            \Ee\bigg(\sqrt{
            \Ee\left[|Y_1^{G_\Psi}|^2 \;\Big|\; X_1,Y_1\right]}\bigg)^2\\
        &\leq \Ee\left(|X_1^{G_\Phi}|^2\right) \Ee\left(|Y_1^{G_\Psi}|^2\right).
    \end{align*}
    In this calculation we use first the independence of $G_\Phi$ and $G_\Psi$, the
    conditional Cauchy--Schwarz inequality and the fact that the random variables $(X_1,Y_1)$ and\break $(X_2,Y_2)$ are i.i.d. In the final estimate we use again the Cauchy--Schwarz inequality.
    In order to see that the right-hand side is finite, we note that \eqref{gauss-e02} and \eqref{gauss-e04} yield
    \begin{align*}
        \Ee\left(|X^{G_\Phi}|^2\right)
        &= \Ee\left(|G_\Phi(X) - \Ee(G_\Phi(X) \mid G_\Phi)|^2\right)\\
        &\leq 2\Ee\left(|G_\Phi(X)|^2\right)
        = 4  \Ee\Phi(X) <  \infty.
    \end{align*}
    A similar estimate for $Y$ completes the proof.
\end{proof}

\section{Conclusion}

We have shown that the concept of distance covariance introduced by Sz\'ekely
et al. \cite{SzekRizzBaki2007} can be embedded into a more general framework
based on L\'evy measures, cf. Section \ref{sec:gdc}. In this generalized
setting\querymark{Q4} the key results for statistical applications are: the convergence of the
estimators and the fact that also the limit distribution of the (scaled) estimators is known, cf. Section \ref{sec:estimation}. Moreover~-- for applications this
is of major importance~-- the estimators have the numerically efficient
representation \eqref{est-e06}.

The results allow the use of generalized distance covariance in the tests for
independence developed for distance covariance, e.g. tests based on a general
Gaussian quadratic form estimate or resampling tests. The test statistic is
the function $T:= \frac{N\cdot\VN^2}{a_Nb_N}$ discussed in Corollary
\ref{est-25}. Using the quadratic form estimate (see \cite{SzekRizzBaki2007}
for details) its p-value can be estimated by $1-F(T)$ where $F$ is the
distribution function of the Chi-squared distribution with 1 degree of
freedom. This test and resampling tests are studied in detail in \cite{part2}
and \cite{Boet2017}, respectively. In addition, these papers contain
illustrating examples which show that the new flexibility provided by the
choice of the L\'evy measures (equivalently: by the choice of the continuous
negative definite function) can be used to improve the power of these tests.
Moreover, new test procedures using distance covariance and its
generalizations are developed in \cite{BersBoet2018}.

Finally, the presented results are also the foundation for a new approach to
testing and measuring
multivariate dependence, i.e. the mutual (in)dependence of
an arbitrary number of random vectors. This approach is developed in
\cite{part2} accompanied by extensive examples and further applications in
\cite{Boet2017}.  All functions required for the use of generalized distance
covariance in applications are implemented in the R package
\texttt{multivariance} \cite{Boett2017R-1.0.5}.

\begin{acknowledgement}
We are grateful to Ulrich Brehm (TU Dresden) for insightful discussions on
(elementary) symmetric polynomials and to Georg Berschneider (TU Dresden) who
read and commented on the whole text. We would also like to thank Gabor J.
Sz\'ekely (NSF) for advice on the current literature.
\end{acknowledgement}

\begin{funding}
Financial support for Martin Keller-Ressel by the
\gsponsor[id=GS1,sponsor-id=501100001659]{German Research Foundation}\break (DFG) under grant
\gnumber[refid=GS1]{ZUK~64} and \gnumber[refid=GS1]{KE~1736/1-1} is gratefully acknowledged.
\end{funding}






\end{document}